\numberwithin{equation}{section}
\theoremstyle{plain}
\newtheorem{thm}{Theorem}[section]
\newtheorem{lem}[thm]{Lemma}
\newtheorem{cor}[thm]{Corollary}
\newtheorem{prop}[thm]{Proposition}
\newtheorem*{thm*}{Theorem}
\newtheorem*{prop*}{Proposition}
\theoremstyle{definition}
\newtheorem{rem}[thm]{Remark}
\newtheorem{?}[thm]{Problem}
\newtheorem{exa}[thm]{Example}
\theoremstyle{definition}
\newtheorem*{nt*}{Notation}
\newcommand{\im}{{\rm Im}\,}
\newcommand{\re}{{\rm Re}\,}
\newcommand{\inner}[2]{\left\langle {#1}, {#2} \right\rangle}
\newcommand{\calc}{\mathcal C}
\newcommand {\C} {\mathbb C}
\newcommand {\R} {\mathbb R}
\newcommand {\Z} {\mathbb Z}
\newcommand {\spc} {\mathbb{B}_\ell(\mathbb{C}_{\mathbf{Re}>0})}
\newcommand {\hlp} {\mathbb C_{\mathbf{Re}>0}}
\newcommand {\pa} {\partial}
\newcommand {\be} {\beta}
\newcommand {\bfa} {\mathbf a}
\newcommand {\bfb} {\mathbf b}
\newcommand {\bfc} {\mathbf c}
\newcommand {\bfe} {\mathbf E}
\newcommand {\bfh} {\mathbf h}
\newcommand {\calo} {\mathcal O}
\newcommand {\calj} {\mathcal J}
\newcommand {\lb} {\mathbf{Lb}^2(\mathbb{R}_{\geq 0})}
\newcommand {\lp} {\mathcal L}
\begin{document}

\title[]{Complex symmetric weighted composition operators on Bergman spaces and Lebesgue spaces}

\author{Pham Viet Hai* and Osmar R. Severiano}%
\address[P. V. Hai]{...}
%\address[P. V. Hai]{Faculty of Mathematics, Mechanics and Informatics, University of Science, Vietnam National University, Hanoi, Vietnam.}%
%\address[P. V. Hai]{Thang Long Institute of Mathematics and Applied Sciences, Thang Long University, Hanoi, Vietnam.}
%\email{phamviethai@hus.edu.vn}%%
\email{phamviethai86@gmail.com}%%

\address[O. R. Severiano]{Programa Associado de P\'{o}s Gradua\c{c}\~{a}o em Matem\'{a}tica  Universidade Federal da Para\'{i}­ba/Universidade Federal de Campina Grande, Jo\~{a}o Pessoa, Brazil.}
\email{osmar.rrseveriano@gmail.com}%%

 \subjclass[2010]{47B33, 47B32, 47B38, 47B15, 30H20}

 \keywords{weighted composition operator, complex symmetry, Bergman space}

\begin{abstract}
In the paper, we investigate weighted composition operators on Bergman spaces of a half-plane. We characterize weighted composition operators which are hermitian and those which are complex symmetric with respect to a family of conjugations. As it turns out, weighted composition operators enhanced by a symmetry must be bounded. Hermitian, and unitary weighted composition operators are proven to be complex symmetric with respect to an adapted and highly relevant conjugation. We classify which the linear fractional functions give rise to the complex symmetry of bounded composition operators. We end the paper with a natural link to complex symmetry in Lebesgue space.
\end{abstract}

\maketitle

\section{Introduction}
It was proven long by Banach and Stone \cite{zbMATH00467266} that linear, surjective isometries between spaces of continuous functions on compact Hausdorff spaces are of type of a weighted composition operator. The Banach-Stone theorem has been gradually generalized and extended in many different contexts. Among the existing works, we can mention \cite{zbMATH03158247, zbMATH03213975} which show that the only surjective isometries of Hardy spaces $H^p$ with exponent $p\geq 1$ are precisely of this type, except the case $p=2$. Not only does the theorem give rise to inspiration and motivation to study isometries, but also sets the stage for the study of weighted composition operators. %Thanks to contributions of many different schools, certain progress was made in the study of such operators (see for instance \cite{zbMATH00918588, zbMATH00473375, zbMATH00467266}).

The paper works with very general weighted composition operators. Specifically, we develop the research in full generality, meaning that the corresponding operators are not assumed to be bounded. Note that unbounded operators are understood as they are not necessarily bounded. We pause a while to get closer the relevant definitions. For analytic functions $f:U\to\C$ and $g:U\to U$, we define the weighted composition expression $\bfe_{f,g}$ by
\begin{gather*}
    \bfe_{f,g}h=f\cdot h\circ g.
\end{gather*}
If $\calo$ is a Banach space of analytic functions over $U$, then the \emph{maximal weighted composition operator} is defined as
\begin{gather*}
    \text{dom}(W_{f,g,\max})=\{h(\cdot)\in\calo:\bfe_{f,g}h(\cdot)\in\calo\},\\
    W_{f,g,\max}h(\cdot)=\bfe_{f,g}h(\cdot),\quad\text{for $h\in\text{dom}(W_{f,g,\max})$}.
\end{gather*}
We relax the domain assumption to define the \emph{nonmaximal weighted composition operator}
\begin{gather*}
    W_{f,g}\preceq W_{f,g,\max},
\end{gather*}
where the symbol $A\preceq B$ means that $\text{dom}(A)\subseteq \text{dom}(B)$  and $Ax=Bx$ for $x\in\text{dom}(A)$, and the term "nonmaximal" is understood as "not necessarily maximal". The operator $W_{f,g}$ is called \emph{bounded} on $\calo$ if the domain $\text{dom}(W_{f,g})=\calo$ and there exists a constant $L>0$ such that $\|W_{f,g}h\|\leq L\|h\|$ for every $h(\cdot)\in\calo$. To simplify terms, when the weight function $f(\cdot)$ is identically $1$, we call as \emph{composition operator}.

The most of research on composition operators is done on Hardy spaces $H^p$ of analytic functions over the unit disk or unit ball. The books \cite{zbMATH00918588, zbMATH00473375, zbMATH00467266} are excellent references. %Due to the wide range of the topic, we cannot give a comprehensive review of the literature and will only review some relevant results. 
One is the fact that every analytic self-map of the unit disk gives rise to a bounded composition operator on Hardy spaces, which is a well-known consequence of the Littlewood Subordination Principle (see \cite{zbMATH00918588}). In the higher dimensional case, there are many examples (see \cite{zbMATH04000348}) which show that composition operators need not be bounded. The classification of normal composition operators can be carried out easily with the conclusion that the only transformation $g(z)=\bfa z$ induces normality (see \cite{zbMATH00918588}). The situation of weighted composition operators is much more delicate to delineate. In \cite{zbMATH05815825}, Cowen and Eungil gave a characterization of hermitian weighted composition operators. Bourdon and Narayan \cite{zbMATH05687699} characterized unitary weighted composition operators and then used this characterization to describe normal weighted composition operators in the case when the function $g(\cdot)$ fixes a point in the unit disk. This case also was considered in \cite{zbMATH06479106} for the study of cohyponormal weighted composition operators. Later, Le \cite{zbMATH06074411} extended the results obtained in \cite{zbMATH05687699, zbMATH05815825} to higher dimensional Hardy spaces. Although Hardy spaces of the disk and half-plane are isomorphic, their composition operators work very differently; for example, in the half-plane not all composition operators are bounded (see \cite{zbMATH05907249}). 

The situation on Fock spaces is studied relatively completely. Characterizations of boundedness, compactness, isometry, normality were produced in \cite{zbMATH06324457, zbMATH06803895}. In \cite{zbMATH07216720}, the author considered unbounded weighted composition operators on Fock space. Extending the corresponding result of \cite{zbMATH06324457}, the author obtained a characterization of unbounded weighted composition operators which are normal and also those which are cohyponormal.

Recently, researchers are interested in the problem of classifying complex symmetric weighted composition operators. Recall that an unbounded, linear operator $B$ on a complex, separable Hilbert space is called \emph{complex symmetric} if there exists an isometric involution $\calc$ (often called as \emph{conjugation}) such that $B=\calc B^*\calc$ (see \cite{zbMATH02237890, zbMATH05148120, zbMATH06349831}). Garcia and Hammond \cite{zbMATH06454968} and Jung et al. \cite{zbMATH06320823} conducted research independently in Hardy spaces $H^p$. One of their results is to characterize a complex symmetric, bounded weighted composition operator, when conjugation is of the concrete form
\begin{gather}\label{con-J}
    \calj h(z)=\overline{h(\overline{z})}.
\end{gather}
Since then the problem has been studied in various function spaces. We refer the reader to \cite{zbMATH06561994} for Hardy spaces of higher dimensions and to \cite{zbMATH06882577} for Lebesgue spaces of measurable functions. For Fock space, the reader can consult the paper \cite{zbMATH06487337}, in which the author and Khoi found out a three parameter family of canonical isometric involutions, containing $\calj$ as a very particular case. Noor and the second author \cite{zbMATH07186914} studied the problem in Hardy space of the open right half-plane. Later, Han and Wang \cite{zbMATH07334460} characterized composition operators that are complex symmetric with respect to conjugation \eqref{con-J}. We emphasize that the paper \cite{zbMATH07334460} deals only with \emph{unweighted} composition operators and the case of \emph{weighted} composition operators is not considered. Recently, there is the work \cite{zbMATH07190521} done in Newton space. As it turns out, the function $g(\cdot)$ gives rise to a complex symmetric, bounded composition operator on Newton space is quite restrictive; including the identity function or zero function. In contrast to the bounded case, the study of unbounded weighted composition operators is at a rather early stage of development. In \cite{zbMATH07216720}, the first author began to study the symmetric properties of unbounded weighted composition operators, comprising real symmetry, complex symmetry, or normality.

%Recall that for $p\geq 1$, the Hardy space $H^p$ consists of all analytic functions of $L^p$-Lebesgue spaces on the complex circle $\D$ whose Fourier coefficients vanish for negative indices.
It is the goal of the paper to describe the symmetric properties of weighted composition operators acting on Bergman spaces of the open right half-plane. The rest of the paper is organized as follows. Section \ref{sec-prepare} makes some preparation for the proofs of main results, by giving basic definitions, proving a few observations. Section \ref{sec-her} characterizes weighted composition operators which are hermitian and Sections \ref{sec-Ca-self}-\ref{sec-cs-final} describe those which are complex symmetric. We obtain the interesting fact that a weighted composition operator enhanced by a symmetry must be bounded. In Section \ref{sec-cor}, hermitian, and unitary weighted composition operators are proven to be complex symmetric with respect to an adapted and highly relevant conjugation. In Section \ref{sec-com-op}, we classify which the linear fractional functions give rise to the complex symmetry of bounded composition operators. We end the paper with Section \ref{sec-link-Lebes}, in which a natural link to complex symmetry in Lebesgue spaces is established. Section \ref{sec-link-Lebes} is motivated by the Paley-Wiener theorem, which states that Bergman space of the open right half-plane is isometrically isomorphic under the Laplace transform to Lebesgue space.

\section{Preparation}\label{sec-prepare}
Let $\hlp$ be the open right half-plane $\{z\in\C:\re z>0\}$. Throughout the paper, we always assume that $\ell\in\Z_{\geq 0}$. This assumption is essential, as in general, for any $x,y\in\C$ and $\ell>0$, $(xy)^\ell\ne x^\ell y^\ell$ while the equality holds when $\ell\in\Z_{\geq 0}$. For $\ell\in\Z_{\geq 0}$, the weighted Bergman space $\spc$ consists of those analytic functions $h:\hlp\to\C$ for which
\begin{gather*}\label{10}
\|h\|=\left(\dfrac{1}{\pi}
\int\limits_{-\infty}^\infty
\int\limits_0^\infty x^\ell|h(x+iy)|^2\,dxdy\right)^{1/2}<\infty.
\end{gather*}
For each $z\in \hlp,$ the function
\begin{gather*}
K_z(x)=\dfrac{2^\ell(1+\ell)}{(x+\overline{z})^{\ell+2}},\quad x\in \hlp
\end{gather*}
is called the \emph{reproducing kernel} for $\spc$ at $z.$ The kernel functions satisfy the fundamental property $ h(z)=\inner{h}{K_z}$ for each $h\in \spc.$

The next result will be used frequently, as it shows how the adjoint of weighted composition operators act on kernel functions:
\begin{prop}\label{W*Kz-prop}
	For every $z\in\hlp$, we always have $K_z\in\text{dom}(W_{f,g}^*)$ and moreover
	\begin{gather*}
	W_{f,g}^*K_z=\overline{f(z)}K_{g(z)}.
	\end{gather*}
\end{prop}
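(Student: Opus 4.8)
The plan is to verify the claimed identity by testing both sides against an arbitrary element $h\in\text{dom}(W_{f,g})$ and invoking the reproducing property. First I would recall that an element $u$ lies in $\text{dom}(W_{f,g}^*)$ precisely when the linear functional $h\mapsto\inner{W_{f,g}h}{u}$ is bounded on $\text{dom}(W_{f,g})$, and in that case $W_{f,g}^*u$ is the (unique, by density of the domain) vector representing this functional. So I would fix $z\in\hlp$, take $u=K_z$, and for $h\in\text{dom}(W_{f,g})$ compute
\begin{gather*}
\inner{W_{f,g}h}{K_z}=\inner{\bfe_{f,g}h}{K_z}=\bigl(\bfe_{f,g}h\bigr)(z)=f(z)\,h(g(z)),
\end{gather*}
using the reproducing property $\inner{\varphi}{K_z}=\varphi(z)$ applied to $\varphi=\bfe_{f,g}h=f\cdot h\circ g$. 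Then again by the reproducing property, $h(g(z))=\inner{h}{K_{g(z)}}$, which is legitimate since $g(z)\in\hlp$, so
\begin{gather*}
\inner{W_{f,g}h}{K_z}=f(z)\inner{h}{K_{g(z)}}=\inner{h}{\overline{f(z)}\,K_{g(z)}}.
\end{gather*}

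This last expression exhibits $h\mapsto\inner{W_{f,g}h}{K_z}$ as $h\mapsto\inner{h}{\overline{f(z)}K_{g(z)}}$, which is manifestly a bounded linear functional on all of $\spc$ (with norm at most $|f(z)|\,\|K_{g(z)}\|$), hence a fortiori bounded on the subspace $\text{dom}(W_{f,g})$. By the definition of the adjoint this forces $K_z\in\text{dom}(W_{f,g}^*)$ and $W_{f,g}^*K_z=\overline{f(z)}K_{g(z)}$, which is exactly the assertion. One should note that $\text{dom}(W_{f,g})$ contains each $K_w$ (indeed $\bfe_{f,g}K_w$ is again analytic, and... ) — actually more carefully, one only needs $\text{dom}(W_{f,g})$ to be dense for the representing vector to be unique; the containment $K_z\in\text{dom}(W_{f,g}^*)$ and the formula follow from the displayed computation alone, valid for every $h$ in the domain.

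I do not anticipate a serious obstacle here; the only point requiring a little care is the justification that $\bfe_{f,g}h$ actually lies in $\spc$ so that the reproducing property may be applied to it — but this is built into the definition of $\text{dom}(W_{f,g})\subseteq\text{dom}(W_{f,g,\max})$, since membership in the maximal domain is exactly the condition $\bfe_{f,g}h\in\spc$. A second minor point is that the adjoint of a densely defined (possibly unbounded) operator is well-defined, which requires $\text{dom}(W_{f,g})$ to be dense in $\spc$; this is a standing assumption for the adjoint $W_{f,g}^*$ to make sense, and under it the argument goes through verbatim. Thus the proof is essentially a two-line application of the reproducing kernel property combined with the defining property of the Hilbert space adjoint.
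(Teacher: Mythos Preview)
Your proof is correct and is precisely the standard reproducing-kernel argument the paper has in mind; the paper itself merely states that ``the proof makes use of the fundamental property of kernel functions and it is left to the reader.'' Your write-up fills in exactly those details, including the care about $\bfe_{f,g}h\in\spc$ and the density needed for the adjoint to be well defined.
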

\begin{proof}
	The proof makes use of the fundamental property of kernel functions and it is left to the reader.
\end{proof}
%Throughout the paper, we always assume that $p\in\Z_{\geq 0}$. This assumption is essential, as in general, for any $x,y\in\C$ and $\ell>0$, $(xy)^\ell\ne x^\ell y^\ell$ while the equality holds when $\ell\in\Z_{\geq 0}$. For $z\in\hlp$, we define the kernel function
%\begin{gather*}
%    K_z(x)=\dfrac{2^\ell(1+\ell)}{(x+\overline{z})^{\ell+2}},
%\end{gather*}
%which satisfies
%\begin{gather*}
 %   h(z)=\inner{h}{K_z},\quad\forall h\in\spc.
%\end{gather*}

%The action of the adjoint of a weighted composition operator on kernel functions is given by the following rule.
%\begin{prop}\label{W*Kz-prop}
%For every $u\in\hlp$, we always have $K_u\in\text{dom}(W_{f,g}^*)$ and moreover
%\begin{gather*}
 %   W_{f,g}^*K_u=\overline{f(u)}K_{g(u)}.
%\end{gather*}
%\end{prop}
%\begin{proof}
%The proof makes use of the property of kernel functions and it is left to the reader.
%\end{proof}

We establish a condition for a linear fractional function to be a self-mapping of $\hlp$.
\begin{lem}\label{lem-self-map}
Let $\phi(\cdot)$ be the function given by
\begin{gather}\label{eq-bdd-special}
    \phi(z)=-p-\dfrac{q}{z-u},
\end{gather}
where $p,q,u$ are complex constants. Then $\phi$ is a self-mapping of $\hlp$ if and only if
\begin{gather}\label{cond-self-mapping}
    \begin{cases}
    \text{either $\re p=\im q=0,\re q<0,\re u\leq 0$},\\
    \\
    \text{or $\re p<0\leq-\re u+\dfrac{\re q+|q|}{2\re p}$}.
    \end{cases}
\end{gather}
\end{lem}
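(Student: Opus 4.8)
The plan is to translate the pole of $\phi$ to the origin and then analyse directly the sign of $\re\phi(z)$ on $\hlp$. First I would set aside the trivial case $q=0$ (where $\phi\equiv-p$) and assume $q\ne0$. If $\re u>0$ then $u\in\hlp$ is a pole of $\phi$, so $\phi$ fails to be a self-map; and since each alternative in \eqref{cond-self-mapping} forces $\re u\le0$ (in the second because $\re q+|q|\ge0$ and $\re p<0$ make $\frac{\re q+|q|}{2\re p}\le0$), it suffices to deal with $\re u\le0$. Writing $w=z-u$, the assignment $z\mapsto w$ carries $\hlp$ onto the half-plane $\{w:\re w>-\re u\}$; as $-\re u\ge0$, every such $w$ has $\re w>0$, and for each fixed value of $\re w$ the imaginary part of $w$ still sweeps all of $\R$. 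Since $\re\phi(z)=-\re p-\re(q/w)$, the map $\phi$ is a self-map of $\hlp$ exactly when
\begin{gather*}
\re(q/w)<-\re p\qquad\text{for every }w\text{ with }\re w>-\re u .
\end{gather*}

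The computational core is an elementary remark about the image of a vertical line: for fixed $s=\re w>0$, as $\im w$ ranges over $\R$ the point $q/w$ traces the circle with centre $q/(2s)$ and radius $|q|/(2s)$, a circle passing through the origin, the latter being attained only in the limit $|\im w|\to\infty$. Hence
\begin{gather*}
\sup\bigl\{\re(q/w):\re w=s\bigr\}=\frac{\re q+|q|}{2s},
\end{gather*}
and this value is attained at a finite $w$ unless $q$ is a non-positive real number, in which case the supremum equals $0$ and is merely approached. Feeding this into the criterion above, $\phi$ is a self-map precisely when $\frac{\re q+|q|}{2s}<-\re p$ for all $s>-\re u$, with the understanding that in the exceptional case $\re q+|q|=0$ (i.e.\ $q$ a non-positive real) the requirement relaxes to $0\le-\re p$, since then $\re(q/w)=\re q\cdot\re w/|w|^2$ stays strictly below its supremum $0$.

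Finally I would read off \eqref{cond-self-mapping} by splitting on the sign of $\re p$, using that $s\mapsto\frac{\re q+|q|}{2s}$ is non-negative and non-increasing on $(-\re u,\infty)$. If $\re p>0$ the inequality cannot hold (its left side is $\ge0>-\re p$, and in the exceptional case $\re(q/w)$ still gets arbitrarily close to $0$), so no self-map exists. If $\re p=0$ one needs the supremum $\frac{\re q+|q|}{2s}$ to be $\le0$, forcing $\re q+|q|=0$; together with $q\ne0$ this is $\im q=0$, $\re q<0$, and then any $\re u\le0$ works because $\re(q/w)<0=-\re p$ — this is the first alternative. If $\re p<0$, the supremum of $\frac{\re q+|q|}{2s}$ over $s>-\re u$ equals $\frac{\re q+|q|}{2(-\re u)}$ (to be read as $0$ when $\re q+|q|=0$, and as $+\infty$, hence an impossibility, when $\re u=0<\re q+|q|$), so the condition becomes $\frac{\re q+|q|}{2(-\re u)}\le-\re p$, which upon clearing the negative factor $\re p$ is exactly $0\le-\re u+\frac{\re q+|q|}{2\re p}$ — the second alternative. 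The step I expect to require the most care is this last bookkeeping at the boundary: verifying that the non-strict inequalities in \eqref{cond-self-mapping} are the sharp ones — in particular, that equality in the second alternative still gives $\re\phi(z)>0$ for every $z$, because then $\frac{\re q+|q|}{2s}<\frac{\re q+|q|}{2(-\re u)}=-\re p$ strictly for $s>-\re u$ — and that the non-attained supremum in the exceptional case is handled consistently throughout.
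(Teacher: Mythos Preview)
Your argument is correct and follows a genuinely different route from the paper's. The paper works entirely in real Cartesian coordinates: writing $z=z_1+iz_2$, $p=p_1+ip_2$, etc., it reduces the self-map condition to the quadratic inequality
\[
p_1\bigl[(z_1-u_1)^2+(z_2-u_2)^2\bigr]+q_1(z_1-u_1)+q_2(z_2-u_2)<0\qquad(z_1>0,\ z_2\in\R),
\]
first extracts $p_1\le0$ by letting $z_2\to\infty$, and then treats the cases $p_1=0$ and $p_1<0$ separately by completing the square in $z_1$ and $z_2$; the quantity $\re q+|q|=q_1+\sqrt{q_1^2+q_2^2}$ emerges only at the end, from the discriminant. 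Your approach instead uses the M\"obius-geometric fact that the vertical line $\re w=s$ is carried by $w\mapsto q/w$ to the circle through the origin with centre $q/(2s)$ and radius $|q|/(2s)$, which hands you the closed-form supremum $(\re q+|q|)/(2s)$ at once and reduces the whole question to the monotone one-variable function $s\mapsto(\re q+|q|)/(2s)$ on $(-\re u,\infty)$. The paper's method is more elementary in that it requires nothing beyond high-school algebra, at the price of heavier bookkeeping; yours is shorter and makes the structural role of $\re q+|q|$ visible from the start. Your treatment of the boundary issues --- the non-attained supremum over $s>-\re u$ and the exceptional case $q\in\R_{<0}$ where the supremum on each vertical line is itself not attained --- is exactly what is needed to get the non-strict inequalities in \eqref{cond-self-mapping} right, and is handled more explicitly than in the paper.
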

\begin{proof}
For any $w\in\C$, we denote $w_1=\re w$ and $w_2=\im w$. Hence
\begin{gather*}
    \phi(z)=-p_1-ip_2-\dfrac{(q_1+iq_2)[z_1-u_1-i(z_2-u_2)]}{(z_1-u_1)^2+(z_2-u_2)^2},
\end{gather*}
from which we obtain
\begin{gather*}
\re\phi(z)
=-p_1-\dfrac{q_1(z_1-u_1)+q_2(z_2-u_2)}{(z_1-u_1)^2+(z_2-u_2)^2}.
\end{gather*}
Thus, $\phi(\cdot)$ is a self-mapping of $\hlp$ if and only if
\begin{gather}\label{ineq-self-map-1}
p_1[(z_1-u_1)^2+(z_2-u_2)^2]+q_1(z_1-u_1)+q_2(z_2-u_2)<0,\quad\forall z_1>0,z_2\in\R.
\end{gather}
This inequality gives $p_1\leq 0.$ Indeed,
\begin{align*}
p_1=\lim_{z_2\longrightarrow \infty}p_1\left[ \frac{(z_1-u_1)^2}{(z_2-u_2)^2}+\frac{(z_1-u_1)^2}{(z_2-u_2)^2}\right]\leq- \lim_{z_2\longrightarrow \infty} \left[ q_1\frac{z_1-u_1}{(z_2-u_2)^2}+q_2\frac{z_2-u_2}{(z_2-u_2)^2}\right] =0
\end{align*}
%We must have $p_1\leq 0$; otherwise, by letting $z_2\to\infty$ the inequality above yields a contradiction.
We now study the cases, $p_1=0$ and $p_1<0,$  separately.

\textbf{Case 1:} $p_1=0.$ Then \eqref{ineq-self-map-1} occurs if and only if $q_2=0\geq u_1$ and $q_1<0$.

Suppose that \eqref{ineq-self-map-1} is holds, then 
\begin{align*}q_1(z_1-u_1)<-q_2(z_2-u_2),\quad \forall z_1>0,z_2\in\R.
\end{align*}
For $z_2=u_2,$ we obtain $q_1(z_1-u_1)<0$ for all $z_1>0,$ which implies  $q_1<0$ and $u_1\leq 0.$ From these conditions, we obtain
\begin{align*}
0=\sup\left\lbrace q_1(z_1-u_1):z_1>0\right\rbrace \leq -q_2(z_2-u_2), \quad z_2\in \R.
\end{align*}
Since $z_2\in \R,$ we can find $z_2, z_2'\in \R$ such that $z_2-u_2>0$ and $z_2'-u_2<0.$ Hence
\begin{align*}
0\leq - \left( \frac{z_2-u_2}{z_2-u_2}\right)  q_2=-q_2 \quad \text{and} \quad
0\geq - \left( \frac{z_2'-u_2}{z_2'-u_2}\right)  q_2=-q_2
\end{align*}
whose solution is $q_2=0.$ The converse follows from a quick computation.

\textbf{Case 2:} $p_1<0.$ For this case, \eqref{ineq-self-map-1} turn into
\begin{gather}\label{15}
(z_1-u_1)^2+\dfrac{q_1}{p_1}(z_1-u_1)+(z_2-u_2)^2+\dfrac{q_2}{p_1}(z_2-u_2)>0,\quad\forall z_1>0,z_2\in\R.
\end{gather}
Since the minimum of the function $z\in \R\mapsto z^2+\frac{q_2}{p_1}z$ is $-\frac{q_2^2}{4p_1^2},$ it follows from \eqref{15} that
%which implies, after taking the infimum by $z_2$ over $\R$, that
\begin{gather*}
(z_1-u_1)^2+\dfrac{q_1}{p_1}(z_1-u_1)\geq\dfrac{q_2^2}{4p_1^2},\quad\forall z_1>0,
\end{gather*}
and hence 
\begin{gather*}
    \left|z_1-u_1+\dfrac{q_1}{2p_1}\right|\geq-\dfrac{\sqrt{q_1^2+q_2^2}}{2p_1},\quad\forall z_1>0.
\end{gather*}
Then, the inequality above occurs if and only if the following inequality holds
\begin{gather*}
u_1\leq\dfrac{q_1+\sqrt{q_1^2+q_2^2}}{2p_1}.
\end{gather*}
The proof is complete.
\end{proof}
Before proceeding further, we need the auxiliary result, which says that maximal weighted composition operators are always closed.
%Using the property of kernel functions, we can prove that a maximal weighted composition operator is always closed.
\begin{prop}\label{prop-closed-op}
	If $f:\hlp\to\C$ and $g:\hlp\to\hlp$ are analytic functions, then the maximal operator $W_{f,g,\max}$ is closed.
\end{prop}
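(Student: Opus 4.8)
The plan is to verify closedness straight from the definition. Suppose $(h_n)\subseteq\text{dom}(W_{f,g,\max})$ with $h_n\to h$ in $\spc$ and $W_{f,g,\max}h_n=\bfe_{f,g}h_n\to k$ in $\spc$; I must show that $h\in\text{dom}(W_{f,g,\max})$ and $W_{f,g,\max}h=k$. The engine driving the argument is the familiar principle that norm convergence in a reproducing kernel Hilbert space forces pointwise convergence, so the idea is to pass to pointwise limits and identify $\bfe_{f,g}h$ with $k$.

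First I would record the elementary estimate that for every $w\in\hlp$ and every $u\in\spc$, Cauchy--Schwarz gives $|u(w)|=|\inner{u}{K_w}|\leq\norm{K_w}\,\norm{u}$. Hence $\norm{u_n-u}\to 0$ implies $u_n(w)\to u(w)$ for each fixed $w\in\hlp$. Applying this twice to the hypotheses yields $h_n(w)\to h(w)$ and $(\bfe_{f,g}h_n)(w)\to k(w)$ for every $w\in\hlp$.

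Next, fix $z\in\hlp$. Since $g$ is a self-map of $\hlp$, the point $g(z)$ again lies in $\hlp$, so the previous step gives $h_n(g(z))\to h(g(z))$; multiplying by the fixed, finite scalar $f(z)$ (here $f$ is analytic, hence finite-valued, on $\hlp$) we get $(\bfe_{f,g}h_n)(z)=f(z)h_n(g(z))\to f(z)h(g(z))=(\bfe_{f,g}h)(z)$. Comparing with $(\bfe_{f,g}h_n)(z)\to k(z)$ and invoking uniqueness of limits, $(\bfe_{f,g}h)(z)=k(z)$. As $z\in\hlp$ was arbitrary, $\bfe_{f,g}h=k\in\spc$, which by the very definition of the maximal operator means $h\in\text{dom}(W_{f,g,\max})$ and $W_{f,g,\max}h=k$. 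Therefore $W_{f,g,\max}$ is closed.

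As for the main obstacle, there is essentially none of substance: the only point requiring care is that the argument evaluates functions at arbitrary points of $\hlp$, in particular at $g(z)$, which is legitimate precisely because $g$ maps $\hlp$ into itself and point evaluation at each such point is continuous by the reproducing-kernel estimate. Everything else is bookkeeping with limits.
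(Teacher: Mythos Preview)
Your argument is correct: norm convergence in $\spc$ implies pointwise convergence via the reproducing-kernel estimate, and from there the identification $\bfe_{f,g}h=k$ follows exactly as you wrote. The paper itself states this proposition without proof, so there is no argument to compare against; your proof is the standard one and would be the expected justification.
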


Now we focus on boundedness of weighted composition operators induced by linear fractional functions.
\begin{lem}\label{lem-bdd-psi-phi}
Let $\ell\in\Z_{\geq 0}$. If $\phi(\cdot)$ is the self-mapping of $\hlp$ given by \eqref{eq-bdd-special}, where coefficients verify \eqref{cond-self-mapping} and $\psi:\hlp\to\C$ is the analytic function given by
\begin{gather*}
    \psi(z)=\dfrac{1}{(z-u)^{\ell+2}}.
\end{gather*}
Then the operator $W_{\psi,\phi,\max}$ is bounded on $\spc$.
\end{lem}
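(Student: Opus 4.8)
The plan is to estimate $\|W_{\psi,\phi,\max}h\|$ directly for $h\in\spc$ by working in the $(x,y)$ coordinates and controlling the weight $|\psi(z)|^2 = |z-u|^{-2(\ell+2)}$ against the change of variables induced by $\phi$. First I would recall that $\phi(z) = -p - q/(z-u)$ is a M\"obius transformation and compute its derivative, $\phi'(z) = q/(z-u)^2$, so that $|\psi(z)|^2 = |\phi'(z)|^{\ell+2}/|q|^{\ell+2}$. This is the key algebraic identity: the weight $\psi$ is (up to a constant) a power of the derivative of $\phi$, which is exactly what one needs to make the substitution $w = \phi(z)$ turn the weighted integral for $\bfe_{\psi,\phi}h$ into a weighted integral for $h$ on $\phi(\hlp)\subseteq\hlp$.

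Concretely, I would write
\begin{gather*}
\|W_{\psi,\phi,\max}h\|^2 = \frac{1}{\pi}\int\limits_{\hlp} (\re z)^\ell\,|\psi(z)|^2\,|h(\phi(z))|^2\,dA(z),
\end{gather*}
substitute $w=\phi(z)$ with Jacobian $dA(w) = |\phi'(z)|^2\,dA(z)$, and use $|\psi(z)|^{2} = |q|^{-(\ell+2)}|\phi'(z)|^{\ell+2}$ to rewrite the integrand. After the substitution one is left with an integral of $(\re z)^\ell\,|\phi'(z)|^{\ell}\,|h(w)|^2$ against $dA(w)$ over $\phi(\hlp)$, where $z = \phi^{-1}(w)$. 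The crucial point is then the pointwise bound $(\re z)\,|\phi'(z)| \leq C\,\re\phi(z)$ for all $z\in\hlp$, i.e.\ $(\re z)\,|\phi'(z)| \le C\,(\re w)$; together with $(\re z)^\ell|\phi'(z)|^\ell \le C^\ell (\re w)^\ell$ this gives $\|W_{\psi,\phi,\max}h\|^2 \le \frac{C^\ell}{\pi}\int_{\phi(\hlp)} (\re w)^\ell |h(w)|^2\,dA(w) \le C^\ell\|h\|^2$, since $\phi(\hlp)\subseteq\hlp$. This simultaneously shows $\text{dom}(W_{\psi,\phi,\max}) = \spc$ and boundedness with $L \le C^{\ell/2}$.

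The main obstacle is establishing the pointwise inequality $(\re z)\,|\phi'(z)| \le C\,\re\phi(z)$ on $\hlp$, where $C$ depends only on $p,q,u$ and on the conditions \eqref{cond-self-mapping}. Here I would split into the two cases of Lemma~\ref{lem-self-map}. In Case~1 ($\re p = \im q = 0$, $\re q < 0$, $\re u \le 0$) one has $\re\phi(z) = -\,\frac{q_1(z_1-u_1)}{(z_1-u_1)^2+(z_2-u_2)^2}$ and $|\phi'(z)| = |q|/|z-u|^2 = -q_1/((z_1-u_1)^2+(z_2-u_2)^2)$, so the inequality reduces to $z_1 \le C(z_1-u_1)$, which holds with $C=1$ because $u_1 \le 0$. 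In Case~2 ($\re p < 0$) one uses the self-mapping inequality \eqref{ineq-self-map-1} — namely $p_1|z-u|^2 + q_1(z_1-u_1) + q_2(z_2-u_2) < 0$ — to bound $\re\phi(z) = -p_1 - \frac{q_1(z_1-u_1)+q_2(z_2-u_2)}{|z-u|^2}$ from below by $-p_1 > 0$ when $|z-u|$ is large, and to handle the region where $|z-u|$ is bounded by a compactness/continuity argument, in both regimes comparing against $(\re z)|\phi'(z)| = z_1|q|/|z-u|^2$. I expect the bounded-$|z-u|$ regime to require the most care, since there $\re z$ can still be large while $\re\phi(z)$ stays close to $-p_1$; the resolution is that $z_1/|z-u|^2 \to 0$ as $z_1\to\infty$ with $|z-u|$ forced large, so in fact the quantity $z_1|q|/|z-u|^2$ is globally bounded on $\hlp$ and one only needs $\re\phi(z) \ge -p_1 > 0$, which is immediate from \eqref{ineq-self-map-1}. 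Assembling these cases yields the uniform constant $C$ and completes the proof.
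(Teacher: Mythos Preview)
Your overall strategy --- the substitution $w=\phi(z)$ together with the identity $|\psi(z)|^2=|\phi'(z)|^{\ell+2}/|q|^{\ell+2}$, reducing matters to the pointwise bound $(\re z)\,|\phi'(z)|\le C\,\re\phi(z)$ --- is essentially the paper's approach: the paper performs the same change of variables in real coordinates and then proves exactly this inequality, phrased after the substitution as $u_1|\bfb+p|^2-q_1(\bfb_1+p_1)-q_2(\bfb_2+p_2)\le C\,\bfb_1$. Your treatment of Case~1 is correct.

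There are, however, two genuine gaps. First, you overlook $q=0$: then $\phi$ is constant, $\phi'\equiv 0$, and your key identity $|\psi|^2=|\phi'|^{\ell+2}/|q|^{\ell+2}$ is meaningless. The paper handles this case separately as a rank-one operator.

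Second, your Case~2 argument is incorrect as written. The claim ``$\re\phi(z)\ge -p_1$, which is immediate from \eqref{ineq-self-map-1}'' is false: dividing \eqref{ineq-self-map-1} by $-|z-u|^2$ yields only $\re\phi(z)>0$, not $\re\phi(z)\ge -p_1$. For instance, with $p=-1$, $q=1$, $u=-1$ (which satisfies \eqref{cond-self-mapping}) one has $\phi(z)=1-1/(z+1)$ and $\re\phi(z)\to 0$ as $z\to 0$ along the real axis, while $-p_1=1$. Similarly, the claim that $z_1|q|/|z-u|^2$ is globally bounded on $\hlp$ fails whenever $\re u=0$ (take $z_2=u_2$ and let $z_1\to 0^+$). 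The desired inequality $(\re z)|\phi'(z)|\le C\,\re\phi(z)$ is true, but it requires an honest algebraic argument: the paper obtains it by completing the square in the image variable $\bfb=\phi(z)$ and then invoking the precise numerical condition in \eqref{cond-self-mapping} to control the constant term. A compactness shortcut does not work here, since the relevant subsets of $\hlp$ are not compact and both $(\re z)|\phi'(z)|$ and $\re\phi(z)$ can tend to zero at the boundary.
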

\begin{proof}
By Proposition \ref{prop-closed-op}, it is enough to show that
\begin{gather}\label{eq-dom=spc}
    \text{dom}(W_{\psi,\phi,\max})=\spc.
\end{gather}

Let $h(\cdot)\in\spc$. Consider two cases as follows.

\textbf{Case 1:} $q=0$. This case gives
\begin{gather*}
    \bfe_{\psi,\phi}h(z)=\dfrac{1}{(z-u)^{\ell+2}}h(-p)=\dfrac{h(-p)}{2^\ell(1+\ell)}K_{-\overline{u}}(z)\\
    =\dfrac{1}{2^\ell(1+\ell)}\inner{h}{K_{-p}}K_{-\overline{u}}(z),
\end{gather*}
and hence we can estimate
\begin{gather*}
    \int\limits_{-\infty}^\infty
    \int\limits_0^\infty z_1^\ell|\bfe_{\psi,\phi}h(z_1+iz_2)|^2\,dz_1dz_2
    =\dfrac{1}{2^\ell(1+\ell)}|\inner{h}{K_{-p}}|^2\pi\|K_{-\overline{u}}\|^2\\
    \leq\pi\left(\|K_{-p}\|\cdot\|K_{-\overline{u}}\|\right)^2\|h\|^2.
\end{gather*}
The inequality above shows $h(\cdot)\in\text{dom}(W_{\psi,\phi,\max})$, which provides, as $h(\cdot)$ is arbtrary, that \eqref{eq-dom=spc} holds.

\textbf{Case 2:} $q\ne 0$. Consider
\begin{gather*}
    \int\limits_{-\infty}^\infty
    \int\limits_0^\infty z_1^\ell|\bfe_{\psi,\phi}h(z_1+iz_2)|^2\,dz_1dz_2\\
    =\int\limits_{-\infty}^\infty
    \int\limits_0^\infty\dfrac{z_1^\ell}{((z_1-u_1)^2+(z_2-u_2)^2)^{\ell+2}}
    \Bigg|h\Bigg(-p_1-\dfrac{q_1(z_1-u_1)+q_2(z_2-u_2)}{(z_1-u_1)^2+(z_2-u_2)^2}\\
    +i\left(-p_2-\dfrac{q_2(z_1-u_1)-q_1(z_2-u_2)}{(z_1-u_1)^2+(z_2-u_2)^2}\right)\Bigg)\Bigg|^2\,dz_1dz_2.
\end{gather*}
Let us do the change of variables
\begin{gather*}
    \bfb_1=-p_1-\dfrac{q_1(z_1-u_1)+q_2(z_2-u_2)}{(z_1-u_1)^2+(z_2-u_2)^2},\\
    \bfb_2=-p_2-\dfrac{q_2(z_1-u_1)-q_1(z_2-u_2)}{(z_1-u_1)^2+(z_2-u_2)^2}.
\end{gather*}
Then
\begin{gather*}
    (z_1-u_1)^2+(z_2-u_2)^2=\dfrac{q_1^2+q_2^2}{(\bfb_1+p_1)^2+(\bfb_2+p_2)^2},\\
    z_1=u_1+\dfrac{-q_1(\bfb_1+p_1)-q_2(\bfb_2+p_2)}{(\bfb_1+p_1)^2+(\bfb_2+p_2)^2},\\
    z_2=u_2+\dfrac{-q_2(\bfb_1+p_1)+q_1(\bfb_2+p_2)}{(\bfb_1+p_1)^2+(\bfb_2+p_2)^2},
\end{gather*}
and the Jacobian determinant is
\begin{gather*}
    \mathbf{J}_{z_1,z_2}(\bfb_1,\bfb_2)=
    \begin{vmatrix}
\dfrac{\pa z_1}{\pa\bfb_1} & \dfrac{\pa z_1}{\pa\bfb_2}\\
&\\
\dfrac{\pa z_2}{\pa\bfb_1} & \dfrac{\pa z_2}{\pa\bfb_2}
\end{vmatrix}
=\dfrac{q_1^2+q_2^2}{\left((\bfb_1+p_1)^2+(\bfb_2+p_2)^2\right)^2}.
\end{gather*}
For denoting
\begin{gather*}
    \Omega=\bigg\{\bfb=\bfb_1+i\bfb_2\in\C:u_1[(\bfb_1+p_1)^2+(\bfb_2+p_2)^2]\\
    -q_1(\bfb_1+p_1)-q_2(\bfb_2+p_2)>0\bigg\},
\end{gather*}
we have
\begin{gather*}
    z_1+iz_2\in\hlp\quad\Longleftrightarrow\quad\bfb_1+i\bfb_2\in\Omega.
\end{gather*}
For this change of variables, we continue to establish
\begin{gather*}
    \int\limits_{-\infty}^\infty
    \int\limits_0^\infty z_1^\ell|\bfe_{\psi,\phi}h(z_1+iz_2)|^2\,dz_1dz_2\\
    =\iint\limits_\Omega
    \left[u_1((\bfb_1+p_1)^2+(\bfb_2+p_2)^2)-q_1(\bfb_1+p_1)-q_2(\bfb_2+p_2)\right]^\ell\\
    \times|h(\bfb_1+i\bfb_2)|^2\,d\bfb_1 d\bfb_2.
\end{gather*}
By \eqref{cond-self-mapping}, if $u_1\ne 0$, then we estimate
\begin{align*}
    0&<u_1((\bfb_1+p_1)^2+(\bfb_2+p_2)^2)-q_1(\bfb_1+p_1)-q_2(\bfb_2+p_2)\\
    &=u_1\bfb_1^2+(2u_1p_1-q_1)\bfb_1+u_1p_1^2-p_1q_1-\dfrac{q_2^2}{4u_1}
    +u_1\left(\bfb_2+p_2-\dfrac{q_2}{2u_1}\right)^2\\
    &\leq (2u_1p_1-q_1)\bfb_1.
\end{align*}
Thus, the following is obtained
\begin{gather*}
    \int\limits_{-\infty}^\infty
    \int\limits_0^\infty z_1^\ell|\bfe_{\psi,\phi}h(z_1+iz_2)|^2\,dz_1dz_2\\
    \leq
    \iint\limits_\Omega
    \bfb_1^\ell|h(\bfb_1+i\bfb_2)|^2\,d\bfb_1 d\bfb_2
    \times
    \begin{cases}
        (2u_1p_1-q_1)^\ell,\quad\text{if $u_1\ne 0$},\\
        \\
        (-q_1)^\ell,\quad\text{if $u_1=0$}
    \end{cases}\\
    \leq
    \iint\limits_{\hlp}
    \bfb_1^\ell|h(\bfb_1+i\bfb_2)|^2\,d\bfb_1 d\bfb_2
    \times
    \begin{cases}
        (2u_1p_1-q_1)^\ell,\quad\text{if $u_1\ne 0$},\\
        \\
        (-q_1)^\ell,\quad\text{if $u_1=0$},
    \end{cases}
\end{gather*}
which gives $h(\cdot)\in\text{dom}(W_{\psi,\phi,\max})$, and so \eqref{eq-dom=spc} holds.
\end{proof}

\section{Hermitian property}\label{sec-her}
Recall that an unbounded, linear operator $K$ is called \emph{hermitian} if $K=K^*$. In this section, we investigate how the function-theoretic properties of $f(\cdot)$ and $g(\cdot)$ affect the hermitian property of $W_{f,g}$ and vice versa. As it will be seen in full detail in Theorems \ref{thm-her-1} and \ref{thm-her-2}, the hermitian property of $W_{f,g}$ restricts significantly the possible functions $f(\cdot)$ and $g(\cdot)$. Moreover, the hermitian property ensures the boundedness of $W_{f,g}$.
\begin{lem}\label{lem-her}
Let $\ell\in\Z_{\geq 0}$. Suppose that $f:\hlp\to\C$ and $g:\hlp\to\hlp$ are analytic functions with the property
\begin{gather}\label{cond-her-0}
    \overline{f(z)}K_{g(z)}(w)
    =f(w)K_z(g(w)),\quad\forall z,w\in\hlp.
\end{gather}
Then there are three cases of the functions $f(\cdot)$ and $g(\cdot)$.
\begin{enumerate}
\item The first case is
\begin{gather}\label{form-a-b-del-her-III}
    g(w)=\mu,\quad f(w)=\dfrac{\varepsilon}{(w+\overline{\mu})^{\ell+2}},
\end{gather}
where coefficients satisfy
\begin{gather}\label{cond-a-b-del-her-III}
    \mu\in\hlp,\quad\varepsilon\in\R.
\end{gather}
\item The second case is
\begin{gather}\label{form-a-b-del-her-II}
    g(w)=w+\gamma,\quad f(w)=\lambda,
\end{gather}
where coefficients satisfy
\begin{gather}\label{cond-a-b-del-her-II}
    \gamma\in\R_{\geq 0},\quad\lambda\in\R.
\end{gather}
    \item The third case is
    \begin{gather}\label{form-her}
    g(w)=-\dfrac{\be}{\alpha}-\dfrac{2}{\alpha w-\overline{\be}}
    ,\quad f(w)=\dfrac{\delta}{(\alpha w-\overline{\be})^{\ell+2}},
\end{gather}
where coefficients satisfy
\begin{gather}\label{cond-self-map-0}
\alpha,\delta\in\R\setminus\{0\},\quad\be\in\C,\quad
    \begin{cases}
    \text{either $\re\be<0<\alpha\leq\frac{(\re\be)^2}{2}$},\\
    \\
    \text{or $\alpha<0=\re\be$}.
    \end{cases}
\end{gather}
\end{enumerate}
\end{lem}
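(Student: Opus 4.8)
The plan is to extract all consequences of the functional equation \eqref{cond-her-0} by specializing the two free variables $z,w\in\hlp$ and by exploiting the explicit rational form of the reproducing kernels $K_z(x)=\dfrac{2^\ell(1+\ell)}{(x+\overline z)^{\ell+2}}$. Writing out \eqref{cond-her-0} with the kernels inserted gives
\begin{gather*}
\dfrac{\overline{f(z)}}{(w+\overline{g(z)})^{\ell+2}}=\dfrac{f(w)}{(g(w)+\overline z)^{\ell+2}},\qquad\forall z,w\in\hlp,
\end{gather*}
after cancelling the common constant $2^\ell(1+\ell)$. First I would set $w$ equal to a fixed point $w_0$ to see that $\overline{f(z)}$ is, up to the nonvanishing analytic factor $(w_0+\overline{g(z)})^{-(\ell+2)}$ evaluated at $\overline z$, a constant multiple of $(g(\overline z)+\overline{z}_0)^{-(\ell+2)}$; taking $\overline{\phantom{x}}$ of both sides this tells me that $f$ itself has the form $f(w)=c\,(w+\overline{w_0})^{\ell+2}/(g(w)+\overline{w_0})^{\ell+2}$ is \emph{wrong-looking}, so instead I would more carefully treat $f(z)$ and $g(z)$ as unknowns and note that the identity says the function $z\mapsto \overline{f(z)}\,(g(w)+\overline z)^{\ell+2}$ equals $z\mapsto f(w)\,(w+\overline{g(z)})^{\ell+2}$; since the left side is anti-analytic in $z$ and for each fixed $w$ the right side is a polynomial of degree $\le \ell+2$ in $\overline{g(z)}$, one concludes $g$ is (anti-analytic hence, being analytic, ) at most a linear-fractional map, and then matching shows $g$ is linear-fractional.

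Second, having reduced $g$ to a linear-fractional self-map of $\hlp$ (of the general form $g(w)=(aw+b)/(cw+d)$), I would plug this into the kernel identity and compare the two sides as rational functions of $w$ (for each fixed $z$) and of $z$ (for each fixed $w$). Matching numerator/denominator degrees forces a dichotomy: either $c=0$, in which case $g(w)=w+\gamma$ is a translation (after normalizing the leading coefficient, which the self-map condition and the symmetry pin down to be $1$), or $c\neq 0$, in which case $g$ genuinely has a pole. The degenerate subcase where $g$ is constant, $g\equiv\mu$, must be separated out at the start (it corresponds to $a/c$ with the ``variable part'' vanishing). In each of the three resulting structural cases I would then read off $f$ directly from the identity: in the constant case the identity forces $f(w)=\varepsilon/(w+\overline\mu)^{\ell+2}$, in the translation case it forces $f\equiv\lambda$ constant, and in the pole case it forces $f(w)=\delta/(\alpha w-\overline\be)^{\ell+2}$ with $g(w)=-\be/\alpha-2/(\alpha w-\overline\be)$.

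Third, I would nail down the \emph{reality} and \emph{range} constraints on the parameters. Reality of $\varepsilon$, $\lambda$, $\delta$, $\alpha$ and of $\gamma$ comes from putting $z=w$ in \eqref{cond-her-0}: the equation then reads $\overline{f(w)}K_{g(w)}(w)=f(w)K_{w}(g(w))$, and since $K_{g(w)}(w)=\overline{K_w(g(w))}$ by the explicit formula (the denominators are complex conjugates), this says $\overline{f(w)}\,\overline{K_w(g(w))}=f(w)\,K_w(g(w))$, i.e. $f(w)K_w(g(w))\in\R$ for all $w$; feeding in the explicit forms of $f$ and $g$ and letting $w$ range yields exactly $\varepsilon\in\R$, $\lambda\in\R$, $\delta,\alpha\in\R\setminus\{0\}$ and $\gamma$ real, while $\gamma\ge 0$, $\mu\in\hlp$, and the two-branch condition \eqref{cond-self-map-0} are precisely the statement that $g$ maps $\hlp$ into itself — which for the pole case is obtained by applying Lemma \ref{lem-self-map} with $p=\be/\alpha$, $q=2/\alpha$, $u=\overline\be/\alpha$ and simplifying the resulting inequalities.

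The main obstacle I anticipate is the bookkeeping in the second step: showing rigorously that the only linear-fractional $g$ compatible with \eqref{cond-her-0} fall into exactly these three normal forms, with no spurious extra parameters. The cleanest route is to differentiate or to compare Taylor/Laurent coefficients of both sides at a convenient point (say let $z\to\infty$ along the real axis while $w$ is fixed, and vice versa) so that the $(\ell+2)$-th powers linearize; one then gets a small system of equations in the coefficients $a,b,c,d$ of $g$ and the value $f(w)$, whose solution set is the claimed three-case list. Care is needed because $\ell+2\ge 2$ means one cannot simply take $(\ell+2)$-th roots globally — the root must be chosen consistently, which is where the hypothesis $\ell\in\Z_{\ge 0}$ (flagged in Section \ref{sec-prepare}) is used, since then $(xy)^{\ell+2}=x^{\ell+2}y^{\ell+2}$ and the branch issues disappear.
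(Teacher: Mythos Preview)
Your overall architecture---reduce to a linear-fractional $g$, then split into constant/translation/genuine-pole cases, then pin down the reality and self-map constraints---matches the paper's, and your $z=w$ trick for reality is a nice shortcut. But the load-bearing step, \emph{showing that $g$ must be linear fractional}, is not actually proved in your proposal. The sentence ``since the left side is anti-analytic in $z$ and for each fixed $w$ the right side is a polynomial of degree $\le\ell+2$ in $\overline{g(z)}$, one concludes $g$ is \dots\ at most a linear-fractional map'' is not an argument: both sides are anti-analytic in $z$ regardless of what $g$ is, and being polynomial in $\overline{g(z)}$ tells you nothing about the degree of $g$ itself. You flag differentiation as a tool, but you propose to use it only \emph{after} assuming $g$ is linear fractional, for the coefficient-matching bookkeeping; that is the wrong place.

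The paper's route is to differentiate \emph{first}, to extract an equation in $g$ alone. From
\[
\frac{f(w)}{\overline{f(z)}}=\Bigl(\frac{g(w)+\overline z}{w+\overline{g(z)}}\Bigr)^{\ell+2}
\]
one applies $\partial_w$ and divides to get $f'(w)/f(w)=(\ell+2)\bigl(g'(w)/(g(w)+\overline z)-1/(w+\overline{g(z)})\bigr)$; then $\partial_{\overline z}$ kills the left side and yields
\[
(w+\overline{g(z)})^2\,g'(w)=(g(w)+\overline z)^2\,\overline{g'(z)}.
\]
This is the key identity. If $g''\equiv 0$ it forces $g(w)=w+\gamma$ (or $g$ constant). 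If $g''\not\equiv 0$, one more round of $\partial_w\partial_{\overline z}$ and the substitution $h=g''/g'$ gives a linear relation $h=\alpha g+\beta$ with $\alpha\in\R$, which is a Riccati-type ODE whose solutions are exactly the linear fractional maps in \eqref{form-her}. The reality of $\alpha,\delta,\gamma,\varepsilon,\lambda$ falls out of this same process (e.g.\ $\alpha\in\R$ because $h'/g'$ is forced to equal its own conjugate), though your $z=w$ argument would also recover it. So: keep your case analysis and your reality argument, but replace the hand-wave in step~3 by the two-step differentiation above.
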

\begin{proof}
Condition \eqref{cond-her-0} is equivalent to
\begin{gather}\label{cond-her-1}
    \overline{f(z)}(g(w)+\overline{z})^{\ell+2}
    =f(w)(w+\overline{g(z)})^{\ell+2},\quad\forall z,w\in\hlp,
\end{gather}
which infers that the function $f(\cdot)$ never vanishes. We rewrite \eqref{cond-her-1} in the following form
\begin{gather}\label{eq-02-26-939}
    \dfrac{f(w)}{\overline{f(z)}}
    =\left(\dfrac{g(w)+\overline{z}}{w+\overline{g(z)}}\right)^{\ell+2},\quad\forall z,w\in\hlp.
\end{gather}
Differentiating both sides with respect to the variable $w$ (i.e. taking derivative $\pa_w$ on both sides of \eqref{eq-02-26-939}), we get
\begin{gather}
    \dfrac{f'(w)}{\overline{f(z)}}
    \nonumber=(\ell+2)\left(\dfrac{g(w)+\overline{z}}{w+\overline{g(z)}}\right)^{\ell+1}\\
    \label{eq-02-26-940}\times\dfrac{g'(w)(w+\overline{g(z)})-(g(w)+\overline{z})}{(w+\overline{g(z)})^2},\quad\forall z,w\in\hlp.
\end{gather}
By \eqref{eq-02-26-939} and \eqref{eq-02-26-940}, the following is obtained
\begin{gather}\label{eq-202103051109}
    \dfrac{f'(w)}{f(w)}
    =(\ell+2)\left(\dfrac{g'(w)}{g(w)+\overline{z}}-\dfrac{1}{w+\overline{g(z)}}\right),
\end{gather}
which implies, after taking derivative $\pa_{\overline{z}}$, that
\begin{gather}\label{eq-add-02-25}
    (w+\overline{g(z)})^2g'(w)=(g(w)+\overline{z})^2\overline{g'(z)},\quad\forall z,w\in\hlp.
\end{gather}
Consider two cases as follows.

\textbf{Case 1:} $g''(\cdot)\equiv 0$. Then $g(z)=uz+v$, where $u,v$ are complex constants. Note that
\begin{gather*}
    \text{$g(\cdot)$ is a self-mapping of $\hlp$}\quad\Longleftrightarrow\quad 
    \begin{cases}
    \text{either $u\in\R_{>0},v\in\C_{\mathbf{Re}\geq 0}$},\\
    \\
    \text{or $u=0,v\in\hlp$}.
    \end{cases}
\end{gather*}
If $u=0,v\in\hlp$, then $g(w)=v$ and accordingly \eqref{cond-her-1} is equal to
\begin{gather*}
    \overline{f(z)}(\overline{z}+v)^{\ell+2}
    =f(w)(w+\overline{v})^{\ell+2},\quad\forall z,w\in\hlp.
\end{gather*}
The equality above means that $f(\cdot)(\cdot+\overline{v})^{\ell+2}$ is a real-valued constant function and we get \eqref{form-a-b-del-her-III}-\eqref{cond-a-b-del-her-III}. When $u\in\R_{>0},v\in\C_{\mathbf{Re}\geq 0}$, we write \eqref{eq-add-02-25} in the explicit form
\begin{gather*}
    (w+u\overline{z}+\overline{v})^2u
    =(uw+\overline{z}+v)^2u,\quad\forall z,w\in\hlp.
\end{gather*}
After equating coefficients of $w,\overline{z}$, we get
\begin{gather*}
    u=1,\quad v\in\R_{\geq 0}.
\end{gather*}
It follows from \eqref{cond-her-1} that $f(\cdot)$ is a real-valued constant function and we get \eqref{form-a-b-del-her-II}-\eqref{cond-a-b-del-her-II}.

\textbf{Case 2:} $g''(\cdot)\not\equiv 0$. Let us take derivative $\pa_{\overline{z}}\circ\pa_w$ on both sides of \eqref{eq-add-02-25} to get
\begin{gather*}
    (w+\overline{g(z)})g''(w)\overline{g'(z)}\\
    =(g(w)+\overline{z})g'(w)\overline{g''(z)},\quad\forall z,w\in\hlp.
\end{gather*}
Setting
\begin{gather*}
    h(w)=\dfrac{g''(w)}{g'(w)},
\end{gather*}
the equality above turns into
\begin{gather}\label{cond-her-2}
    h(w)(w+\overline{g(z)})
    =\overline{h(z)}(g(w)+\overline{z}),\quad\forall z,w\in\hlp,
\end{gather}
which implies, after taking $\pa_{\overline{z}}\circ\pa_w$, that
\begin{gather*}
    h'(w)\overline{g'(z)}
    =g'(w)\overline{h'(z)},\quad\forall z,w\in\hlp.
\end{gather*}
Consequently, there exist constants $\alpha\in\R,\be\in\C$, such that
\begin{gather*}
    h'(w)=\alpha g'(w)
    \Longrightarrow h(w)=\alpha g(w)+\be.
\end{gather*}
Through substituting this form of $h(\cdot)$ back into \eqref{cond-her-2}, and then rearranging terms, we get
\begin{gather*}
    \alpha wg(w)+\be w-\overline{\be}g(w)\\
    =\alpha\overline{zg(z)}+\overline{\be z}-\be\overline{g(z)},\quad\forall z,w\in\hlp.
\end{gather*}
Accordingly, there is a constant $\gamma\in\R$ for which
\begin{gather*}
    g(w)=\dfrac{-\be w+\gamma}{\alpha w-\overline{\be}}.
\end{gather*}

- If $\alpha=\be=0$, then $h\equiv 0$ and hence $g''(w)\equiv 0$; but this is impossible.

- If $\alpha=0,\be\ne 0$, then
\begin{gather*}
    g(w)=\dfrac{\be}{\overline{\be}}z-\dfrac{\gamma}{\overline{\be}}.
\end{gather*}
Note that
\begin{gather*}
    \text{$g(\cdot)$ is a self-mapping of $\hlp$}\quad\Longleftrightarrow\quad 
    \dfrac{\be}{\overline{\be}}\in\R_{>0},\dfrac{\gamma}{\overline{\be}}\in\C_{\textbf{Re}\leq 0}\\
    \Longleftrightarrow\be\in\R,\dfrac{\gamma}{\be}\in\C_{\textbf{Re}\leq 0}.
\end{gather*}
By \eqref{eq-202103051109}-\eqref{eq-add-02-25}, we get \eqref{form-a-b-del-her-II}-\eqref{cond-a-b-del-her-II}.

- If $\alpha\ne 0$, then
\begin{gather*}
    g(w)
    =-\dfrac{\be}{\alpha}+\left(\gamma-\dfrac{|\be|^2}{\alpha}\right)\dfrac{1}{\alpha w-\overline{\be}}.
\end{gather*}
For that reason, we have
\begin{gather*}
    -\dfrac{2\alpha}{\alpha w-\overline{\be}}=\dfrac{g''(w)}{g'(w)}=h(w)\\
    =\alpha g(w)+\be=\alpha\left(\gamma-\dfrac{|\be|^2}{\alpha}\right)\dfrac{1}{\alpha w-\overline{\be}},
\end{gather*}
which gives
\begin{gather*}
    -2=\gamma-\dfrac{|\be|^2}{\alpha}.
\end{gather*}
Thus, the function $g(\cdot)$ is of form
\begin{gather*}
    g(w)=-\dfrac{\be}{\alpha}-\dfrac{2}{\alpha w-\overline{\be}},
\end{gather*}
and so, \eqref{cond-her-1} is simplified to
\begin{gather*}
    \dfrac{\overline{f(z)}}{(\alpha w-\overline{\be})^{\ell+2}}
    =\dfrac{f(w)}{(\alpha\overline{z}-\be)^{\ell+2}},\quad\forall z,w\in\hlp.
\end{gather*}
This is equivalent to the fact that $(\alpha\cdot-\overline{\be})^{\ell+2}f(\cdot)$ is a real-valued constant function and we obtain \eqref{form-her}. Note that \eqref{cond-self-map-0} follows directly from Lemma \ref{lem-self-map}.
\end{proof}

\begin{rem}\label{rem-202103051127}
If the functions $f(\cdot)$ and $g(\cdot)$ are as in \eqref{form-a-b-del-her-II}-\eqref{cond-a-b-del-her-II}, then by \cite{zbMATH05907249} the composition operator $C_{g,\max}$ is bounded on $\spc$ and so the weighted composition operator $W_{f,g,\max}$ is automatically bounded on $\spc$.
\end{rem}

\begin{thm}\label{thm-her-1}
Let $\ell\in\Z_{\geq 0}$. Let $f:\hlp\to\C$ and $ g:\hlp\to\hlp$ be analytic functions. Then the operator $W_{f,g,\max}$ is hermitian on $\spc$ if and only if \eqref{form-a-b-del-her-III}-\eqref{cond-self-map-0} hold. In this case, the operator $W_{f,g,\max}$ is bounded.
\end{thm}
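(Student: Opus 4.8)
The plan is to combine Proposition \ref{W*Kz-prop} with Lemma \ref{lem-her} and the boundedness lemmas proved in Section \ref{sec-prepare}. First I would establish the easy direction. Suppose $W_{f,g,\max}$ is hermitian, i.e. $W_{f,g,\max}=W_{f,g,\max}^*$. Applying both operators to a kernel function $K_z$ and using Proposition \ref{W*Kz-prop}, for every $z\in\hlp$ we get
\begin{gather*}
    W_{f,g,\max}K_z=W_{f,g,\max}^*K_z=\overline{f(z)}K_{g(z)},
\end{gather*}
so in particular $K_z\in\text{dom}(W_{f,g,\max})$ and $\bfe_{f,g}K_z=\overline{f(z)}K_{g(z)}$; evaluating this identity of functions at an arbitrary point $w\in\hlp$ gives exactly
\begin{gather*}
    f(w)K_z(g(w))=\overline{f(z)}K_{g(z)}(w),\quad\forall z,w\in\hlp,
\end{gather*}
which is condition \eqref{cond-her-0}. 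Lemma \ref{lem-her} then forces $f(\cdot)$ and $g(\cdot)$ to be of one of the three forms \eqref{form-a-b-del-her-III}--\eqref{cond-self-map-0}.

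For the converse and the boundedness claim, I would treat the three cases separately and show in each that $W_{f,g,\max}$ is bounded and self-adjoint. In the translation case \eqref{form-a-b-del-her-II}--\eqref{cond-a-b-del-her-II}, boundedness is Remark \ref{rem-202103051127} (via \cite{zbMATH05907249}). In the constant case \eqref{form-a-b-del-her-III}, the operator has rank one: $\bfe_{f,g}h=\tfrac{\varepsilon}{2^\ell(1+\ell)}\inner{h}{K_\mu}K_{\ovl{\mu}}$ after identifying $f$ with a scalar multiple of $K_{\ovl\mu}$, which is bounded by Cauchy--Schwarz (this is the $q=0$ computation of Lemma \ref{lem-bdd-psi-phi}, up to constants). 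In the third case \eqref{form-her}, the function $g(w)=-\be/\alpha-2/(\alpha w-\ovl\be)$ is of the form \eqref{eq-bdd-special} with $p=\be/\alpha$, $q=2/\alpha$, $u=\ovl\be/\alpha$, and the weight $f(w)=\delta(\alpha w-\ovl\be)^{-(\ell+2)}$ is, up to the nonzero constant $\delta\alpha^{-(\ell+2)}$, equal to $(w-u)^{-(\ell+2)}=\psi(w)$; the constraints \eqref{cond-self-map-0} are precisely what is needed to invoke Lemma \ref{lem-self-map} and hence Lemma \ref{lem-bdd-psi-phi}, giving boundedness of $W_{f,g,\max}$ on $\spc$. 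Once boundedness is known, self-adjointness follows by checking $\inner{W_{f,g}h_1}{h_2}=\inner{h_1}{W_{f,g}h_2}$ on the dense span of kernel functions: both sides reduce, via $h(z)=\inner{h}{K_z}$ and Proposition \ref{W*Kz-prop}, to the identity \eqref{cond-her-0}, which holds in each of the three cases by construction (in the first two cases by direct substitution, in the third by the computation in the proof of Lemma \ref{lem-her}).

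The main obstacle is organizational rather than deep: one must verify that the parameter translations between the normal forms of Lemma \ref{lem-her} and the hypotheses of Lemmas \ref{lem-self-map} and \ref{lem-bdd-psi-phi} are consistent — in particular that \eqref{cond-self-map-0} maps onto \eqref{cond-self-mapping} under $p=\be/\alpha$, $q=2/\alpha$, $u=\ovl\be/\alpha$ — and that the constant prefactors ($\delta$, $\alpha^{-(\ell+2)}$, $\varepsilon$) are real, which is what ultimately makes $W_{f,g,\max}$ equal to its own adjoint rather than merely normal. A secondary point to be careful about is that hermiticity is being asserted for the \emph{maximal} operator, so one should note that $W_{f,g,\max}$ is closed (Proposition \ref{prop-closed-op}) and that once we show $\text{dom}(W_{f,g,\max})=\spc$ with a bounded graph, the operator is everywhere-defined and bounded, hence its adjoint is the genuine Hilbert-space adjoint and the kernel-function computation legitimately certifies $W_{f,g,\max}=W_{f,g,\max}^*$.
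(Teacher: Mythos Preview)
Your proposal is correct and follows essentially the same route as the paper's own proof: both directions hinge on Proposition \ref{W*Kz-prop} to reduce hermiticity to the kernel identity \eqref{cond-her-0}, then Lemma \ref{lem-her} for the forward direction, and Lemma \ref{lem-bdd-psi-phi} together with Remark \ref{rem-202103051127} for boundedness in the converse. The paper's converse is terser---it simply cites those two results and says a direct computation gives \eqref{eq-W*=W}---whereas you spell out the three cases and the parameter dictionary $p=\be/\alpha$, $q=2/\alpha$, $u=\ovl\be/\alpha$ explicitly; this is helpful detail but not a different argument.
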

\begin{proof}
Suppose that the operator $W_{f,g,\max}$ is hermitian, which means $W_{f,g,\max}^*=W_{f,g,\max}$. Then
\begin{gather}\label{eq-W*=W}
    W_{f,g,\max}^*K_z=W_{f,g,\max}K_z,\quad\forall z\in\hlp.
\end{gather}
By Proposition \ref{W*Kz-prop}, equation \eqref{eq-W*=W} turns into \eqref{cond-her-0} and so we make use of Lemma \ref{lem-her} to get \eqref{form-a-b-del-her-III}-\eqref{cond-self-map-0}.

Conversely, take $f(\cdot)$ and $g(\cdot)$ as in the statement of the theorem. Note that by Lemma \ref{lem-bdd-psi-phi} and Remark \ref{rem-202103051127} the operator $W_{f,g,\max}$ is bounded. A direct computation gives \eqref{eq-W*=W}, and hence it is hermitian on the whole space $\spc$. 
\end{proof}

\begin{thm}\label{thm-her-2}
Let $\ell\in\Z_{\geq 0}$. Let $f:\hlp\to\C$ and $g:\hlp\to\hlp$ be analytic functions. Then the operator $W_{f,g}$ is hermitian on $\spc$ if and only if it verifies two conditions.
\begin{enumerate}
    \item The operator $W_{f,g}$ is maximal; that is $W_{f,g}=W_{f,g,\max}$.
    \item \eqref{form-a-b-del-her-III}-\eqref{cond-self-map-0} hold. 
\end{enumerate}
In this case, the operator $W_{f,g}$ is bounded.
\end{thm}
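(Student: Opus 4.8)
The plan is to reduce everything to Theorem~\ref{thm-her-1}. The ``if'' direction is immediate: if $W_{f,g}=W_{f,g,\max}$ and \eqref{form-a-b-del-her-III}--\eqref{cond-self-map-0} hold, then Theorem~\ref{thm-her-1} already asserts that $W_{f,g,\max}=W_{f,g}$ is bounded and hermitian, so there is nothing to add. Hence all the work lies in the ``only if'' direction, which I would split into two halves: first recover the function-theoretic forms \eqref{form-a-b-del-her-III}--\eqref{cond-self-map-0}, and then upgrade the nonmaximal operator to the maximal one.

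To extract the forms, assume $W_{f,g}$ is hermitian, so that $W_{f,g}=W_{f,g}^{*}$ and in particular $\text{dom}(W_{f,g})=\text{dom}(W_{f,g}^{*})$. By Proposition~\ref{W*Kz-prop} this common domain contains every reproducing kernel $K_z$, $z\in\hlp$. On the one hand $W_{f,g}K_z=W_{f,g}^{*}K_z=\overline{f(z)}K_{g(z)}$; on the other hand, since $W_{f,g}\preceq W_{f,g,\max}$ and $K_z\in\text{dom}(W_{f,g})$, we also have $W_{f,g}K_z=W_{f,g,\max}K_z=\bfe_{f,g}K_z=f\cdot K_z\circ g$. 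Evaluating these two expressions at an arbitrary $w\in\hlp$ gives exactly \eqref{cond-her-0}, and Lemma~\ref{lem-her} then forces $f(\cdot)$ and $g(\cdot)$ into one of the three families \eqref{form-a-b-del-her-III}--\eqref{cond-self-map-0}. This establishes condition (2).

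It remains to prove condition (1), namely $W_{f,g}=W_{f,g,\max}$. Since the forms now hold, Theorem~\ref{thm-her-1} tells us that $W_{f,g,\max}$ is bounded and hermitian, hence $W_{f,g,\max}^{*}=W_{f,g,\max}$. I would then invoke the standard adjoint-reversal property for densely defined operators: from $W_{f,g}\preceq W_{f,g,\max}$ it follows that $W_{f,g,\max}^{*}\preceq W_{f,g}^{*}$, and therefore
\begin{gather*}
    W_{f,g,\max}=W_{f,g,\max}^{*}\preceq W_{f,g}^{*}=W_{f,g}\preceq W_{f,g,\max},
\end{gather*}
the middle equality being the hypothesis that $W_{f,g}$ is hermitian. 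Hence $W_{f,g}=W_{f,g,\max}$, which is condition (1), and boundedness is then inherited from Theorem~\ref{thm-her-1}.

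The only genuinely delicate point is the displayed chain above: one must check that ``$\preceq$'' is an inclusion of graphs, that passing to adjoints reverses it, and that ``hermitian'' already entails $W_{f,g}$ is densely defined so that $W_{f,g}^{*}$ is legitimate. Beyond this bookkeeping, the nonmaximal statement contains no new analysis — its entire content is that a hermitian restriction of $W_{f,g,\max}$ cannot be a proper one.
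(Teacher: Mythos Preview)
Your argument is correct and essentially identical to the paper's: both reduce the ``if'' direction to Theorem~\ref{thm-her-1}, derive \eqref{cond-her-0} on kernels via Proposition~\ref{W*Kz-prop} to feed Lemma~\ref{lem-her}, and then close with the same adjoint-reversal chain $W_{f,g,\max}=W_{f,g,\max}^{*}\preceq W_{f,g}^{*}=W_{f,g}\preceq W_{f,g,\max}$. The only cosmetic difference is ordering: the paper first passes to $W_{f,g,\max}^{*}\preceq W_{f,g,\max}$ and reads off \eqref{cond-her-0} there, whereas you read it off directly from $W_{f,g}$ --- but the content is the same.
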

\begin{proof}
The implication $\Longleftarrow$ is obtained from Theorem \ref{thm-her-1}. Let us prove the implication $\Longrightarrow$. First, we show that the operator $W_{f,g,\max}$ is hermitian. Indeed, since $W_{f,g}\preceq W_{f,g,\max}$, by \cite[Proposition 1.6]{KS}, we have
$$
W_{f,g,\max}^*\preceq W_{f,g}^*=W_{f,g}\preceq W_{f,g,\max}.
$$
Proposition \ref{W*Kz-prop} shows that kernel functions always belong to the domain $\text{dom}(W_{f,g,\max}^*)$, and so,
$$
W_{f,g,\max}^*K_z(u)=W_{f,g,\max}K_z(u),\quad\forall z,u\in\hlp.
$$
By Lemma \ref{lem-her}, conditions \eqref{form-a-b-del-her-III}-\eqref{cond-self-map-0} hold, and hence, by Theorem \ref{thm-her-1}, the operator $W_{f,g,\max}$ is hermitian. For that reason, item (1) follows from the following inclusions
$$
W_{f,g}\preceq W_{f,g,\max}=W_{f,g,\max}^*\preceq W_{f,g}^*=W_{f,g}.
$$
\end{proof}

\section{Unitary property}\label{sec-uni-op}
The lemma below is instrumental in the characterization of all unitary weighted composition operators on $\spc$. It allows us to compute explicitly the symbols whose weighted composition operator satisfies $W_{f,g,\max}W_{f,g,\max}^*=I$.
\begin{lem}\label{lem-uni}
Let $\ell\in\Z_{\geq 0}$. Suppose that $f:\hlp\to\C$ and $g:\hlp\to\hlp$ are analytic functions with the property
\begin{gather}\label{as-uni}
    \overline{f(z)}f(x)K_{g(z)}(g(x))
    =K_z(x),\quad\forall x,z\in\hlp.
\end{gather}
Then these functions are of the following forms.
\begin{enumerate}
    \item Either
    \begin{gather}\label{form-uni-2}
    g(z)=|C|^{2/(\ell+2)}z+i\delta,\quad f(z)=C,    
    \end{gather}
    where
    \begin{gather}\label{cond-uni-2}
    C\in\C\setminus\{0\},\quad\delta\in\R,
\end{gather}
    \item or
    \begin{gather}\label{form-uni}
    g(z)=\dfrac{|\beta|^{2/(\ell+2)}}{z-i\alpha}+i\theta,\quad f(z)=\dfrac{\beta}{(z-i\alpha)^{\ell+2}},
    \end{gather}
where
\begin{gather}\label{cond-uni}
    \beta\in\C\setminus\{0\},\quad\alpha,\theta\in\R.
\end{gather}
\end{enumerate}
\end{lem}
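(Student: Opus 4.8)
The plan is to mimic the strategy of Lemma \ref{lem-her}: convert the functional identity \eqref{as-uni} into a clean relation among $f$, $g$ and the explicit kernel formula, differentiate to kill the weight $f$, and then solve an ODE-type constraint on $g$. Writing out the kernels, \eqref{as-uni} becomes
\begin{gather*}
    \overline{f(z)}f(x)\,\dfrac{2^\ell(1+\ell)}{(g(x)+\overline{g(z)})^{\ell+2}}
    =\dfrac{2^\ell(1+\ell)}{(x+\overline{z})^{\ell+2}},\quad\forall x,z\in\hlp,
\end{gather*}
i.e. $\overline{f(z)}f(x)(x+\overline{z})^{\ell+2}=(g(x)+\overline{g(z)})^{\ell+2}$. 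In particular $f$ never vanishes, so taking a holomorphic logarithm locally and then applying $\pa_x$ and afterwards $\pa_{\overline z}$ should eliminate $f$ entirely and yield a relation purely in $g,g'$. Concretely, from
\begin{gather*}
    \dfrac{f'(x)}{f(x)}+\dfrac{\ell+2}{x+\overline{z}}
    =(\ell+2)\,\dfrac{g'(x)}{g(x)+\overline{g(z)}}
\end{gather*}
one applies $\pa_{\overline z}$ to get
\begin{gather*}
    \dfrac{1}{(x+\overline{z})^2}=\dfrac{g'(x)\,\overline{g'(z)}}{(g(x)+\overline{g(z)})^2},\quad\forall x,z\in\hlp,
\end{gather*}
which is the analogue of \eqref{eq-add-02-25}.

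Next I would analyze this identity exactly as in the proof of Lemma \ref{lem-her}, splitting on whether $g''\equiv 0$. If $g''\equiv 0$, then $g(z)=uz+v$ is affine; substituting into the displayed identity and equating coefficients forces $u\in\R_{>0}$ and $v$ with $\re v$ determined (in fact, after rescaling, one finds the self-map condition pins down $\re v = 0$ up to the normalization absorbed in $u$), giving the family \eqref{form-uni-2}; the precise relation $u=|C|^{2/(\ell+2)}$ comes from plugging the affine $g$ and constant $f=C$ back into $\overline{C}C(x+\overline z)^{\ell+2}=(u(x+\overline z))^{\ell+2}$, i.e. $|C|^2=u^{\ell+2}$, and $\delta=\im v$ is free. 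If $g''\not\equiv 0$, set $h=g''/g'$; applying $\pa_{\overline z}\circ\pa_x$ to the identity above produces, after simplification, a relation of the shape $h(x)(x+\overline{g(z)}\text{-type term})=\overline{h(z)}(\cdots)$ whose logarithmic derivative shows $h'(x)=c\,g'(x)$ for a \emph{real} constant $c$; integrating, $h=cg+d$. Plugging $h=cg+d$ back into the defining relation for $h$ and matching forces (as in the Lemma \ref{lem-her} computation) $g$ to be a linear-fractional map of the form $g(z)=\dfrac{\beta'}{z-i\alpha}+i\theta$ with $\alpha,\theta\in\R$, and then the normalization from the original identity $\overline{f(z)}f(x)(x+\overline z)^{\ell+2}=(g(x)+\overline{g(z)})^{\ell+2}$ gives $f(z)=\beta(z-i\alpha)^{-(\ell+2)}$ and the magnitude constraint $|\beta|^2 = |\beta'|^{\ell+2}$, rewritten as the coefficient $|\beta|^{2/(\ell+2)}$ in \eqref{form-uni}. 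The self-mapping requirement is handled via Lemma \ref{lem-self-map} (or by direct inspection, since $g$ here maps $\hlp$ into itself precisely when the translation parts are imaginary).

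The main obstacle I anticipate is the bookkeeping in the $g''\not\equiv 0$ case: isolating a \emph{real} integration constant $c$ (the reality is what ultimately makes the families rigid) and then correctly tracking how the various free complex constants collapse — in particular showing the "horizontal" shifts of $g$ must vanish so that only the purely imaginary parameters $\alpha,\theta,\delta$ survive, and pinning the modulus of the leading coefficient of $f$ against that of $g$ through the exponent $\ell+2$. A secondary point to be careful about is that the manipulations (taking logarithms, dividing by $g'$) are valid because $f$ is zero-free and $g'$ is not identically zero, and that the pointwise identities, once established on all of $\hlp\times\hlp$, genuinely force the claimed global forms by the identity theorem for holomorphic functions.
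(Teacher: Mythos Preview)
Your overall plan --- rewrite \eqref{as-uni} via the explicit kernel, differentiate in $x$ and then in $\overline z$ to eliminate $f$, and reduce to a constraint on $g$ alone --- is exactly the paper's strategy, and your derivation of the key identity $(g(x)+\overline{g(z)})^2=g'(x)\,\overline{g'(z)}\,(x+\overline z)^2$ is correct. (The paper splits instead on whether $f$ is constant, but your split on $g''\equiv 0$ is equivalent here and the affine case goes through as you describe.)

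The gap is in the non-affine branch. You try to transplant the Lemma \ref{lem-her} machinery verbatim: set $h=g''/g'$ and assert that a mixed relation of the shape ``$h(x)(x+\overline{g(z)}\text{-type term})=\overline{h(z)}(\cdots)$'' emerges, giving $h'=c\,g'$. But the identity in this lemma is \emph{symmetric} --- it pairs $x+\overline z$ with $g(x)+\overline{g(z)}$, not the mixed terms $w+\overline{g(z)}$ and $g(w)+\overline z$ that drive Lemma \ref{lem-her} --- so that computation does not carry over. If you actually apply $\pa_x$ and then $\pa_{\overline z}$ to $(g(x)+\overline{g(z)})^2=g'(x)\,\overline{g'(z)}\,(x+\overline z)^2$, what drops out (after dividing by $x+\overline z$) is
\[
g''(x)\,\overline{g''(z)}\,(x+\overline z)+2g''(x)\,\overline{g'(z)}+2g'(x)\,\overline{g''(z)}=0.
\]
Dividing by $g''(x)\,\overline{g''(z)}$ shows that $H:=g'/g''$ (the \emph{reciprocal} of your $h$) satisfies $(x+\overline z)+2H(x)+2\overline{H(z)}=0$, whence $H'(x)=-\tfrac12$ --- a specific constant, not $c\,g'(x)$. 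This is precisely the paper's route: from $H(x)=-x/2+\kappa$ with $\kappa\in i\R$ one integrates to obtain the linear-fractional $g$ in \eqref{form-uni} and then reads off $f$. So your sketch in the $g''\not\equiv 0$ case needs to be redone with $g'/g''$ rather than $g''/g'$; the conclusion $h'=c\,g'$ simply does not follow from the identity at hand.
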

\begin{proof}
It can be seen from \eqref{as-uni} that the symbol $f(\cdot)$ never vanishes and moreover the following is obtained
\begin{gather}\label{psiz-psix}
    \overline{f(z)}f(x)=\left(\dfrac{g(x)+\overline{g(z)}}{x+\overline{z}}\right)^{\ell+2}.
\end{gather}
There are two cases of the symbol $f(\cdot)$.

- If $f(\cdot)\equiv C$ is a constant function, then equation \eqref{psiz-psix} is simplified to
\begin{gather}\label{eq-xi-const-0}
    (g(x)+\overline{g(z)})^{\ell+2}
    =|C|^2(x+\overline{z})^{\ell+2}.
\end{gather}
Let us continue to take derivative $\partial_x$ and then
\begin{gather}\label{eq-xi-const-1}
    (g(x)+\overline{g(z)})^{\ell+1}g'(x)
    =|C|^2(x+\overline{z})^{\ell+1}.
\end{gather}
It follows from equations \eqref{eq-xi-const-0}-\eqref{eq-xi-const-1}, that
\begin{gather*}
    \dfrac{(g(x)+\overline{g(z)})^{\ell+1}g'(x)}
    {(g(x)+\overline{g(z)})^{\ell+2}}
    =\dfrac{|C|^2(x+\overline{z})^{\ell+1}}{|C|^2(x+\overline{z})^{\ell+2}}
\end{gather*}
or equivalently to saying that
\begin{gather}\label{eq-xi-const-2}
    g(x)+\overline{g(z)}
    =(x+\overline{z})g'(x).
\end{gather}
Taking derivative $\partial_{\overline{z}}$, the above equation becomes $\overline{g'(z)}
    =g'(x)$; meaning that $g(x)=\lambda x+\mu$, where $\lambda\in\R$ and $\mu\in\C$. This form of $g(\cdot)$ and equation \eqref{eq-xi-const-2} yield $\re\mu=0$ and so \eqref{form-uni-2}-\eqref{cond-uni-2} hold.

- If $f(\cdot)$ is not a constant function, then we take derivative $\partial_x$ on both sides of \eqref{psiz-psix} and then get
\begin{gather}
    \nonumber\overline{f(z)}f'(x)
    =(\ell+2)\left(\dfrac{g(x)+\overline{g(z)}}{x+\overline{z}}\right)^{\ell+1}\\
    \label{psiz-psix'}\times\dfrac{(x+\overline{z})g'(x)-g(x)-\overline{g(z)}}{(x+\overline{z})^2}.
\end{gather}
By \eqref{psiz-psix} and \eqref{psiz-psix'}, the following is obtained
\begin{gather*}
    (\ell+2)
    \left(\dfrac{g'(x)}{g(x)+\overline{g(z)}}-\dfrac{1}{x+\overline{z}}\right)
    =\dfrac{\overline{f(z)}f'(x)}{\overline{f(z)}f(x)}
    =\dfrac{f'(x)}{f(x)}.
\end{gather*}
Taking derivative $\partial_{\overline{z}}$, we obtain
\begin{gather*}
    -\dfrac{g'(x)\overline{g'(z)}}{(g(x)+\overline{g(z)})^2}
    +\dfrac{1}{(x+\overline{z})^2}=0
\end{gather*}
or equivalently to saying that
\begin{gather}\label{eq-varphi-uni}
    (g(x)+\overline{g(z)})^2=(x+\overline{z})^2
    g'(x)\overline{g'(z)}.
\end{gather}
Taking derivative $\partial_x$, we get
\begin{gather*}
    2(g(x)+\overline{g(z)})g'(x)\\
    =2(x+\overline{z})g'(x)\overline{g'(z)}
    +(x+\overline{z})^2g''(x)\overline{g'(z)},
\end{gather*}
which implies, after taking $\partial_{\overline{z}}$, that
\begin{gather*}
    (x+\overline{z})g''(x)\overline{g''(z)}
    +2g'(x)\overline{g''(z)}
    +2g''(x)\overline{g'(z)}
    =0.
\end{gather*}
Setting
\begin{gather}\label{eq-h}
    h(x)=\dfrac{g'(x)}{g''(x)},
\end{gather}
the equation above reads as
\begin{gather}\label{eq-sub-h=}
    (x+\overline{z})\overline{g''(z)}
    +2h(x)\overline{g''(z)}
    +2\overline{g'(z)}
    =0.
\end{gather}
Taking derivative $\partial_x$, the following is obtained
\begin{gather*}
    \overline{g''(z)}+2h'(x)\overline{g''(z)}=0,
\end{gather*}
and so $h'(x)=-1/2$. For that reason, it can be seen that $h(x)=-x/2+\kappa$, where $\kappa$ is some complex constant. Consequently, taking into account form \eqref{eq-h} of $h(\cdot)$, we have
\begin{gather}\label{eq-C-1}
    g'(x)=\left(-\dfrac{x}{2}+\kappa\right)g''(x).
\end{gather}
Substituting the expression $h(x)=-x/2+\kappa$ back into \eqref{eq-sub-h=}, the below equation holds
\begin{gather}\label{eq-C-2}
    (z+2\overline{\kappa})g''(z)+2g'(z)=0.
\end{gather}
Note that equations \eqref{eq-C-1}-\eqref{eq-C-2} gives
\begin{gather*}
    \kappa\in i\R.
\end{gather*}
Setting $D(z)=(z-2\kappa)^2g'(z)$, then
\begin{gather*}
    g''(z)
    =\dfrac{D'(z)}{(z-2\kappa)^2}-\dfrac{2D(z)}{(z-2\kappa)^3}.
\end{gather*}
Equation \eqref{eq-C-1} is reduced to the following
\begin{gather*}
    D'(z)=0,\quad\forall z\in\hlp,
\end{gather*}
which means that $D(z)\equiv D$ is a constant function. Thus, we have
\begin{gather*}
    g'(z)=\dfrac{D}{(z-2\kappa)^2},
\end{gather*}
and so the symbol $g$ can be expressed as
\begin{gather*}
    g(x)=\dfrac{E}{x-F}+G,\quad\text{where $E=-D,F=2\kappa$ and $G=g(0)+\dfrac{E}{F}$}.
\end{gather*}
Substituting this expression of $g(\cdot)$ back into \eqref{eq-varphi-uni} with the note that $F\in i\R$, we get
\begin{gather*}
    \left(\dfrac{E}{x-F}+\dfrac{\overline{E}}{\overline{z}+F}+2\re G\right)^2
    =\dfrac{|E|^2(x+\overline{z})^2}{(x-F)^2(\overline{z}+F)^2}\\
    =|E|^2\left(\dfrac{1}{x-F}+\dfrac{1}{\overline{z}+F}\right)^2.
\end{gather*}
The last equality occurs if and only if $\re G=0$ and $E^2=|E|^2$; meaning that
\begin{gather*}
    g(z)=\dfrac{E}{z-i\alpha}+i\theta,\quad\text{where $\alpha,\theta\in\R$}.
\end{gather*}
For this $g(\cdot)$, equation \eqref{psiz-psix} gives $f(\cdot)$ as in \eqref{form-uni} with condition \eqref{cond-uni}.
\end{proof}

We now state and prove a characterization of unitary weighted composition operators on $\spc$. It turns out that the unitary property significantly restricts the possible symbols for weighted composition operators.
\begin{thm}\label{thm-uni}
Let $\ell\in\Z_{\geq 0}$. Let $g$ be an analytic self-map of $\hlp$ and $f$ be an analytic function on $\hlp$. Then the operator $W_{f,g,\max}$ is unitary on $\spc$ if and only if its symbols are of forms either \eqref{form-uni-2}-\eqref{cond-uni-2} or \eqref{form-uni}-\eqref{cond-uni}.
\end{thm}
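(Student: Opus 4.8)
The plan is to route everything through the reproducing-kernel identity \eqref{as-uni}. For the necessity I feed kernel functions into the unitarity relation and quote Lemma \ref{lem-uni}; for the sufficiency I first establish boundedness with the help of Lemmas \ref{lem-self-map} and \ref{lem-bdd-psi-phi}, then verify \eqref{as-uni} by a direct computation, and finally upgrade the resulting coisometry to a unitary. \emph{Necessity.} If $W_{f,g,\max}$ is unitary it is in particular bounded, so $\text{dom}(W_{f,g,\max})=\spc$ and $W_{f,g,\max}W_{f,g,\max}^{*}=I$. Fix $z\in\hlp$. Proposition \ref{W*Kz-prop} gives $W_{f,g,\max}^{*}K_{z}=\overline{f(z)}K_{g(z)}$, hence
\begin{gather*}
K_{z}=W_{f,g,\max}\!\left(\overline{f(z)}K_{g(z)}\right)=\overline{f(z)}\,f\cdot\left(K_{g(z)}\circ g\right).
\end{gather*}
Evaluating at an arbitrary $x\in\hlp$ is exactly \eqref{as-uni}, so Lemma \ref{lem-uni} forces $f$ and $g$ to be of one of the two stated forms.

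\emph{Sufficiency: boundedness.} Assume first $f,g$ are as in \eqref{form-uni}--\eqref{cond-uni}. Set $u=i\alpha$, $p=-i\theta$, $q=-|\beta|^{2/(\ell+2)}$, and let $\phi,\psi$ be the functions these constants produce in \eqref{eq-bdd-special} and in Lemma \ref{lem-bdd-psi-phi}. Then $\phi=g$, and $(p,q,u)$ satisfies the first alternative of \eqref{cond-self-mapping} because $\re p=\im q=0$, $\re q<0$, $\re u=0$; since also $f=\beta\psi$, we get $W_{f,g,\max}=\beta\,W_{\psi,\phi,\max}$ with identical domain, and Lemma \ref{lem-bdd-psi-phi} makes this bounded on $\spc$. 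Assume next $f,g$ are as in \eqref{form-uni-2}--\eqref{cond-uni-2}, and put $a=|C|^{2/(\ell+2)}>0$; the substitution $x+iy\mapsto ax+i(ay+\delta)$ in the defining integral gives $\norm{W_{f,g,\max}h}=|C|\,a^{-(\ell+2)/2}\norm{h}=\norm{h}$ for every $h\in\spc$, so $W_{f,g,\max}$ is again bounded on $\spc$.

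\emph{Sufficiency: unitarity.} In both cases a direct computation from $K_{z}(x)=2^{\ell}(1+\ell)/(x+\overline{z})^{\ell+2}$ verifies
\begin{gather*}
\overline{f(z)}f(x)K_{g(z)}(g(x))=K_{z}(x),\qquad\forall x,z\in\hlp;
\end{gather*}
the point is the telescoping $g(x)+\overline{g(z)}=a(x+\overline{z})$ in the first case and $g(x)+\overline{g(z)}=|\beta|^{2/(\ell+2)}(x+\overline{z})/\big((x-i\alpha)(\overline{z}+i\alpha)\big)$ in the second, after which the factors $\overline{f(z)}f(x)$ cancel the spurious powers. Combining this with Proposition \ref{W*Kz-prop} as in the necessity part yields $W_{f,g,\max}W_{f,g,\max}^{*}K_{z}=K_{z}$ for all $z$. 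Because the linear span of $\{K_{z}:z\in\hlp\}$ is dense in $\spc$ (a vector orthogonal to every $K_{z}$ vanishes identically, by the reproducing property) and $W_{f,g,\max}$ is bounded, it follows that $W_{f,g,\max}W_{f,g,\max}^{*}=I$. Thus $W_{f,g,\max}^{*}$ is an isometry with closed range; moreover $W_{f,g,\max}$ is injective, since $f$ is zero-free and $g$ nonconstant, so $f\cdot(h\circ g)=0$ forces $h$ to vanish on the open set $g(\hlp)$, hence everywhere. Therefore $\text{ran}(W_{f,g,\max}^{*})=(\ker W_{f,g,\max})^{\perp}=\spc$, so $W_{f,g,\max}^{*}$ is a surjective isometry, i.e.\ unitary, and so is $W_{f,g,\max}=(W_{f,g,\max}^{*})^{*}$.

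\emph{Expected obstacle.} Everything but one step is soft: the only genuine calculation is the kernel identity \eqref{as-uni} for the Möbius symbols \eqref{form-uni}, where one must carefully expand $(g(x)+\overline{g(z)})^{\ell+2}$ and watch the cancellation of $(x-i\alpha)^{\ell+2}$ and $(\overline{z}+i\alpha)^{\ell+2}$ against the weight $f$; the conceptual move is to notice that, once boundedness and \eqref{as-uni} are in hand, trivial kernel (from $f\neq0$, $g$ nonconstant) is all that is needed to promote a coisometry to a unitary.
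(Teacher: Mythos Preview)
Your proof is correct and follows the paper's route: necessity via Lemma \ref{lem-uni} applied to $W_{f,g,\max}W_{f,g,\max}^{*}K_z=K_z$, and sufficiency via boundedness (Lemma \ref{lem-bdd-psi-phi} for the M\"obius symbols, a direct change of variables for the affine ones) together with the kernel identity \eqref{as-uni}.

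The only point of divergence is the last step of the sufficiency. The paper claims by ``a simple computation'' that \emph{both} $W_{f,g,\max}W_{f,g,\max}^{*}K_z=K_z$ and $W_{f,g,\max}^{*}W_{f,g,\max}K_z=K_z$ hold; the second identity works because, for each of the two symbol families, $W_{f,g,\max}K_z$ is itself a scalar multiple of a kernel function, so Proposition \ref{W*Kz-prop} can be applied a second time. You instead verify only the coisometry relation $W_{f,g,\max}W_{f,g,\max}^{*}=I$ and then upgrade it to unitarity by the soft observation that $f$ is zero-free and $g$ nonconstant, so $W_{f,g,\max}$ is injective and $\text{ran}(W_{f,g,\max}^{*})=(\ker W_{f,g,\max})^{\perp}=\spc$. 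Both closures are legitimate: yours trades a second explicit kernel calculation for a short functional-analytic argument, while the paper's is more symmetric but relies on spotting that $W_{f,g,\max}K_z$ lands back among the kernels. Incidentally, your change-of-variables computation in the affine case already shows $\norm{W_{f,g,\max}h}=\norm{h}$, i.e.\ $W_{f,g,\max}^{*}W_{f,g,\max}=I$, so for that family the injectivity detour is not even needed.
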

\begin{proof}
The necessary condition can be obtained from Lemma \ref{lem-uni}. Let us continue to prove the sufficient condition. Suppose that the symbols $\xi(\cdot)$ and $\phi(\cdot)$ take forms as in the statement of the theorem. By Lemma \ref{lem-bdd-psi-phi}, the operator $W_{f,g,\max}$ is bounded. A simple computation gives
\begin{gather*}
    W_{f,g,\max}W_{f,g,\max}^*K_z=W_{f,g,\max}^*W_{f,g,\max}K_z=K_z,\quad\forall z\in\hlp,
\end{gather*}
and so the operator $W_{f,g,\max}$ is unitary on $\spc$.
\end{proof}

\section{$\calc_{\bfa}$-selfadjoint property}\label{sec-Ca-self}
Let us define the operator $\calc_{\bfa}$ by setting
\begin{gather}
    \calc_{\bfa} f(z)=\overline{f(\overline{z}+i\bfa)},\quad\bfa\in\R.
\end{gather}

\begin{lem}\label{lem-conj-Ca}
For every $\bfa\in\R$, the operator $\calc_{\bfa}$ is a conjugation on $\spc$ and it acts on kernel functions by the following rule
\begin{gather}\label{eq-CaKz=}
    \calc_{\bfa}K_z=K_{\overline{z}+i\bfa},\quad\forall z\in\hlp.
\end{gather}
\end{lem}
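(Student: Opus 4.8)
The plan is to verify the three defining properties of a conjugation — namely that $\calc_{\bfa}$ is antilinear, isometric, and involutive — together with the action \eqref{eq-CaKz=} on kernel functions. First I would check that $\calc_{\bfa}$ maps $\spc$ into itself and is antilinear: if $h$ is analytic on $\hlp$, then $z\mapsto\overline{h(\overline{z}+i\bfa)}$ is again analytic on $\hlp$ (the map $z\mapsto\overline{z}+i\bfa$ sends $\hlp$ to $\hlp$ since it preserves real parts, and composing an analytic function with $z\mapsto\overline{z}$ followed by conjugation restores analyticity), and antilinearity is immediate from the outer complex conjugation. For the isometry, I would substitute into the norm defining $\spc$: writing $z=x+iy$ and using the change of variable $(x,y)\mapsto(x,-y+\bfa)$, which has Jacobian $1$ and preserves the factor $x^\ell$ (since the real part $x$ is unchanged), one gets
\begin{gather*}
\|\calc_{\bfa}h\|^2=\dfrac{1}{\pi}\int_{-\infty}^\infty\int_0^\infty x^\ell|h(x-iy+i\bfa)|^2\,dxdy=\|h\|^2.
\end{gather*}
The involution property is a direct computation: $\calc_{\bfa}^2h(z)=\overline{\calc_{\bfa}h(\overline{z}+i\bfa)}=\overline{\overline{h(\overline{(\overline{z}+i\bfa)}+i\bfa)}}=h(z+i\bfa-i\bfa)=h(z)$, using $\overline{\overline{z}+i\bfa}=z-i\bfa$.

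It remains to establish \eqref{eq-CaKz=}. I would compute directly from the formula for the reproducing kernel: for $x\in\hlp$,
\begin{gather*}
\calc_{\bfa}K_z(x)=\overline{K_z(\overline{x}+i\bfa)}=\overline{\dfrac{2^\ell(1+\ell)}{(\overline{x}+i\bfa+\overline{z})^{\ell+2}}}=\dfrac{2^\ell(1+\ell)}{(x-i\bfa+z)^{\ell+2}}=\dfrac{2^\ell(1+\ell)}{(x+\overline{(\overline{z}+i\bfa)})^{\ell+2}}=K_{\overline{z}+i\bfa}(x),
\end{gather*}
where I used $\ell\in\Z_{\geq 0}$ so that the conjugate of a power is the power of the conjugate, and $\overline{\overline{z}+i\bfa}=z-i\bfa$. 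Note that $\overline{z}+i\bfa\in\hlp$ so $K_{\overline{z}+i\bfa}$ is indeed a legitimate kernel function.

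I do not expect a genuine obstacle here: every step is a routine verification. The only points requiring a word of care are (i) confirming analyticity of $\calc_{\bfa}h$ — the cleanest phrasing is that $\calc_{\bfa}h = \overline{\,\cdot\,}\circ h\circ\sigma$ where $\sigma(z)=\overline{z}+i\bfa$ is an anti-holomorphic self-map of $\hlp$, so the composition of two anti-holomorphic operations is holomorphic — and (ii) making sure the change of variables in the isometry computation is applied on the correct variable (the imaginary part $y$, not $x$), so that the weight $x^\ell$ and the half-line of integration $x>0$ are untouched. Both are standard, so the lemma follows.
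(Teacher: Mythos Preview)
Your proposal is correct and complete. The paper does not actually supply a proof for this lemma (nor for the analogous Lemma~\ref{lem-conj-Cstar}, which it explicitly leaves to the reader as a direct computation), so your step-by-step verification of antilinearity, isometry, involutivity, and the kernel action is precisely the routine check the authors had in mind.
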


In the section, we describe the weighted composition operators which are complex symmetric with respect to conjugation $\calc_{\bfa}$ (simply: \emph{$\calc_{\bfa}$-selfadjoint}).
\begin{lem}\label{lem-cs-Ca}
Let $\ell\in\Z_{\geq 0}$ and $\bfa\in\R$. Suppose that $f:\hlp\to\C$ and $g:\hlp\to\hlp$ are analytic functions with the property
\begin{gather}\label{cond-cs-Ca-ok}
    f(\overline{z}+i\bfa)K_{\overline{g(\overline{z}+i\bfa)}+i\bfa}(w)
    =f(w)K_z(g(w)),\quad\forall z,w\in\hlp.
\end{gather}
Then there are three cases of the functions $f(\cdot)$ and $g(\cdot)$.
\begin{enumerate}
    \item The first case is
\begin{gather}\label{form-cs-Ca-1-I}
    g(w)=\mu,\quad f(w)=\dfrac{\delta}{(w+\mu-i\bfa)^{\ell+2}},
\end{gather}
where coefficients satisfy
\begin{gather}\label{cond-cs-Ca-I}
    \mu\in\hlp,\quad\delta\in\C.
\end{gather}
    \item The second case is
\begin{gather}\label{form-cs-Ca-1-II}
    g(w)=w+\gamma,\quad f(w)=\lambda,
\end{gather}
where coefficients satisfy
\begin{gather}\label{cond-cs-Ca-II}
    \gamma\in\C_{\mathbf{Re}\geq 0},\quad\lambda\in\C.
\end{gather}
    \item The third case is
\begin{gather}\label{form-cs-Ca-1}
    g(w)=-\dfrac{\be}{\alpha}-\dfrac{2}{\alpha(w-i\bfa)-\be},
    \quad f(w)=\dfrac{\delta}{(\alpha(w-i\bfa)-\be)^{\ell+2}},
\end{gather}
where coefficients satisfy
\begin{gather}\label{cond-self-map-cs-Ca}
\alpha\in\C\setminus\{0\},\quad\be,\delta\in\C,\quad
    \begin{cases}
    \text{either $\re\dfrac{\be}{\alpha}=\im\dfrac{1}{\alpha}=0,\re\dfrac{1}{\alpha}<0$},\\
    \\
    \text{or $\re\dfrac{\be}{\alpha}<0,\left(\re\dfrac{\be}{\alpha}\right)^2\geq\re\dfrac{1}{\alpha}+\dfrac{1}{|\alpha|}$}.
    \end{cases}
\end{gather}    
\end{enumerate}
\end{lem}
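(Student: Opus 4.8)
The plan is to mirror exactly the structure of the proof of Lemma~\ref{lem-her}, since the hypothesis \eqref{cond-cs-Ca-ok} is the $\calc_{\bfa}$-twisted analogue of \eqref{cond-her-0}. First I would rewrite \eqref{cond-cs-Ca-ok} using the explicit form of the kernel $K_z(w) = 2^\ell(1+\ell)/(w+\overline z)^{\ell+2}$, turning it into the functional equation
\begin{gather*}
    f(\overline z + i\bfa)\,(w + g(\overline z + i\bfa)^- + i\bfa^-)^{\ell+2}\ \text{vs.}\ f(w)\,(w + \overline{g(w)})^{\ell+2},
\end{gather*}
more precisely an identity of the shape
\begin{gather*}
    \frac{f(w)}{f(\overline z + i\bfa)} = \left(\frac{\overline{g(\overline z + i\bfa)} - i\bfa + \overline z}{\text{(denominator)}}\right)^{\ell+2}.
\end{gather*}
As in Lemma~\ref{lem-her} this already forces $f(\cdot)$ to be zero-free. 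Introducing the substitution $\zeta = \overline z + i\bfa$ (so that $\overline\zeta = z - i\bfa$) should bring the equation into a form structurally identical to \eqref{eq-02-26-939}, after which the same two differentiations ($\partial_w$, then $\partial_{\overline z}$) yield the analogue of \eqref{eq-add-02-25}, a second-order relation tying $g'$ at two points.

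Next I would split on whether $g'' \equiv 0$. In the affine case $g(w) = uw + v$: self-mapping of $\hlp$ forces $u \in \R_{>0}, v \in \C_{\mathbf{Re}\ge 0}$ or $u = 0, v \in \hlp$; substituting into the twisted relation and matching coefficients of $w$ and $\overline z$ should give $u = 1$ in the non-degenerate branch (leaving $\gamma = v$ free in $\C_{\mathbf{Re}\ge 0}$ now — unlike the hermitian case where reality of $v$ was forced — because the conjugation absorbs the imaginary part), producing \eqref{form-cs-Ca-1-II}--\eqref{cond-cs-Ca-II}; the degenerate branch $u=0$ gives the constant-$g$ case \eqref{form-cs-Ca-1-I}--\eqref{cond-cs-Ca-I} after observing that $f(\cdot)(\cdot + \mu - i\bfa)^{\ell+2}$ must be constant (not necessarily real, again because $\calc_{\bfa}$ handles conjugation). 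In the case $g'' \not\equiv 0$, I would follow the chain verbatim: set $h = g''/g'$, differentiate to get $h' = \alpha g'$ hence $h = \alpha g + \be$, substitute back to find $g$ is linear fractional, $g(w) = (-\be w + \gamma)/(\alpha w - \overline{\be})$-type — but here, crucially, $\alpha$ will be a complex constant rather than real, because the twist by $i\bfa$ breaks the reality constraint; ruling out $\alpha = \be = 0$ and handling $\alpha = 0 \ne \be$ (which collapses to case II) leaves $\alpha \ne 0$, where the normalization $g''/g' = -2\alpha/(\alpha w - \overline\be)$ forces $\gamma - |\be|^2/\alpha = -2$, giving $g(w) = -\be/\alpha - 2/(\alpha(w - i\bfa) - \be)$ after re-centering by $i\bfa$, and then \eqref{cond-cs-Ca-ok} reduces to the statement that $(\alpha(\cdot - i\bfa) - \be)^{\ell+2} f(\cdot)$ is a constant, yielding \eqref{form-cs-Ca-1}.

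Finally, the self-mapping condition \eqref{cond-self-map-cs-Ca} should come directly from Lemma~\ref{lem-self-map} applied to $\phi(z) = g(z)$ written in the form $-p - q/(z-u)$ with $p = \be/\alpha$, $q = 2/\alpha$, $u = i\bfa$: plugging $\re u = 0$ into \eqref{cond-self-mapping} and translating $\re q, |q|, \re p$ into the stated inequalities in $\re(\be/\alpha)$, $\re(1/\alpha)$, $|\alpha|$ gives precisely the two alternatives displayed. I expect the main obstacle to be bookkeeping the complex (rather than real) constant $\alpha$ carefully through the coefficient-matching steps — in Lemma~\ref{lem-her} reality of $\alpha$ dropped out of an identity like $h'(w)\overline{g'(z)} = g'(w)\overline{h'(z)}$, but with the $\calc_{\bfa}$-twist the corresponding identity will instead be $h'(w)\overline{g'(z)}$ paired against $g'(w)$ times the $\overline z$-derivative of something evaluated at $\overline z + i\bfa$, so one must track whether it forces $\alpha \in \R$ or only $\alpha \in \C$; verifying that it is genuinely the latter (consistent with the statement) and that no hidden constraint on $\im\alpha$ survives is the delicate point. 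The rest is routine differentiation and algebra parallel to the already-proven Lemma~\ref{lem-her}.
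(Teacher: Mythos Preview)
Your plan is correct and matches the paper's argument step for step: rewrite \eqref{cond-cs-Ca-ok} via the kernel, substitute away the conjugate (the paper simply renames $\overline z\mapsto z$ to obtain the purely holomorphic identity $f(z+i\bfa)(g(w)+z)^{\ell+2}=f(w)(w+g(z+i\bfa)-i\bfa)^{\ell+2}$), differentiate in $w$ and then in $z$, and split on $g''\equiv 0$ exactly as you describe.

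Three bookkeeping corrections, all stemming from copying Lemma~\ref{lem-her} too literally. First, after the substitution there are \emph{no} conjugates left anywhere, so the identity replacing $h'(w)\overline{g'(z)}=g'(w)\overline{h'(z)}$ is simply $h'(w)g'(z+i\bfa)=h'(z+i\bfa)g'(w)$; this is why $\alpha,\be\in\C$ with no reality constraint, and consequently the linear-fractional form has $\be$ (not $\overline\be$) in the denominator and the normalization reads $\gamma-\be^2/\alpha=-2$ (not $|\be|^2/\alpha$). Second, in the self-map check you wrote $u=i\bfa$, but rewriting $g(w)=-\be/\alpha-(2/\alpha)\big/(w-i\bfa-\be/\alpha)$ shows that in the notation of Lemma~\ref{lem-self-map} one has $p=\be/\alpha$, $q=2/\alpha$, $u=i\bfa+\be/\alpha$, hence $\re u=\re(\be/\alpha)$; feeding this into \eqref{cond-self-mapping} is what produces \eqref{cond-self-map-cs-Ca}. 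Third, in the $\alpha=0\ne\be$ subcase the resulting $g$ is affine, contradicting $g''\not\equiv 0$ directly (it does not ``collapse to case II'' via a separate argument).
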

\begin{proof}
Equality \eqref{cond-cs-Ca-ok} can be rewritten as
\begin{gather}\label{cond-cs-Ca-2}
    f(z+i\bfa)[g(w)+z]^{\ell+2}
    =f(w)[w+g(z+i\bfa)-i\bfa]^{\ell+2},\quad\forall z,w\in\hlp,
\end{gather}
which implies that the function $f(\cdot)$ never vanishes in $\hlp$ and moreover
\begin{gather}\label{eq-02-26-1347}
    \dfrac{f(w)}{f(z+i\bfa)}
    =\left(\dfrac{g(w)+z}{w+g(z+i\bfa)-i\bfa}\right)^{\ell+2},\quad\forall z,w\in\hlp.
\end{gather}
Let us continue to take derivative $\pa_w$ and get
\begin{gather}
    \dfrac{f'(w)}{f(z+i\bfa)}
    \nonumber=(\ell+2)\left(\dfrac{g(w)+z}{w+g(z+i\bfa)-i\bfa}\right)^{\ell+1}\\
    \label{eq-02-26-1350}\times\dfrac{g'(w)(w+g(z+i\bfa)-i\bfa)-(g(w)+z)}{(w+g(z+i\bfa)-i\bfa)^2},\quad\forall z,w\in\hlp.
\end{gather}
It follows from \eqref{eq-02-26-1347} and \eqref{eq-02-26-1350} that
\begin{gather*}
    \dfrac{f'(w)}{f(w)}
    =(\ell+2)\left(\dfrac{g'(w)}{g(w)+z}-\dfrac{1}{w+g(z+i\bfa)-i\bfa}\right),\quad\forall z,w\in\hlp.
\end{gather*}
After differentiating with respect to the variable $z$, the following is obtained
\begin{gather}\label{eq-add-02-25-15h}
    (w+g(z+i\bfa)-i\bfa)^2g'(w)
    =(g(w)+z)^2g'(z+i\bfa),\quad\forall z,w\in\hlp.
\end{gather}
Consider two cases as follows.

\textbf{Case 1:} $g''(\cdot)\equiv 0$. Then $g(z)=uz+v$, where $u,v$ are complex constants. Note that
\begin{gather*}
    \text{$g(\cdot)$ is a self-mapping of $\hlp$}\quad\Longleftrightarrow\quad 
    \begin{cases}
    \text{either $u\in\R_{>0},v\in\C_{\mathbf{Re}\geq 0}$},\\
    \\
    \text{or $u=0,v\in\hlp$}.
    \end{cases}
\end{gather*}
If $u=0,v\in\hlp$, then $g(w)=v$ and so \eqref{cond-cs-Ca-2} turns into
\begin{gather*}
    f(z+i\bfa)(z+v)^{\ell+2}
    =f(w)(w+v-i\bfa)^{\ell+2},\quad\forall z,w\in\hlp.
\end{gather*}
This means that $f(\cdot)(\cdot+v-i\bfa)^{\ell+2}$ is a complex-valued constant function and we get \eqref{form-cs-Ca-1-I}-\eqref{cond-cs-Ca-I}. When $u\in\R_{>0},v\in\C_{\mathbf{Re}\geq 0}$, equation \eqref{eq-add-02-25-15h} becomes
\begin{gather*}
    (w+u(z+i\bfa)+v-i\bfa)^2u
    =(uw+v+z)^2u,\quad\forall z,w\in\hlp.
\end{gather*}
Equating coefficients of $w,z$, we obtain $u=1$ and then $g(w)=w+v$. Setting $g(w)=w+v$ in \eqref{cond-cs-Ca-2}, we find that $f(\cdot)$ is a complex-valued constant function and \eqref{form-cs-Ca-1-II}-\eqref{cond-cs-Ca-II} hold.

\textbf{Case 2:} $g''(\cdot)\not\equiv 0$. In this case, taking derivative $\pa_z\circ\pa_w$ on both sides of \eqref{eq-add-02-25-15h} we observe
\begin{gather*}
    g''(w)g'(z+i\bfa)(w+g(z+i\bfa)-i\bfa)\\
    =g'(w)g''(z+i\bfa)(g(w)+z),\quad\forall z,w\in\hlp.
\end{gather*}
Setting
\begin{gather*}
    h(w)=\dfrac{g''(w)}{g'(w)},
\end{gather*}
the following is obtained
\begin{gather}\label{cond-cs-Ca-3}
    h(w)(w+g(z+i\bfa)-i\bfa)
    =h(z+i\bfa)(g(w)+z),\quad\forall z,w\in\hlp.
\end{gather}
We go forward taking derivative $\pa_z\circ\pa_w$
\begin{gather*}
    h'(w)g'(z+i\bfa)=h'(z+i\bfa)g'(w),\quad\forall z,w\in\hlp.
\end{gather*}
Hence, there exist constants $\alpha,\be\in\C$ such that
\begin{gather*}
    h'(w)=\alpha g'(w)\Longrightarrow h(w)=\alpha g(w)+\be.
\end{gather*}
For such $h(\cdot),g(\cdot)$, equation \eqref{cond-cs-Ca-3} is simplified to the following
\begin{gather*}
    \alpha g(w)(w-i\bfa)+\be(w-i\bfa)+\be g(z+i\bfa)\\
    =\alpha zg(z+i\bfa)+\be g(w)+\be z,\quad\forall z,w\in\hlp
\end{gather*}
or equivalently to saying that
\begin{gather*}
    \alpha g(w)(w-i\bfa)+\be(w-i\bfa)-\be g(w)\\
    =\alpha g(z+i\bfa)z+\be z-\be g(z+i\bfa),\quad\forall z,w\in\hlp.
\end{gather*}
Accordingly, there exists a constant $\gamma\in\C$ such that
\begin{gather*}
    g(w)=\dfrac{\gamma-\be(w-i\bfa)}{\alpha(w-i\bfa)-\be}.
\end{gather*}

- If $\alpha=\be=0$, then $h(\cdot)\equiv 0$ and so $g''(\cdot)\equiv 0$; but this is simpossible.

- If $\alpha=0,\be\ne 0$, then
\begin{gather*}
    g(w)=w-i\bfa-\dfrac{\gamma}{\be};
\end{gather*}
but this is impossible as $g''(\cdot)\not\equiv 0$.

- If $\alpha\ne 0$, then
\begin{gather*}
    g(w)=-\dfrac{\be}{\alpha}
    +\left(\gamma-\dfrac{\be^2}{\alpha}\right)\dfrac{1}{\alpha(w-i\bfa)-\be}.
\end{gather*}
A direct computation shows
\begin{gather*}
    -\dfrac{2\alpha}{\alpha(w-i\bfa)-\be}=\dfrac{g''(w)}{g'(w)}
    =h(w)=\alpha g(w)+\be=\dfrac{\alpha\gamma-\be^2}{\alpha(w-i\bfa)-\be},
\end{gather*}
which gives
\begin{gather*}
    -2\alpha=\alpha\gamma-\be^2\quad\Longleftrightarrow\quad -2=\gamma-\dfrac{\be^2}{\alpha}.
\end{gather*}
For that reason, we get
\begin{gather*}
    g(w)=-\dfrac{\be}{\alpha}
    -\dfrac{2}{\alpha(w-i\bfa)-\be},
\end{gather*}
and hence \eqref{cond-cs-Ca-2} becomes
\begin{gather*}
    \dfrac{1}{(\alpha w-\be-i\alpha\bfa)^{\ell+2}}f(z+i\bfa)
    =\dfrac{1}{(\alpha z-\be)^{\ell+2}}f(w),\quad\forall z,w\in\hlp.
\end{gather*}
This means that $f(\cdot)(\alpha\cdot-\be-i\alpha\bfa)^{\ell+2}$ is a complex-valued constant function; namely, this case gives \eqref{form-cs-Ca-1}. Note that \eqref{cond-self-map-cs-Ca} follows directly from Lemma \ref{lem-bdd-psi-phi}.
\end{proof}

\begin{thm}\label{thm-Ca-1}
Let $\ell\in\Z_{\geq 0},\bfa\in\R$. Let $f:\hlp\to\C$ and $g:\hlp\to\hlp$ be analytic functions. Then the operator $W_{f,g,\max}$ is $\calc_\bfa$-selfadjoint on $\spc$ if and only if \eqref{form-cs-Ca-1-I}-\eqref{cond-self-map-cs-Ca} hold. In this case, the operator $W_{f,g,\max}$ is bounded.
\end{thm}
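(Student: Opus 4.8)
The plan is to follow the scheme of the proof of Theorem~\ref{thm-her-1}, with the hermitian identity replaced by the $\calc_\bfa$-selfadjointness identity and Lemma~\ref{lem-cs-Ca} used in place of Lemma~\ref{lem-her}. Recall that $W_{f,g,\max}$ being $\calc_\bfa$-selfadjoint means exactly $W_{f,g,\max}=\calc_\bfa W_{f,g,\max}^*\calc_\bfa$, where $\calc_\bfa$ is the conjugation of Lemma~\ref{lem-conj-Ca}.

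\emph{Necessity.} Assume $W_{f,g,\max}=\calc_\bfa W_{f,g,\max}^*\calc_\bfa$. Fix $z\in\hlp$. By Proposition~\ref{W*Kz-prop}, $K_z\in\text{dom}(W_{f,g,\max}^*)$, and by \eqref{eq-CaKz=} also $\calc_\bfa K_z=K_{\overline z+i\bfa}\in\text{dom}(W_{f,g,\max}^*)$; hence $K_z\in\text{dom}(\calc_\bfa W_{f,g,\max}^*\calc_\bfa)=\text{dom}(W_{f,g,\max})$, and, using Proposition~\ref{W*Kz-prop}, conjugate-linearity of $\calc_\bfa$ and \eqref{eq-CaKz=} once more,
\[
W_{f,g,\max}K_z=\calc_\bfa W_{f,g,\max}^*K_{\overline z+i\bfa}
=\calc_\bfa\bigl(\overline{f(\overline z+i\bfa)}\,K_{g(\overline z+i\bfa)}\bigr)
=f(\overline z+i\bfa)\,K_{\overline{g(\overline z+i\bfa)}+i\bfa}.
\]
Evaluating both sides at an arbitrary $w\in\hlp$ and recalling $W_{f,g,\max}K_z(w)=f(w)K_z(g(w))$ gives precisely \eqref{cond-cs-Ca-ok}. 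Lemma~\ref{lem-cs-Ca} then forces $f,g$ into one of the forms \eqref{form-cs-Ca-1-I}--\eqref{cond-self-map-cs-Ca}.

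\emph{Sufficiency and boundedness.} Suppose $f,g$ take one of the three forms. First I would prove boundedness. In the third case, write $\alpha(w-i\bfa)-\be=\alpha(w-u)$ with $u=i\bfa+\be/\alpha$, so that $g(w)=-p-q/(w-u)$ with $p=\be/\alpha$, $q=2/\alpha$, and $f$ is a nonzero scalar multiple of $\psi(w)=(w-u)^{-(\ell+2)}$; a short check shows that the two alternatives in \eqref{cond-self-map-cs-Ca} are exactly the two alternatives of \eqref{cond-self-mapping} for $(p,q,u)$, so Lemma~\ref{lem-bdd-psi-phi} applies and $W_{f,g,\max}$ is bounded; the first case is the degenerate sub-case $q=0$ of the same computation. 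In the second case $W_{f,g,\max}=\lambda C_{g,\max}$ with $g(w)=w+\gamma$, $\re\gamma\geq0$, which is a contraction on $\spc$ by a direct change of variables (cf.\ Remark~\ref{rem-202103051127}). Now, both $W_{f,g,\max}$ and $\calc_\bfa W_{f,g,\max}^*\calc_\bfa$ being bounded, it suffices to check they agree on the kernel functions $K_z$, whose span is dense; that agreement is exactly \eqref{cond-cs-Ca-ok}, which a direct computation confirms in each of the three cases (for instance, in the second case both sides reduce to $\lambda\,2^\ell(1+\ell)(w+\overline z+\gamma)^{-(\ell+2)}$ after the identification $\overline{g(\overline z+i\bfa)}+i\bfa=z+\overline\gamma$). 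Hence $W_{f,g,\max}=\calc_\bfa W_{f,g,\max}^*\calc_\bfa$, i.e.\ $W_{f,g,\max}$ is $\calc_\bfa$-selfadjoint and bounded.

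The step I expect to be the most delicate is bookkeeping rather than analysis: on the necessity side, making sure the domain inclusions genuinely let the operator identity descend to the kernel identity \eqref{cond-cs-Ca-ok}; and on the sufficiency side, matching the coefficient constraints \eqref{cond-self-map-cs-Ca} with the hypotheses \eqref{cond-self-mapping} of Lemma~\ref{lem-bdd-psi-phi} under the substitution $(p,q,u)=(\be/\alpha,\,2/\alpha,\,i\bfa+\be/\alpha)$, together with the separate elementary boundedness check for the translation symbol in the second case.
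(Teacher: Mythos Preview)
Your proposal is correct and follows essentially the same route as the paper: necessity via applying the $\calc_\bfa$-selfadjoint identity to kernel functions (using Proposition~\ref{W*Kz-prop} and Lemma~\ref{lem-conj-Ca}) to obtain \eqref{cond-cs-Ca-ok} and then invoking Lemma~\ref{lem-cs-Ca}; sufficiency via boundedness from Lemma~\ref{lem-bdd-psi-phi} and Remark~\ref{rem-202103051127} together with the kernel identity. You are simply more explicit than the paper about the domain bookkeeping and about the substitution $(p,q,u)=(\be/\alpha,\,2/\alpha,\,i\bfa+\be/\alpha)$ matching \eqref{cond-self-map-cs-Ca} to \eqref{cond-self-mapping}, which the paper leaves implicit.
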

\begin{proof}
Suppose that the operator $W_{f,g,\max}$ is $\calc_\bfa$-selfadjoint, which means $\calc_\bfa W_{f,g,\max}^*\calc_\bfa=W_{f,g,\max}$. In particular, we have
\begin{gather}\label{eq-CaW*Ca=W}
    \calc_\bfa W_{f,g,\max}^*\calc_\bfa K_z=W_{f,g,\max}K_z,\quad\forall z\in\hlp.
\end{gather}
By Proposition \ref{W*Kz-prop} and Lemma \ref{lem-conj-Ca}, equation \eqref{eq-CaW*Ca=W} turns into \eqref{cond-cs-Ca-ok}. We make use of Lemma \ref{lem-cs-Ca} to get \eqref{form-cs-Ca-1-I}-\eqref{cond-self-map-cs-Ca}.

Conversely, take $f(\cdot)$ and $g(\cdot)$ as in the statement of the theorem. A direct computation gives \eqref{eq-CaW*Ca=W}, and hence by Lemma \ref{lem-bdd-psi-phi} and Remark \ref{rem-202103051127} the operator $W_{f,g,\max}$ is $\calc_\bfa$-selfadjoint on the whole space $\spc$. 
\end{proof}

\begin{thm}\label{thm-Ca-2}
Let $\ell\in\Z_{\geq 0},\bfa\in\R$. Let $f:\hlp\to\C$ and $g:\hlp\to\hlp$ be analytic functions. Then the operator $W_{f,g}$ is $\calc_\bfa$-selfadjoint on $\spc$ if and only if it verifies two conditions.
\begin{enumerate}
    \item The operator $W_{f,g}$ is maximal; that is $W_{f,g}=W_{f,g,\max}$.
    \item \eqref{form-cs-Ca-1-I}-\eqref{cond-self-map-cs-Ca} hold. 
\end{enumerate}
In this case, the operator $W_{f,g}$ is bounded.
\end{thm}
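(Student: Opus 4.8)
The plan is to follow the template of the proof of Theorem \ref{thm-her-2}, replacing the hermitian relation $W^\ast=W$ by the $\calc_\bfa$-selfadjoint relation $W=\calc_\bfa W^\ast\calc_\bfa$. The implication $\Longleftarrow$ is immediate from Theorem \ref{thm-Ca-1}: if $W_{f,g}$ is maximal and the symbols satisfy \eqref{form-cs-Ca-1-I}--\eqref{cond-self-map-cs-Ca}, then $W_{f,g}=W_{f,g,\max}$ is $\calc_\bfa$-selfadjoint and bounded. So all the content is in the forward direction, and the engine is an order-chain of operators.

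First I would combine the order-reversing property of the adjoint with the order-preserving property of conjugation by $\calc_\bfa$. Since $W_{f,g}\preceq W_{f,g,\max}$ and $\text{dom}(W_{f,g,\max})$ is dense (it contains every kernel function $K_z$, $z\in\hlp$, and these span a dense subspace of $\spc$), \cite[Proposition 1.6]{KS} gives $W_{f,g,\max}^\ast\preceq W_{f,g}^\ast$. The map $A\mapsto\calc_\bfa A\calc_\bfa$ preserves $\preceq$ because $\calc_\bfa$ is a bijective (conjugate-linear) isometric involution, so together with the hypothesis $W_{f,g}=\calc_\bfa W_{f,g}^\ast\calc_\bfa$ one obtains
\[
\calc_\bfa W_{f,g,\max}^\ast\calc_\bfa\ \preceq\ \calc_\bfa W_{f,g}^\ast\calc_\bfa\ =\ W_{f,g}\ \preceq\ W_{f,g,\max}.
\]
Next I would evaluate the two outer operators on kernel functions. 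By Lemma \ref{lem-conj-Ca}, $\calc_\bfa K_z=K_{\overline z+i\bfa}$, which by Proposition \ref{W*Kz-prop} lies in $\text{dom}(W_{f,g,\max}^\ast)$ with $W_{f,g,\max}^\ast K_{\overline z+i\bfa}=\overline{f(\overline z+i\bfa)}K_{g(\overline z+i\bfa)}$; applying $\calc_\bfa$ once more shows $K_z\in\text{dom}(\calc_\bfa W_{f,g,\max}^\ast\calc_\bfa)$ and $\calc_\bfa W_{f,g,\max}^\ast\calc_\bfa K_z=f(\overline z+i\bfa)K_{\overline{g(\overline z+i\bfa)}+i\bfa}$. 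Since the displayed chain forces this to equal $W_{f,g,\max}K_z$, and $W_{f,g,\max}K_z(w)=f(w)K_z(g(w))$, we recover precisely condition \eqref{cond-cs-Ca-ok}.

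From here the argument closes as for Theorem \ref{thm-her-2}. Lemma \ref{lem-cs-Ca} applied to \eqref{cond-cs-Ca-ok} yields \eqref{form-cs-Ca-1-I}--\eqref{cond-self-map-cs-Ca}, which is item (2); Theorem \ref{thm-Ca-1} then tells us $W_{f,g,\max}$ is itself $\calc_\bfa$-selfadjoint, i.e. $W_{f,g,\max}=\calc_\bfa W_{f,g,\max}^\ast\calc_\bfa$. Substituting this equality back into the chain collapses it to $W_{f,g}=W_{f,g,\max}$, giving item (1); and the boundedness assertion is inherited directly from Theorem \ref{thm-Ca-1}. The only delicate points — the "main obstacle", such as it is — are the bookkeeping items for unbounded operators: verifying that conjugation by $\calc_\bfa$ genuinely preserves $\preceq$ for operators that need not be bounded, checking that \cite[Proposition 1.6]{KS} applies (density of $\text{dom}(W_{f,g,\max})$), and confirming that both outer operators in the chain are actually defined on each $K_z$ so that the identification with \eqref{cond-cs-Ca-ok} is legitimate. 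None of these is deep, but each is a place where carelessness would break the argument.
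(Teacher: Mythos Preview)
Your proposal is correct and follows essentially the same approach as the paper: both arguments use the order chain obtained from $W_{f,g}\preceq W_{f,g,\max}$, \cite[Proposition 1.6]{KS}, and conjugation by $\calc_\bfa$, then evaluate on kernel functions to invoke Lemma \ref{lem-cs-Ca} and Theorem \ref{thm-Ca-1}, and finally collapse the chain to obtain maximality. Your write-up is in fact a bit more careful than the paper's (you explicitly check density of $\text{dom}(W_{f,g,\max})$ and verify that each $K_z$ lies in the domain of $\calc_\bfa W_{f,g,\max}^\ast\calc_\bfa$), but the skeleton is identical.
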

\begin{proof}
The implication $\Longleftarrow$ is obtained from Theorem \ref{thm-her-1}. Let us prove the implication $\Longrightarrow$. First, we show that the operator $W_{f,g,\max}$ is hermitian. Indeed, since $W_{f,g}\preceq W_{f,g,\max}$, by \cite[Proposition 1.6]{KS}, we have
$$
W_{f,g,\max}^*\preceq W_{f,g}^*=\calc_\bfa W_{f,g}\calc_\bfa\preceq \calc_\bfa W_{f,g,\max}\calc_\bfa.
$$
Proposition \ref{W*Kz-prop} shows that kernel functions always belong to the domain $\text{dom}(W_{f,g,\max}^*)$, and so,
$$
\calc_\bfa W_{f,g,\max}^*\calc_\bfa K_z(u)=W_{f,g,\max}K_z(u),\quad\forall z,u\in\hlp.
$$
By Lemma \ref{lem-cs-Ca}, conditions \eqref{form-cs-Ca-1-I}-\eqref{cond-self-map-cs-Ca} hold, and hence, by Theorem \ref{thm-Ca-1}, the operator $W_{f,g,\max}$ is $\calc_\bfa$-selfadjoint. For that reason, item (1) follows from the following inclusions
$$
W_{f,g}\preceq W_{f,g,\max}=\calc_\bfa W_{f,g,\max}^*\calc_\bfa\preceq \calc_\bfa W_{f,g}^*\calc_\bfa=W_{f,g}.
$$
\end{proof}

\section{$\calc_\star$-selfadjoint property}\label{sec-cs-final}
In Section \ref{sec-Ca-self}, we obtain the interesting fact that the function $g(w)=w+\gamma$ can induce a complex symmetric weighted composition operator on $\spc$. Naturally, we are in question about the case when $g(w)=\lambda w+\gamma$. As it turns out, that case also gives rise to the complex symmetry, but conjugations differ from those used in Section \ref{sec-Ca-self}. Our conjugations are constructed as follows.

Let us define the anti-linear operator $\calc_\star:\spc\to\spc$ by setting
\begin{gather}
    \calc_\star f(z)=\dfrac{1}{z^{\ell+2}}\overline{f\left(\dfrac{1}{\overline{z}}\right)}
\end{gather}

\begin{lem}\label{lem-conj-Cstar}
Let $\ell\in\Z_{\geq 0}$. The operator $\calc_\star$ is a conjugation on $\spc$ and it acts on kernel functions by the following rule
\begin{gather}\label{eq-CstarKz=}
    \calc_\star K_z=\dfrac{1}{z^{\ell+2}} K_{1/\overline{z}}, \quad\forall z\in\hlp.
\end{gather}
\end{lem}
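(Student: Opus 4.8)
The plan is to verify the three defining properties of a conjugation — anti-linearity, isometry, and involutivity — for $\calc_\star$, together with the action formula \eqref{eq-CstarKz=}; the latter will in fact be the most efficient route to the isometry claim. Anti-linearity is immediate from the formula, since complex conjugation is conjugate-linear and the factor $z^{-(\ell+2)}$ and the substitution $z\mapsto 1/\overline z$ are both $\C$-linear in the argument. For the action on kernel functions, I would simply compute: for fixed $z\in\hlp$ and variable $w\in\hlp$,
\begin{gather*}
\calc_\star K_z(w)=\dfrac{1}{w^{\ell+2}}\,\overline{K_z\!\left(\dfrac{1}{\overline w}\right)}
=\dfrac{1}{w^{\ell+2}}\cdot\dfrac{2^\ell(1+\ell)}{\left(\dfrac{1}{w}+\overline z\right)^{\ell+2}}
=\dfrac{2^\ell(1+\ell)}{(1+w\overline z)^{\ell+2}}
=\dfrac{1}{z^{\ell+2}}\cdot\dfrac{2^\ell(1+\ell)}{\left(w+\dfrac{1}{\overline z}\right)^{\ell+2}\!\!\!\!},
\end{gather*}
which is exactly $z^{-(\ell+2)}K_{1/\overline z}(w)$; one must note in passing that $1/\overline z\in\hlp$ whenever $z\in\hlp$, so $K_{1/\overline z}$ is a legitimate kernel function.

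Next I would establish that $\calc_\star$ is an involution. The cleanest argument is to apply $\calc_\star$ twice to a general $f\in\spc$:
\begin{gather*}
\calc_\star(\calc_\star f)(z)=\dfrac{1}{z^{\ell+2}}\,\overline{(\calc_\star f)\!\left(\dfrac{1}{\overline z}\right)}
=\dfrac{1}{z^{\ell+2}}\cdot\overline{\left(\dfrac{1}{(1/\overline z)^{\ell+2}}\,\overline{f\!\left(\dfrac{1}{\,\overline{1/\overline z}\,}\right)}\right)}
=\dfrac{1}{z^{\ell+2}}\cdot\overline z^{\,\ell+2}\cdot\dfrac{\cdots}{\cdots},
\end{gather*}
and after simplifying $\overline{1/\overline z}=1/z$ one obtains $\calc_\star^2 f=f$; I would write this out carefully but it is a routine cancellation. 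Since $\calc_\star^2=I$, the operator is injective and surjective, so to finish it remains only to check that it is norm-preserving. For this I would perform the change of variables $w=1/\overline\zeta$ (equivalently $\zeta=1/\overline w$) in the defining integral $\|h\|^2=\frac1\pi\iint_{\hlp}(\re w)^\ell|h(w)|^2\,dA(w)$: the map $w\mapsto 1/\overline w$ is a bijection of $\hlp$ onto itself whose real Jacobian is $|w|^{-4}$, and $\re(1/\overline w)=(\re w)/|w|^2$, so that $(\re(1/\overline w))^\ell$ contributes a factor $|w|^{-2\ell}$; combined with the factor $|1/w^{\ell+2}|^2=|w|^{-2\ell-4}$ coming from $|\calc_\star h(w)|^2=|w|^{-2\ell-4}|h(1/\overline w)|^2$, one checks the weights and Jacobian conspire to give $\|\calc_\star h\|=\|h\|$. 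Alternatively, and perhaps more slickly, one can deduce the isometry property from \eqref{eq-CstarKz=} together with anti-linearity: expanding $\inner{\calc_\star K_z}{\calc_\star K_w}$ via the formula and using $\inner{K_z}{K_w}=K_w(z)$ shows $\inner{\calc_\star K_z}{\calc_\star K_w}=\inner{K_w}{K_z}=\overline{\inner{K_z}{K_w}}$, which is the characteristic identity of a conjugation on the total set of kernel functions, and anti-linear boundedness then extends it to all of $\spc$.

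The main obstacle I anticipate is purely bookkeeping rather than conceptual: one must be scrupulous about the nested conjugations and reciprocals (e.g.\ $\overline{1/\overline z}=1/z$ versus $1/\overline{\,\overline z\,}$) when iterating $\calc_\star$, and in the change-of-variables computation one must correctly compute the real $2\times 2$ Jacobian of $w\mapsto 1/\overline w$ as a map $\R^2\to\R^2$ — it is orientation-reversing, so it is the absolute value $|w|^{-4}$ that enters — and correctly track how the weight $(\re w)^\ell$ transforms. Everything else is formal. I would therefore organize the write-up as: (i) the kernel computation giving \eqref{eq-CstarKz=}; (ii) anti-linearity by inspection; (iii) $\calc_\star^2=I$ by direct substitution; (iv) the isometry, either by the Jacobian change of variables or by the kernel-function identity argument, whichever the paper's conventions make shortest.
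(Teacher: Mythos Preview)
Your approach is correct and is precisely what the paper intends: its own proof reads in full ``The proof is left to the reader as it is a direct computation,'' so your organization --- kernel formula, anti-linearity, involutivity, isometry via change of variables (or via the kernel identity) --- is exactly the sort of verification envisaged.

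One genuine slip to fix: in your kernel computation you drop a conjugate. From
\[
\overline{K_z\!\left(\dfrac{1}{\overline w}\right)}
=\overline{\dfrac{2^\ell(1+\ell)}{(1/\overline w+\overline z)^{\ell+2}}}
=\dfrac{2^\ell(1+\ell)}{(1/w+z)^{\ell+2}},
\]
the second slot should be $z$, not $\overline z$; your displayed line has $\overline z$ there and hence $(1+w\overline z)^{\ell+2}$ in the next step, which does not match the target $z^{-(\ell+2)}K_{1/\overline z}(w)=2^\ell(1+\ell)/(1+wz)^{\ell+2}$. With $z$ in place of $\overline z$ the chain of equalities closes correctly. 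This is exactly the ``nested conjugations'' hazard you yourself flagged; once corrected, the remainder of your outline goes through without change.
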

\begin{proof}
    The proof is left to the reader as it is a direct computation.
\end{proof}

\begin{lem}\label{lem-Cstar}
Let $\ell\in\Z_{\geq 0}$. Suppose that $f:\hlp\to\C$ and $g:\hlp\to\hlp$ are analytic functions with the property
\begin{gather}\label{cond-Cstar-sym}
    \dfrac{1}{\overline{z}^{\ell+2}}f(1/\overline{z})
    \dfrac{1}{g(1/\overline{z})^{\ell+2}}K_{1/\overline{g(1/\overline{z})}}(w)
    =f(w)K_z(g(w)),\quad\forall z,w\in\hlp.
\end{gather}
Then there are three cases of the functions $f(\cdot)$ and $g(\cdot)$.
\begin{enumerate}
    \item The first case is
    \begin{gather}\label{form-Cstar-I}
        g(w)=\alpha,\quad f(w)=\dfrac{\be}{(1+\alpha z)^{\ell+2}},
    \end{gather}
    where coefficients satisfy
    \begin{gather}\label{cond-Cstar-I}
        \alpha\in\hlp,\quad\be\in\C.
    \end{gather}
    \item The second case is
    \begin{gather}\label{form-Cstar-II}
        g(w)=\lambda w,\quad f(w)=\theta, 
    \end{gather}
    where coefficients satisfy
    \begin{gather}\label{cond-Cstar-II}
        \lambda\in\R_{>0},\quad\theta\in\C.
    \end{gather}
    \item The third case is
    \begin{gather}\label{form-Cstar-III}
        g(w)=\delta+\dfrac{1-\delta\ell}{w+\kappa},\quad f(w)=\dfrac{r}{(w+\kappa)^{\ell+2}},
    \end{gather}
    where coefficients satisfy
    \begin{gather}\label{cond-Cstar-III}
    \begin{cases}
        \text{either $\delta\in i\R,\quad\delta\ell\in\R_{<1},\quad\kappa\in\C_{\mathbf{Re}\geq 0}$},\\
        \\
        \text{or $-\re\delta<0\leq\re\kappa-\dfrac{\re(\delta\kappa-1)+|\delta\kappa-1|}{2\re\delta}$}.
    \end{cases}
    \end{gather}
\end{enumerate}
\end{lem}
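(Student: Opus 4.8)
The plan is to follow the route of Lemmas~\ref{lem-her} and~\ref{lem-cs-Ca}, the only genuinely new feature being the inversion $z\mapsto 1/\overline z$ built into $\calc_\star$. First I would substitute $K_a(b)=2^\ell(1+\ell)(b+\overline a)^{-(\ell+2)}$ into \eqref{cond-Cstar-sym}; cancelling the constant and using $g(1/\overline z)^{\ell+2}\bigl(w+1/g(1/\overline z)\bigr)^{\ell+2}=\bigl(g(1/\overline z)w+1\bigr)^{\ell+2}$ turns the hypothesis into the equivalent identity
\begin{gather*}
f(1/\overline z)\,(g(w)+\overline z)^{\ell+2}=f(w)\,\overline z^{\,\ell+2}\bigl(g(1/\overline z)\,w+1\bigr)^{\ell+2},\qquad\forall z,w\in\hlp .
\end{gather*}
As in the earlier lemmas this forces $f$ to be zero-free, so we may divide, take the logarithmic derivative $\pa_w$, and then apply $\pa_{\overline z}$ (legitimate since $\overline z$ enters only anti-holomorphically, so $w$ and $\overline z$ may be treated as independent complex variables); this eliminates $f$ and leaves a functional equation in $g$ alone, the analogue of \eqref{eq-add-02-25}/\eqref{eq-add-02-25-15h}. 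It is convenient to record it using the auxiliary analytic function $\mathcal G(\zeta):=g(1/\zeta)$ on $\hlp$ (well defined since $1/\zeta\in\hlp$ for $\zeta\in\hlp$), which plays here the role that $\overline{g(z)}$ played there.

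Next I would split according to whether $g''\equiv 0$. If $g''\equiv 0$ then $g(w)=uw+v$, and $g$ maps $\hlp$ into itself precisely when either $u\in\R_{>0},v\in\C_{\mathbf{Re}\geq 0}$ or $u=0,v\in\hlp$. Substituting back into the displayed identity: in the first subcase a further application of $\pa_{\overline z}$ (equivalently, matching the coefficient of $w$) forces $v=0$, whence $f$ is constant and \eqref{form-Cstar-II}--\eqref{cond-Cstar-II} hold with $\lambda=u$; in the second subcase the identity says exactly that $(1+v\,\cdot)^{\ell+2}f(\cdot)$ is constant, which is \eqref{form-Cstar-I}--\eqref{cond-Cstar-I}.

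The case $g''\not\equiv 0$ is the heart of the matter. As before, set $h=g''/g'$; differentiating the $g$-only equation once more in $w$ and once in $\overline z$ and feeding back the relation already obtained, one shows $h'$ is proportional to $g'$, so $h=\alpha g+\be$ for constants $\alpha,\be$; this forces $g$ to be a non-affine linear fractional self-map, which after a harmless reparametrization reads $g(w)=\delta+\rho(w+\kappa)^{-1}$ with $\rho\neq 0$. Substituting this Möbius form back into the reformulated identity and clearing denominators makes both sides $(\ell+2)$-th powers of linear polynomials in $w$ whose coefficients depend on $\overline z$; equating them forces $f(w)=r(w+\kappa)^{-(\ell+2)}$ and pins the residue $\rho$ to the value recorded in \eqref{form-Cstar-III}. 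Finally, \eqref{cond-Cstar-III} is nothing but the translation, through Lemma~\ref{lem-self-map} applied to $g$ written in the form \eqref{eq-bdd-special} (so that $p=-\delta$, $q=-\rho$, $u=-\kappa$), of the requirement that $g(\hlp)\subseteq\hlp$.

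The main obstacle is the bookkeeping in this last case. Because $\calc_\star$ carries the inversion $z\mapsto 1/\overline z$ and not merely conjugation, the twisted symmetry that collapsed the analogous step in Lemmas~\ref{lem-her} and~\ref{lem-cs-Ca} is absent: one must drag $\mathcal G(\zeta)=g(1/\zeta)$ together with $\mathcal G'$ and $\mathcal G''$ through the whole differentiation chain before the $w$- and $\overline z$-dependences decouple, and obtaining the exact normalization of the residue in \eqref{form-Cstar-III} — together with checking that the two constant cases are genuinely attained — is where the computation is most delicate.
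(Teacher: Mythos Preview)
Your plan is workable but misses the single observation that makes this lemma \emph{easier}, not harder, than Lemmas~\ref{lem-her} and~\ref{lem-cs-Ca}. Since $\hlp$ is invariant under $z\mapsto 1/\overline z$, you may replace $1/\overline z$ throughout by a fresh holomorphic variable (call it $z$ again); your displayed identity then collapses to
\[
(1+zg(w))^{\ell+2}f(z)=(1+wg(z))^{\ell+2}f(w),\qquad z,w\in\hlp,
\]
which is holomorphic in both variables and perfectly symmetric under $z\leftrightarrow w$. So the ``twisted symmetry'' you claim is absent is in fact present and cleaner than in the earlier lemmas: the auxiliary $\mathcal G(\zeta)=g(1/\zeta)$ and all the dragging of $\mathcal G',\mathcal G''$ through the differentiation chain are unnecessary. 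This symmetric form is precisely what the paper works with from the outset.

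From there the paper also departs from your outline in the non-constant case. Instead of the $h=g''/g'$ machinery and the $g''\equiv 0$ split, after one $\pa_z$ and one $\pa_w$ it reaches
\[
\dfrac{g'(z)}{(1+wg(z))^2}=\dfrac{g'(w)}{(1+zg(w))^2},
\]
and then \emph{integrates} in $z$ with $w=w_\star$ fixed to read off directly that $g$ is linear fractional; the affine sub-case $g(w)=Ew+F$ appears afterwards as the degenerate denominator, and substituting back immediately forces $F=0$ (case~II) or gives the residue constraint $EG+F=1$ (case~III). Your differentiation route would ultimately arrive at the same relation---after the substitution above, your $g$-only equation reduces exactly to it---so the argument is not wrong, but the integration shortcut bypasses the two extra derivatives and the ``delicate bookkeeping'' you identify as the main obstacle.
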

\begin{proof}
Equation \eqref{cond-Cstar-sym} becomes
\begin{gather}\label{eq-02-26-2023}
    (1+zg(w))^{\ell+2}f(z)
    =(1+wg(z))^{\ell+2}f(w),\quad\forall z,w\in\hlp.
\end{gather}
Consider two cases as follows.

Case 1: $g(\cdot)\equiv g\in\hlp$ is a constant function. In this case, \eqref{eq-02-26-2023} reveals that $(1+g\cdot)^{\ell+2}f(\cdot)$ is a complex-valued constant function. Hence, we get \eqref{form-Cstar-I}-\eqref{cond-Cstar-I}.

Case 2: $g(\cdot)$ is not a constant function. It can be found from \eqref{eq-02-26-2023} that the function $f(\cdot)$ never vanishes in $\hlp$ and moreover
\begin{gather}\label{eq-02-26-410}
    \dfrac{f(z)}{f(w)}
    =\left(\dfrac{1+wg(z)}{1+zg(w)}\right)^{\ell+2},\quad\forall z,w\in\hlp.
\end{gather}
Differentiating with respect to the variable $z$, the following is obtained
\begin{gather}
    \dfrac{f'(z)}{f(w)}
    \nonumber=(\ell+2)\left(\dfrac{1+wg(z)}{1+zg(w)}\right)^{\ell+1}\\
    \label{eq-02-26-414}\times
    \dfrac{wg'(z)(1+zg(w))-(1+wg(z))g(w)}{(1+zg(w))^2},\quad\forall z,w\in\hlp.
\end{gather}
By \eqref{eq-02-26-410} and \eqref{eq-02-26-414}, we get
\begin{gather*}
    \dfrac{f'(z)}{f(z)}
    =(\ell+2)\left(\dfrac{wg'(z)}{1+wg(z)}
    -\dfrac{g(w)}{1+zg(w)}\right),\quad\forall z,w\in\hlp,
\end{gather*}
which implies, after taking $\pa_w$, that
\begin{gather}\label{eq-02-26-832}
\dfrac{g'(z)}{(1+wg(z))^2}
=\dfrac{g'(w)}{(1+zg(w))^2},\quad\forall z,w\in\hlp.
\end{gather}
For $w_\star\in\hlp$ with $g(w_\star)\ne 0$, we have
\begin{gather*}
    %\dfrac{1}{w_\star}\left(-\dfrac{1}{1+w_\star g(z)}\Bigg|_{z=w_\star}^{z=x}\right)
    \dfrac{1}{w_\star}\left(-\dfrac{1}{1+w_\star g(x)}+\dfrac{1}{1+w_\star g(w_\star)}\right)
    =\int\limits_{w_\star}^x\dfrac{g'(z)dz}{(1+w_\star g(z))^2}\\
    =\int\limits_{w_\star}^x\dfrac{g'(w_\star)dz}{(1+zg(w_\star))^2}
    %=-\dfrac{g'(w_\star)}{g(w_\star)}\left(\dfrac{1}{1+zg(w_\star)}\Bigg|_{z=w_\star}^{z=x}\right);
    =-\dfrac{g'(w_\star)}{g(w_\star)}\left(\dfrac{1}{1+xg(w_\star)}-\dfrac{1}{1+w_\star g(w_\star)}\right);
\end{gather*}
meaning that $g(\cdot)$ is a linear fractional self-map of $\hlp$. For that reason, we suppose
\begin{gather*}
    g(w)=\dfrac{Aw+B}{Cw+D},
\end{gather*}
where $A,B,C,D$ are complex coefficients.

- If $C=0$, then
\begin{gather*}
    g(w)=Ew+F,\quad\text{where $E=\dfrac{A}{D}$ and $F=\dfrac{B}{D}$}.
\end{gather*}
Through setting $g(w)=Ew+F$ in \eqref{eq-02-26-832} and then equating coefficients, we observe $F=0$ and according to \eqref{eq-02-26-2023} $f(\cdot)$ must be a complex-valued constant function. Hence, we get \eqref{form-Cstar-II}-\eqref{cond-Cstar-II}.

- If $C\ne 0$, then
\begin{gather*}
    g(w)=\dfrac{A}{C}+\left(\dfrac{BC-AD}{C}\right)\dfrac{1}{Cw+D}=E+\dfrac{F}{w+G},
\end{gather*}
where
\begin{gather*}
    E=\dfrac{A}{C},\quad F=\dfrac{BC-AD}{C^2},\quad G=\dfrac{D}{C}.
\end{gather*}
Through substituting this form of $g(\cdot)$ back into \eqref{eq-02-26-832} and then equating coefficients, we have $EG+F=1$. Hence, by \eqref{eq-02-26-2023}, $(\cdot+G)^{\ell+2}f(\cdot)$ is a complex-valued constant function. Note that condition \eqref{cond-Cstar-III} follows directly from Lemma \ref{lem-self-map}.
\end{proof}

Lemma \ref{lem-Cstar} allows us to describe weighted composition operators which are complex symmetric with respect to conjugation $\calc_\star$ (simply: \emph{$\calc_\star$-selfadjoint}).
\begin{thm}\label{thm-Cstar-1}
Let $\ell\in\Z_{\geq 0}$. Let $f:\hlp\to\C$ and $g:\hlp\to\hlp$ be analytic functions. Then the operator $W_{f,g,\max}$ is $\calc_\star$-selfadjoint on $\spc$ if and only if \eqref{form-Cstar-I}-\eqref{cond-Cstar-III} hold. In this case, the operator $W_{f,g,\max}$ is bounded.
\end{thm}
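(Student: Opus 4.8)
The plan is to follow the two-step template of Theorems \ref{thm-her-1}, \ref{thm-uni} and \ref{thm-Ca-1}. For necessity, suppose $W_{f,g,\max}$ is $\calc_\star$-selfadjoint, i.e. $\calc_\star W_{f,g,\max}^*\calc_\star=W_{f,g,\max}$. By Proposition \ref{W*Kz-prop} and \eqref{eq-CstarKz=}, $\calc_\star K_z$ is a scalar multiple of a kernel function, hence lies in $\text{dom}(W_{f,g,\max}^*)$; combined with the selfadjointness identity this puts $K_z$ in $\text{dom}(W_{f,g,\max})$ for every $z\in\hlp$. First I would apply the operator identity to $K_z$ and unwind the left-hand side in three steps: $\calc_\star K_z=z^{-(\ell+2)}K_{1/\overline z}$ by \eqref{eq-CstarKz=}, then $W_{f,g,\max}^*K_{1/\overline z}=\overline{f(1/\overline z)}\,K_{g(1/\overline z)}$ by Proposition \ref{W*Kz-prop}, then $\calc_\star K_{g(1/\overline z)}=g(1/\overline z)^{-(\ell+2)}K_{1/\overline{g(1/\overline z)}}$ again by \eqref{eq-CstarKz=}, all the while conjugating the scalars dragged through the antilinear $\calc_\star$. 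Evaluating the resulting equality at an arbitrary $w\in\hlp$ gives exactly \eqref{cond-Cstar-sym}, and then Lemma \ref{lem-Cstar} forces $f$ and $g$ into one of the forms \eqref{form-Cstar-I}--\eqref{cond-Cstar-III}.

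For sufficiency, assume $f$ and $g$ are of one of those forms; I would first settle boundedness. In case \eqref{form-Cstar-I}, writing $1+\alpha w=\alpha(w+1/\alpha)$ exhibits the operator as $\be\alpha^{-(\ell+2)}W_{\psi,\phi,\max}$ with $\psi(w)=(w-u)^{-(\ell+2)}$ and $\phi(w)=-p-q(w-u)^{-1}$ for $u=-1/\alpha$, $p=-\alpha$, $q=0$; in case \eqref{form-Cstar-III} it is $r\,W_{\psi,\phi,\max}$ for $u=-\kappa$, $p=-\delta$, $q=\delta\kappa-1$; and in both cases the restrictions \eqref{cond-Cstar-I} and \eqref{cond-Cstar-III} on the coefficients are precisely condition \eqref{cond-self-mapping} for $(p,q,u)$, so Lemma \ref{lem-bdd-psi-phi} yields boundedness. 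In case \eqref{form-Cstar-II} the operator is $\theta\,C_{g,\max}$ with $g(w)=\lambda w$, $\lambda>0$, and the substitution $w=\lambda z$ in the integral defining $\|\cdot\|$ gives $\|C_{g,\max}h\|=\lambda^{-(\ell+2)/2}\|h\|$ on all of $\spc$. With boundedness in hand, I would run the necessity computation in reverse to verify $\calc_\star W_{f,g,\max}^*\calc_\star K_z=W_{f,g,\max}K_z$ for all $z\in\hlp$ — for instance, in case \eqref{form-Cstar-II} both sides are seen to equal $\theta\,2^\ell(1+\ell)(\lambda w+\overline z)^{-(\ell+2)}$ at $w$. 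Since $W_{f,g,\max}$ is bounded, so is $\calc_\star W_{f,g,\max}^*\calc_\star$, and two bounded operators that agree on the total set $\{K_z:z\in\hlp\}$ coincide; hence $W_{f,g,\max}$ is $\calc_\star$-selfadjoint on all of $\spc$, and bounded.

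I do not anticipate a genuine obstacle: the real analytic work is already packaged in Lemma \ref{lem-Cstar} and Lemma \ref{lem-bdd-psi-phi}, just as Theorem \ref{thm-her-1} rests on Lemma \ref{lem-her} and Lemma \ref{lem-bdd-psi-phi}. The one place that calls for care is the bookkeeping of conjugates in the kernel computations — $\calc_\star$ is antilinear and permutes kernels up to scalar factors, so one must conjugate $z^{-(\ell+2)}$, $f(1/\overline z)$ and $g(1/\overline z)^{-(\ell+2)}$ correctly and match the output index $1/\overline{g(1/\overline z)}$; every factor telescopes once the reality built into \eqref{cond-Cstar-I}--\eqref{cond-Cstar-III} (notably $\lambda\in\R_{>0}$ in case \eqref{form-Cstar-II}) is used.
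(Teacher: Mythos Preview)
Your proposal is correct and is exactly the argument the paper intends: the paper's proof of Theorem \ref{thm-Cstar-1} is the single sentence ``The proof is similar to those used in Theorem \ref{thm-Ca-1},'' and what you have written is precisely that argument transported from $\calc_\bfa$ to $\calc_\star$, with Lemma \ref{lem-Cstar} playing the role of Lemma \ref{lem-cs-Ca} and Lemma \ref{lem-bdd-psi-phi} (plus the direct estimate for $g(w)=\lambda w$) supplying boundedness. Your extra care in tracking the antilinearity of $\calc_\star$ and in matching the coefficients of cases \eqref{form-Cstar-I} and \eqref{form-Cstar-III} against the hypotheses of Lemma \ref{lem-bdd-psi-phi} is exactly the bookkeeping hidden behind the paper's one-line proof.
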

\begin{proof}
The proof is similar to those used in Theorem \ref{thm-Ca-1}.
\end{proof}

\begin{thm}\label{thm-Cstar-2}
Let $\ell\in\Z_{\geq 0}$. Let $f:\hlp\to\C$ and $g:\hlp\to\hlp$ be analytic functions. Then the operator $W_{f,g}$ is $\calc_\star$-selfadjoint on $\spc$ if and only if it verifies two conditions.
\begin{enumerate}
    \item The operator $W_{f,g}$ is maximal; that is $W_{f,g}=W_{f,g,\max}$.
    \item \eqref{form-Cstar-I}-\eqref{cond-Cstar-III} hold. 
\end{enumerate}
In this case, the operator $W_{f,g}$ is bounded.
\end{thm}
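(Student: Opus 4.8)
The plan is to mirror the proof of Theorem~\ref{thm-Ca-2} almost verbatim, replacing the conjugation $\calc_\bfa$ by $\calc_\star$ throughout. The implication $\Longleftarrow$ is immediate from Theorem~\ref{thm-Cstar-1}, since a maximal operator satisfying \eqref{form-Cstar-I}--\eqref{cond-Cstar-III} is $\calc_\star$-selfadjoint (and bounded) by that theorem. So the work is entirely in the direction $\Longrightarrow$.

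For the forward direction, suppose $W_{f,g}$ is $\calc_\star$-selfadjoint, i.e. $W_{f,g}=\calc_\star W_{f,g}^*\calc_\star$. First I would observe that $\calc_\star$ is an isometric involution by Lemma~\ref{lem-conj-Cstar}, so $\calc_\star W_{f,g}\calc_\star$ makes sense and the usual adjoint inclusion $W_{f,g}\preceq W_{f,g,\max}$ together with \cite[Proposition 1.6]{KS} gives
$$
W_{f,g,\max}^*\preceq W_{f,g}^*=\calc_\star W_{f,g}\calc_\star\preceq \calc_\star W_{f,g,\max}\calc_\star.
$$
By Proposition~\ref{W*Kz-prop} the kernel functions $K_z$ lie in $\text{dom}(W_{f,g,\max}^*)$, and by Lemma~\ref{lem-conj-Cstar} we can evaluate $\calc_\star W_{f,g,\max}^*\calc_\star K_z$ explicitly: $\calc_\star K_z = z^{-(\ell+2)}K_{1/\overline z}$, then apply $W_{f,g,\max}^*$ via Proposition~\ref{W*Kz-prop}, then apply $\calc_\star$ again. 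Matching this against $W_{f,g,\max}K_z = f\cdot K_z\circ g$ produces exactly equation \eqref{cond-Cstar-sym}. Hence Lemma~\ref{lem-Cstar} applies and conditions \eqref{form-Cstar-I}--\eqref{cond-Cstar-III} hold; Theorem~\ref{thm-Cstar-1} then tells us $W_{f,g,\max}$ is itself $\calc_\star$-selfadjoint and bounded. Finally, maximality follows from the chain
$$
W_{f,g}\preceq W_{f,g,\max}=\calc_\star W_{f,g,\max}^*\calc_\star\preceq \calc_\star W_{f,g}^*\calc_\star=W_{f,g},
$$
which forces $W_{f,g}=W_{f,g,\max}$, and then boundedness is inherited from Theorem~\ref{thm-Cstar-1}.

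The only place requiring genuine care — and the step I would expect to be the main obstacle — is verifying that $\calc_\star W_{f,g,\max}^*\calc_\star K_z(u) = W_{f,g,\max} K_z(u)$ for all $z,u\in\hlp$ really is equivalent to \eqref{cond-Cstar-sym}, including checking that $1/\overline{g(1/\overline z)}$ genuinely lies in $\hlp$ so that all kernel functions involved are well-defined (this uses that $g$ is a self-map of $\hlp$ and that $z\mapsto 1/\overline z$ preserves $\hlp$). Once that identification is in hand, everything else is the formal operator-theoretic bookkeeping with $\preceq$ that already appears in the proofs of Theorems~\ref{thm-her-2} and~\ref{thm-Ca-2}, so I would simply write ``the proof is analogous to that of Theorem~\ref{thm-Ca-2}, using Lemmas~\ref{lem-conj-Cstar} and~\ref{lem-Cstar} and Theorem~\ref{thm-Cstar-1} in place of Lemmas~\ref{lem-conj-Ca} and~\ref{lem-cs-Ca} and Theorem~\ref{thm-Ca-1}'' after spelling out the two displayed $\preceq$-chains above.
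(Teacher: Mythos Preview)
Your proposal is correct and is precisely the argument the paper has in mind: the paper's own proof is the single sentence ``The proof is similar to those used in Theorem~\ref{thm-Ca-2}'', and what you have written is exactly that analogy, with $\calc_\star$, Lemma~\ref{lem-conj-Cstar}, Lemma~\ref{lem-Cstar} and Theorem~\ref{thm-Cstar-1} substituted for $\calc_\bfa$, Lemma~\ref{lem-conj-Ca}, Lemma~\ref{lem-cs-Ca} and Theorem~\ref{thm-Ca-1}. Your extra remark about $1/\overline{g(1/\overline z)}\in\hlp$ is a sound sanity check (immediate since $g$ is a self-map and $z\mapsto 1/\overline z$ preserves $\hlp$) but does not deviate from the paper's approach.
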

\begin{proof}
The proof is similar to thosed used in Theorem \ref{thm-Ca-2}.
\end{proof}

Let us define the linear operator $U_{\bfb,\bfc}$ by setting
\begin{gather}
    U_{\bfb,\bfc}\xi(z)=\bfb\xi\left(|\bfb|^{\frac{2}{\ell+2}}z+i\bfc\right),\quad\bfb\in\C_{\ne 0},\bfc\in\R.
\end{gather}

We first formulate an algebraic lemma.
\begin{lem}\label{lem-20210519}
Let $\ell\in\Z_{\geq 0},\bfb\in\C_{\ne 0}$ and $\bfc\in\R$. Then the following assertions hold.
\begin{enumerate}
    \item The operator $U_{\bfb,\bfc}$ is unitary on $\spc$ and moreover
\begin{gather}
    U_{\bfb,\bfc}^*\xi(z)=\dfrac{1}{\bfb}\xi\left(|\bfb|^{-\frac{2}{\ell+2}}(z-i\bfc)\right).
\end{gather}
\item Always have
\begin{gather*}
    U_{\bfb,\bfc}^*\bfe_{f,g}U_{\bfb,\bfc}=\bfe_{\widehat{f},\widehat{g}},
\end{gather*}
where
\begin{gather*}
    \widehat{g}(z)=|\bfb|^{\frac{2}{\ell+2}}g\left(|\bfb|^{-\frac{2}{\ell+2}}(z-i\bfc)\right)+i\bfc,
    \quad\widehat{f}(z)=f\left(|\bfb|^{-\frac{2}{\ell+2}}(z-i\bfc)\right).
\end{gather*}
\end{enumerate}
\end{lem}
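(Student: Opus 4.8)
The plan is to verify both assertions by direct computation, since the operator $U_{\bfb,\bfc}$ is an explicit weighted-dilation-translation and everything reduces to change of variables in the defining integral together with routine substitutions. For part (1), I would first check that $U_{\bfb,\bfc}$ maps $\spc$ into itself isometrically. Writing $w = |\bfb|^{2/(\ell+2)} z + i\bfc$ with $w_1 = |\bfb|^{2/(\ell+2)} z_1$ and $w_2 = |\bfb|^{2/(\ell+2)} z_2 + \bfc$, the Jacobian of the real change of variables $(z_1,z_2)\mapsto(w_1,w_2)$ is $|\bfb|^{4/(\ell+2)}$, and $|\bfb \xi(w)|^2 = |\bfb|^2 |\xi(w)|^2$, while $z_1^\ell = |\bfb|^{-2\ell/(\ell+2)} w_1^\ell$. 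Collecting the powers of $|\bfb|$: $2 - \tfrac{2\ell}{\ell+2} + \tfrac{4}{\ell+2} = \tfrac{2(\ell+2) - 2\ell + 4}{\ell+2} = \tfrac{8}{\ell+2}$, which is not obviously $0$ — so I should double-check the normalization of $U_{\bfb,\bfc}$ against the intended unitarity; most likely the correct bookkeeping uses that the half-plane is invariant under $z_1\mapsto |\bfb|^{2/(\ell+2)}z_1$ only, and the exponent $\tfrac{2}{\ell+2}$ is chosen precisely so that the weight $z_1^\ell\,dz_1$ rescales by $|\bfb|^{-2}$ to cancel the $|\bfb|^2$ from the outer factor, which forces the dilation exponent and confirms the formula. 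Once $U_{\bfb,\bfc}$ is an isometry onto $\spc$ (surjectivity is clear since the map $w\mapsto |\bfb|^{-2/(\ell+2)}(w-i\bfc)$ is its inverse self-map of $\hlp$), it is unitary, and its inverse is computed by inverting the substitution, giving the stated $U_{\bfb,\bfc}^*$. Alternatively, and more cleanly, I would compute $\inner{U_{\bfb,\bfc}\xi}{\eta}$ via the substitution and read off the adjoint directly, then note $U_{\bfb,\bfc}^* U_{\bfb,\bfc} = I$ by composing the two explicit formulas.

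For part (2), the identity $U_{\bfb,\bfc}^*\bfe_{f,g}U_{\bfb,\bfc} = \bfe_{\widehat f,\widehat g}$ is purely formal: apply the three operators to a test function $\xi$ and track the argument. Starting from $\xi$, $U_{\bfb,\bfc}\xi(z) = \bfb\,\xi(|\bfb|^{2/(\ell+2)}z + i\bfc)$; then $\bfe_{f,g}$ multiplies by $f(z)$ and replaces $z$ by $g(z)$ inside, yielding $f(z)\,\bfb\,\xi(|\bfb|^{2/(\ell+2)}g(z) + i\bfc)$; finally $U_{\bfb,\bfc}^*$ multiplies by $\tfrac{1}{\bfb}$ and replaces $z$ by $|\bfb|^{-2/(\ell+2)}(z - i\bfc)$. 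The factors $\bfb$ and $\tfrac{1}{\bfb}$ cancel, leaving $f(|\bfb|^{-2/(\ell+2)}(z-i\bfc))\cdot \xi\big(|\bfb|^{2/(\ell+2)}g(|\bfb|^{-2/(\ell+2)}(z-i\bfc)) + i\bfc\big)$, which is exactly $\widehat f(z)\cdot\xi(\widehat g(z)) = \bfe_{\widehat f,\widehat g}\xi(z)$. One should note in passing that $\widehat g$ is again a self-map of $\hlp$: it is the conjugate of $g$ by the half-plane automorphism $z\mapsto |\bfb|^{2/(\ell+2)}z + i\bfc$, hence maps $\hlp$ to $\hlp$, so the expression $\bfe_{\widehat f,\widehat g}$ is meaningful.

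The main obstacle — really the only delicate point — is getting the powers of $|\bfb|$ to balance correctly in the norm computation of part (1); the exponent $\tfrac{2}{\ell+2}$ in the definition of $U_{\bfb,\bfc}$ is tuned exactly for this, and I would present that computation carefully (tracking the factor $|\bfb|^2$ from $|\bfb\xi|^2$, the factor $|\bfb|^{-2\ell/(\ell+2)}$ from $z_1^\ell$, and the Jacobian) to confirm isometry, after which unitarity and the adjoint formula are immediate. Part (2) requires no estimates at all — it is a bookkeeping identity on arguments of functions — so I would state it with the substitution spelled out and leave the remaining verification to the reader, consistent with the style of the surrounding lemmas.
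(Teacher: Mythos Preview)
Your approach is exactly what the paper intends: it leaves the proof of this lemma to the reader as ``a few computation steps,'' and direct verification via change of variables and substitution is the expected route.

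The one concrete slip is in your Jacobian bookkeeping for part~(1). The change of variables $(z_1,z_2)\mapsto(w_1,w_2)$ has Jacobian $|\bfb|^{4/(\ell+2)}$, so $dz_1\,dz_2 = |\bfb|^{-4/(\ell+2)}\,dw_1\,dw_2$; the exponent on $|\bfb|$ in the transformed integral is therefore
\[
2 - \frac{2\ell}{\ell+2} - \frac{4}{\ell+2} = \frac{2(\ell+2)-2\ell-4}{\ell+2} = 0,
\]
not $+\tfrac{4}{\ell+2}$ as you wrote. With that sign corrected, the isometry is immediate and your suspicion that the exponent $\tfrac{2}{\ell+2}$ is tuned for exact cancellation is confirmed. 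Your treatment of part~(2) is correct as stated.
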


Our study is motivated by the following lemma.
\begin{lem}\label{lem-UCU*-conj}
Let $\ell\in\Z_{\geq 0},\bfb\in\C_{\ne 0}$ and $\bfc\in\R$. Then the operator $U_{\bfb,\bfc}\calc_{\star}U_{\bfb,\bfc}^*$ is a conjugation on $\spc$.
\end{lem}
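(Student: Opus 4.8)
The plan is to deduce this from the fact, already established, that $\calc_\star$ is a conjugation on $\spc$ (Lemma~\ref{lem-conj-Cstar}) and that $U_{\bfb,\bfc}$ is unitary on $\spc$ (Lemma~\ref{lem-20210519}(1)), together with the elementary observation that conjugating a conjugation by a unitary yields a conjugation. Concretely, set $\calc = U_{\bfb,\bfc}\calc_\star U_{\bfb,\bfc}^*$. I must check two things: that $\calc$ is an isometric, surjective, antilinear map, and that $\calc^2 = I$.

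First I would verify antilinearity: $U_{\bfb,\bfc}$ and $U_{\bfb,\bfc}^*$ are linear while $\calc_\star$ is antilinear, so the composite reverses scalars, i.e. $\calc(\lambda h) = \overline{\lambda}\,\calc h$. Next, isometry: for $h\in\spc$, since $U_{\bfb,\bfc}^*$ is unitary we have $\norm{U_{\bfb,\bfc}^* h} = \norm{h}$; since $\calc_\star$ is a conjugation it is isometric, so $\norm{\calc_\star U_{\bfb,\bfc}^* h} = \norm{U_{\bfb,\bfc}^* h}$; finally $U_{\bfb,\bfc}$ is unitary, so applying it preserves the norm again. Chaining these gives $\norm{\calc h} = \norm{h}$. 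Surjectivity is immediate because each of the three factors is a bijection of $\spc$ onto itself (the two unitaries by Lemma~\ref{lem-20210519}(1), and $\calc_\star$ because a conjugation is in particular a bijection). Then the involution property:
\begin{gather*}
\calc^2 = U_{\bfb,\bfc}\calc_\star U_{\bfb,\bfc}^* U_{\bfb,\bfc}\calc_\star U_{\bfb,\bfc}^* = U_{\bfb,\bfc}\calc_\star^2 U_{\bfb,\bfc}^* = U_{\bfb,\bfc} U_{\bfb,\bfc}^* = I,
\end{gather*}
using $U_{\bfb,\bfc}^* U_{\bfb,\bfc} = I$ and $\calc_\star^2 = I$.

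There is essentially no hard part here; the statement is a structural consequence of the two preceding lemmas, and the only thing to be careful about is that the adjoint $U_{\bfb,\bfc}^*$ used in the definition is the genuine Hilbert-space adjoint, which by Lemma~\ref{lem-20210519}(1) coincides with the stated explicit inverse formula, so that $U_{\bfb,\bfc}^* = U_{\bfb,\bfc}^{-1}$ and the cancellation in the middle of $\calc^2$ is legitimate. If desired, one can also record the explicit action of $\calc$ on kernel functions by combining \eqref{eq-CstarKz=} with the formulas for $U_{\bfb,\bfc}$ and $U_{\bfb,\bfc}^*$ in Lemma~\ref{lem-20210519}, though this is not needed for the claim itself and I would relegate it to the subsequent analysis of $\calc$-selfadjoint operators.
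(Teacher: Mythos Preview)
Your argument is correct and is exactly the kind of routine verification the paper has in mind: the paper does not write out a proof at all, stating only that Lemmas \ref{lem-20210519} and \ref{lem-UCU*-conj} ``need a few computation steps and they are left to the reader.'' Your observation that a unitary conjugate of a conjugation is again a conjugation, checked via antilinearity, isometry, surjectivity, and the involution identity $\calc^2=I$, is precisely the intended direct computation.
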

The proofs of Lemmas \ref{lem-20210519} and \ref{lem-UCU*-conj} need a few computation steps and they are left to the reader. Lemma \ref{lem-UCU*-conj} leads to describe weighted composition operators which are complex symmetric with respect to conjugation $U_{\bfb,\bfc}\calc_{\star}U_{\bfb,\bfc}^*$ (or simply: \emph{$U_{\bfb,\bfc}\calc_{\star}U_{\bfb,\bfc}^*$-selfadjoint}).
\begin{thm}\label{thm-UCstarU-1}
Let $\ell\in\Z_{\geq 0},\bfb\in\C_{\ne 0}$ and $\bfc\in\R$. Let $f:\hlp\to\C$ and $g:\hlp\to\hlp$ be analytic functions. Then the following assertions are equivalent.
\begin{enumerate}
    \item The operator $W_{f,g,\max}$ is $U_{\bfb,\bfc}\calc_{\star}U_{\bfb,\bfc}^*$-selfadjoint on $\spc$.
    \item The operator $W_{\widehat{f},\widehat{g},\max}=U_{\bfb,\bfc}^*W_{f,g,\max}U_{\bfb,\bfc}$ is $\calc_{\star}$-selfadjoint on $\spc$.
    \item There are three cases of $f(\cdot)$ and $g(\cdot)$.
    \begin{enumerate}
        \item The first case is
        \begin{gather}\label{form-20210312-I}
            g(w)=|\bfb|^{-\frac{2}{\ell+2}}(\alpha-i\bfc),
            \quad f(w)=\dfrac{\be}{\left(1+\alpha(|\bfb|^{\frac{2}{\ell+2}}w+i\bfc)\right)^{\ell+2}},
        \end{gather}
        where $\alpha,\be$ satisfy \eqref{cond-Cstar-I}.
        \item The second case is
        \begin{gather}\label{form-20210312-II}
            g(w)=\lambda\left(w+i\bfc|\bfb|^{-\frac{2}{\ell+2}}\right)-i\bfc,
            \quad f(w)=\theta,
        \end{gather}
        where $\lambda,\theta$ satisfy \eqref{cond-Cstar-II}.
        \item The third case is
        \begin{gather}
            \nonumber g(w)=|\bfb|^{-\frac{2}{\ell+2}}\left(\dfrac{1-\delta\kappa}{|\bfb|^{\frac{2}{\ell+2}}w+i\bfc+\kappa}+\delta-i\bfc\right),
            \\\label{form-20210312-III}f(w)=\dfrac{r}{\left(|\bfb|^{\frac{2}{\ell+2}}w+i\bfc+\kappa\right)^{\ell+2}},
        \end{gather}
        where $\delta,\kappa,r$ satisfy \eqref{cond-Cstar-III}.
    \end{enumerate}
\end{enumerate}
In this case, the operator $W_{f,g,\max}$ is bounded.
\end{thm}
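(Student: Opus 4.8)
The plan is to establish the chain (1) $\Longleftrightarrow$ (2) $\Longleftrightarrow$ (3) and then read off the boundedness. The equivalence (1) $\Longleftrightarrow$ (2) is purely formal: since $U_{\bfb,\bfc}$ is unitary (Lemma \ref{lem-20210519}(1)), the operator $W_{f,g,\max}$ is $U_{\bfb,\bfc}\calc_{\star}U_{\bfb,\bfc}^*$-selfadjoint exactly when $U_{\bfb,\bfc}^*W_{f,g,\max}U_{\bfb,\bfc}$ is $\calc_{\star}$-selfadjoint. The only subtlety is domain bookkeeping: one must check that conjugating the maximal operator $W_{f,g,\max}$ by a unitary again yields the \emph{maximal} operator of the conjugated symbols, i.e. $U_{\bfb,\bfc}^*W_{f,g,\max}U_{\bfb,\bfc}=W_{\widehat f,\widehat g,\max}$. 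This follows from Lemma \ref{lem-20210519}(2), which identifies the action on the weighted composition expression, together with the fact that $\xi\in\mathrm{dom}(W_{f,g,\max})$ iff $\bfe_{f,g}\xi\in\spc$ iff $\bfe_{\widehat f,\widehat g}(U_{\bfb,\bfc}^*\xi)\in\spc$ iff $U_{\bfb,\bfc}^*\xi\in\mathrm{dom}(W_{\widehat f,\widehat g,\max})$; I would state this explicitly as the first step.

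Next, for (2) $\Longleftrightarrow$ (3), I would invoke Theorem \ref{thm-Cstar-1}: $W_{\widehat f,\widehat g,\max}$ is $\calc_{\star}$-selfadjoint iff $\widehat f,\widehat g$ are of one of the three forms \eqref{form-Cstar-I}-\eqref{cond-Cstar-III}. The remaining work is then a change-of-variables dictionary: using the formulas
\begin{gather*}
    \widehat{g}(z)=|\bfb|^{\frac{2}{\ell+2}}g\left(|\bfb|^{-\frac{2}{\ell+2}}(z-i\bfc)\right)+i\bfc,
    \qquad\widehat{f}(z)=f\left(|\bfb|^{-\frac{2}{\ell+2}}(z-i\bfc)\right),
\end{gather*}
I would invert each of the three canonical forms to solve for $f$ and $g$. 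For instance, if $\widehat g\equiv\alpha\in\hlp$ and $\widehat f(z)=\be/(1+\alpha z)^{\ell+2}$, then $g$ must be the constant $|\bfb|^{-2/(\ell+2)}(\alpha-i\bfc)$ and $f(w)=\widehat f(|\bfb|^{2/(\ell+2)}w+i\bfc)=\be/(1+\alpha(|\bfb|^{2/(\ell+2)}w+i\bfc))^{\ell+2}$, which is exactly \eqref{form-20210312-I}. The affine case $\widehat g(z)=\lambda z$ unwinds to $g(w)=\lambda(w+i\bfc|\bfb|^{-2/(\ell+2)})-i\bfc$ as in \eqref{form-20210312-II}, and the linear-fractional case $\widehat g(z)=\delta+(1-\delta\ell)/(z+\kappa)$ likewise unwinds to \eqref{form-20210312-III} after substituting $z\mapsto|\bfb|^{2/(\ell+2)}w+i\bfc$ and simplifying the denominator. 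In each case the parameter constraints are unchanged because $|\bfb|^{2/(\ell+2)}>0$ and $\bfc\in\R$, so the half-plane conditions \eqref{cond-Cstar-I}, \eqref{cond-Cstar-II}, \eqref{cond-Cstar-III} transfer verbatim. Finally, boundedness of $W_{f,g,\max}$ follows because it is unitarily equivalent to $W_{\widehat f,\widehat g,\max}$, which is bounded by the last assertion of Theorem \ref{thm-Cstar-1}.

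The main obstacle is not conceptual but bookkeeping: carefully carrying the affine substitution $z\mapsto|\bfb|^{2/(\ell+2)}w+i\bfc$ through the linear-fractional form in part (c) without sign or coefficient errors, and verifying that the "$1-\delta\ell$" numerator (as opposed to the "$1-\delta\kappa$" that naturally appears) in \eqref{form-20210312-III} matches — here one should double-check consistency with the constraint $EG+F=1$ from the proof of Lemma \ref{lem-Cstar} (the form \eqref{form-Cstar-III} is written with numerator $1-\delta\ell$, so the same relation must be used after conjugation). I expect this to be the only place where a genuine computation is needed; everything else is an application of previously established results.
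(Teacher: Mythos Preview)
Your proposal is correct and follows exactly the route the paper takes: the paper's own proof is the one-liner ``The proof makes use of Theorem~\ref{thm-Cstar-1} and Lemma~\ref{lem-20210519},'' and you have simply unpacked those two ingredients (unitary conjugation to reduce to $\calc_\star$-selfadjointness, then invoke the $\calc_\star$ classification and invert the affine change of variables). Your observation about the discrepancy between $1-\delta\ell$ in \eqref{form-Cstar-III} and $1-\delta\kappa$ in \eqref{form-20210312-III} is a genuine typo in the paper (the derivation $EG+F=1$ in Lemma~\ref{lem-Cstar} gives $F=1-\delta\kappa$), not a gap in your argument.
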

\begin{proof}
    The proof makes use of Theorem \ref{thm-Cstar-1} and Lemma \ref{lem-20210519}.
\end{proof}

\begin{thm}\label{thm-UCstarU-2}
Let $\ell\in\Z_{\geq 0},\bfb\in\C_{\ne 0},\bfc$ and $\in\R$. Let $f:\hlp\to\C$ and $g:\hlp\to\hlp$ be analytic functions. Then the operator $W_{f,g}$ is $U_{\bfb,\bfc}\calc_{\star}U_{\bfb,\bfc}^*$-selfadjoint on $\spc$ if and only if it verifies two conditions.
\begin{enumerate}
\item The operator $W_{f,g}$ is maximal; that is $W_{f,g}=W_{f,g,\max}$.
\item \eqref{form-20210312-I}-\eqref{form-20210312-III} hold. 
\end{enumerate}
In this case, the operator $W_{f,g}$ is bounded.
\end{thm}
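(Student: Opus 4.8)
The plan is to reduce the statement to the already-settled $\calc_\star$ case, Theorem \ref{thm-Cstar-2}, through the unitary intertwiner $U_{\bfb,\bfc}$ --- the same device by which Theorem \ref{thm-UCstarU-1} reduces its maximal assertion to Theorem \ref{thm-Cstar-1}.

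The implication $\Longleftarrow$ is immediate: if (1) and (2) hold, then $W_{f,g}=W_{f,g,\max}$, and by Theorem \ref{thm-UCstarU-1} this operator is $U_{\bfb,\bfc}\calc_\star U_{\bfb,\bfc}^*$-selfadjoint and bounded.

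For $\Longrightarrow$, write $U=U_{\bfb,\bfc}$, which is unitary by Lemma \ref{lem-20210519}(1), and recall that $U\calc_\star U^*$ is a conjugation by Lemma \ref{lem-UCU*-conj}. Set $T=U^*W_{f,g}U$. From $W_{f,g}=(U\calc_\star U^*)W_{f,g}^*(U\calc_\star U^*)$ together with $U^*U=I$ one gets $T=\calc_\star T^*\calc_\star$, so $T$ is $\calc_\star$-selfadjoint. By Lemma \ref{lem-20210519}(2), $U^*\bfe_{f,g}U=\bfe_{\widehat f,\widehat g}$, so $T$ is a nonmaximal weighted composition operator with symbols $\widehat f,\widehat g$, that is $T=W_{\widehat f,\widehat g}$; moreover $\bfe_{\widehat f,\widehat g}h\in\spc$ iff $\bfe_{f,g}(Uh)\in\spc$, whence $\text{dom}\,W_{\widehat f,\widehat g,\max}=U^*(\text{dom}\,W_{f,g,\max})$, and therefore $W_{f,g}$ is maximal if and only if $T$ is. Now apply Theorem \ref{thm-Cstar-2} to $T$: it yields first that $T$ is maximal, hence $W_{f,g}=W_{f,g,\max}$, which is item (1); and secondly that $\widehat f,\widehat g$ satisfy \eqref{form-Cstar-I}-\eqref{cond-Cstar-III}. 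Transporting the latter back through the definition of $\widehat f,\widehat g$ in Lemma \ref{lem-20210519}(2) --- precisely the change of variables performed in the proof of Theorem \ref{thm-UCstarU-1} to pass from its item (2) to item (3) --- recovers \eqref{form-20210312-I}-\eqref{form-20210312-III}, which is item (2). Finally, with (1) and (2) in hand, Theorem \ref{thm-UCstarU-1} shows $W_{f,g,\max}=W_{f,g}$ is bounded.

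The only delicate step is the domain bookkeeping: one must check that $T=U^*W_{f,g}U$ really is a nonmaximal weighted composition operator with the transformed symbols and that $\text{dom}\,W_{\widehat f,\widehat g,\max}=U^*(\text{dom}\,W_{f,g,\max})$, since it is exactly this identity that allows the maximality conclusion of Theorem \ref{thm-Cstar-2} to be pulled back to $W_{f,g}$. One could instead bypass the reduction and argue directly as in Theorem \ref{thm-Ca-2}: from $W_{f,g}\preceq W_{f,g,\max}$ and $W_{f,g}=\calc W_{f,g}^*\calc$ with $\calc=U\calc_\star U^*$, infer $W_{f,g,\max}^*\preceq\calc W_{f,g,\max}\calc$; evaluate on kernel functions via Proposition \ref{W*Kz-prop} and Lemmas \ref{lem-conj-Cstar} and \ref{lem-20210519}; invoke Theorem \ref{thm-UCstarU-1} to obtain boundedness and the symbol forms; and close the chain $W_{f,g}\preceq W_{f,g,\max}=\calc W_{f,g,\max}^*\calc\preceq\calc W_{f,g}^*\calc=W_{f,g}$ to conclude maximality.
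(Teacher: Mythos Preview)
Your proposal is correct and follows the paper's intended route: the paper's proof is the one-line ``The proof makes use of Theorem \ref{thm-Cstar-2} and Lemma \ref{lem-20210519},'' and your main argument is precisely the unitary-conjugation reduction that this sentence encodes, with the domain bookkeeping spelled out. Your alternative direct argument in the style of Theorem \ref{thm-Ca-2} also works, but it is not the approach the paper signals.
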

\begin{proof}
    The proof makes use of Theorem \ref{thm-Cstar-2} and Lemma \ref{lem-20210519}.
\end{proof}

\section{Some corollaries}\label{sec-cor}
A question to study is how big is the class of $\calc_\bfa$-selfadjoint, and $U_{\bfb,\bfc}\calc_{\star}U_{\bfb,\bfc}^*$-selfadjoint operators. In the following result, we show that this class is very interesting; namely, it contains properly hermitian operators studied in Section \ref{sec-her} and unitary operators investigated in Section \ref{sec-uni-op}.
\begin{cor}
Let $\ell\in\Z_{\geq 0}$. Let $f:\hlp\to\C$ and $g:\hlp\to\hlp$ be analytic functions. Then the following assertions hold.
\begin{enumerate}
    \item If the operator $W_{f,g}$ is hermitian, then it is $\calc_\bfa$-selfadjoint for some $\bfa$.
    \item Suppose that the operator $W_{f,g,\max}$ is unitary (that is \eqref{form-uni-2} or \eqref{form-uni} holds).
    \begin{enumerate}
        \item If the functions are of forms in \eqref{form-uni-2}, then the operator $W_{f,g,\max}$ is $U_{\bfb,\bfc}\calc_{\star}U_{\bfb,\bfc}^*$-selfadjoint for some $\bfb,\bfc$.
        \item If the functions are of forms in \eqref{form-uni}, then the operator $W_{f,g,\max}$ is $\calc_{\bfa}$-selfadjoint for some $\bfa$.
    \end{enumerate}
\end{enumerate}
\end{cor}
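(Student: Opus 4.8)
The plan is to feed each hypothesis into the structural classifications already proved (Theorems~\ref{thm-her-2}, \ref{thm-uni}, \ref{thm-Ca-1}, \ref{thm-UCstarU-1}) and, in every case, to exhibit the needed parameter by an explicit affine substitution. For part~(1), suppose $W_{f,g}$ is hermitian; by Theorem~\ref{thm-her-2} we have $W_{f,g}=W_{f,g,\max}$ and $(f,g)$ is one of the three families \eqref{form-a-b-del-her-III}--\eqref{cond-self-map-0}, and by Theorem~\ref{thm-Ca-1} it is enough, for each family, to choose $\bfa\in\R$ so that $(f,g)$ also satisfies \eqref{form-cs-Ca-1-I}--\eqref{cond-self-map-cs-Ca}. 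For family \eqref{form-a-b-del-her-III} one takes $\bfa=2\im\mu$, so that $\mu-i\bfa=\overline{\mu}$ and \eqref{form-cs-Ca-1-I} holds with the same $\mu$ and with $\delta=\varepsilon$; for family \eqref{form-a-b-del-her-II} any $\bfa$ works since $\R_{\geq 0}\subseteq\C_{\mathbf{Re}\geq 0}$ and $\R\subseteq\C$; for family \eqref{form-her} one takes $\bfa=-2\im\be/\alpha$, since then (using that $\alpha$ is real) $\alpha(w-i\bfa)-\be=\alpha w-\overline{\be}$, so the same triple $(\alpha,\be,\delta)$ puts $(f,g)$ into the shape \eqref{form-cs-Ca-1}. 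It then remains only to check that \eqref{cond-self-map-0} entails \eqref{cond-self-map-cs-Ca}: for $\alpha\in\R\setminus\{0\}$ one has $\im(1/\alpha)=0$, $\re(1/\alpha)=1/\alpha$ and $\re(\be/\alpha)=(\re\be)/\alpha$, so the branch $\alpha<0=\re\be$ of \eqref{cond-self-map-0} yields the first branch of \eqref{cond-self-map-cs-Ca}, whereas $\re\be<0<\alpha\leq(\re\be)^2/2$ yields the second, because that inequality is precisely $\bigl((\re\be)/\alpha\bigr)^2\geq 2/\alpha=\re(1/\alpha)+1/|\alpha|$.

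For part~(2) assume $W_{f,g,\max}$ is unitary, so by Theorem~\ref{thm-uni} its symbols are of the form \eqref{form-uni-2} or \eqref{form-uni}. Consider first \eqref{form-uni}, i.e.\ $g(z)=|\beta|^{2/(\ell+2)}(z-i\alpha)^{-1}+i\theta$ and $f(z)=\beta(z-i\alpha)^{-(\ell+2)}$. I would set $\bfa=\alpha+\theta$ and match $g$ against the shape $-\be'/\alpha'-2\bigl(\alpha'(w-i\bfa)-\be'\bigr)^{-1}$ of \eqref{form-cs-Ca-1}: equating the constant part, the residue and the pole forces $\alpha'=-2|\beta|^{-2/(\ell+2)}\in\R_{<0}$ and $\be'=2i\theta|\beta|^{-2/(\ell+2)}$, and then $\alpha'(w-i\bfa)-\be'=-2|\beta|^{-2/(\ell+2)}(w-i\alpha)$, so the weight is matched as well with $\delta'=\beta\bigl(-2|\beta|^{-2/(\ell+2)}\bigr)^{\ell+2}\neq 0$. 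Since $\alpha'$ is real and negative, $\im(1/\alpha')=0$, $\re(1/\alpha')<0$, and $\re(\be'/\alpha')=\re(-i\theta)=0$, that is, the first branch of \eqref{cond-self-map-cs-Ca} holds, and Theorem~\ref{thm-Ca-1} gives that $W_{f,g,\max}$ is $\calc_\bfa$-selfadjoint.

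In the remaining case \eqref{form-uni-2}, i.e.\ $g(z)=|C|^{2/(\ell+2)}z+i\delta$ and $f\equiv C$, I would use Theorem~\ref{thm-UCstarU-1}: it suffices to find $\bfb\in\C_{\neq 0}$ and $\bfc\in\R$ for which $W_{\widehat f,\widehat g,\max}=U_{\bfb,\bfc}^*W_{f,g,\max}U_{\bfb,\bfc}$ is $\calc_\star$-selfadjoint. By Lemma~\ref{lem-20210519}(2) one computes $\widehat f\equiv C$ and, writing $s=|\bfb|^{2/(\ell+2)}$, $\widehat g(z)=|C|^{2/(\ell+2)}z+i\bigl(s\delta+\bfc(1-|C|^{2/(\ell+2)})\bigr)$, so the task reduces to choosing $\bfb,\bfc$ that annihilate the imaginary intercept, after which $(\widehat f,\widehat g)$ has the shape \eqref{form-Cstar-II}--\eqref{cond-Cstar-II} with $\lambda=|C|^{2/(\ell+2)}>0$ and $\theta=C$, and Theorem~\ref{thm-Cstar-1} finishes. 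If $|C|\neq 1$ one takes $\bfb=1$ and $\bfc=\delta\bigl(|C|^{2/(\ell+2)}-1\bigr)^{-1}$; if $|C|=1$ and $\delta=0$ one takes $\bfb=1,\bfc=0$; the sub-case $|C|=1$, $\delta\neq 0$ is the one place where the substitution cannot linearise $\widehat g$, and there one records instead that $g(z)=z+i\delta$, $f\equiv C$ already fits \eqref{form-cs-Ca-1-II}, so $W_{f,g,\max}$ is $\calc_\bfa$-selfadjoint by Theorem~\ref{thm-Ca-1} (so in that borderline situation the assertion is to be understood with a conjugation of the $\calc_\bfa$-type as well).

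The hard part is not the algebra itself --- all the substitutions are of the same affine change-of-variables type --- but rather the two verifications that the self-mapping inequalities \eqref{cond-self-map-0} translate correctly into the two branches of \eqref{cond-self-map-cs-Ca} in parts~(1) and the case \eqref{form-uni} of part~(2), together with the careful parameter bookkeeping in the case \eqref{form-uni-2}, in particular the borderline situation $|C|=1$, $\delta\neq 0$.
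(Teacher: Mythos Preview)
Your approach is the paper's: feed each hypothesis into the structural theorems and, for each resulting family, exhibit the parameter by an affine change of variables. In part~(1) and in part~(2)(b) your case analysis matches the paper's proof line for line; you are simply more explicit in checking that \eqref{cond-self-map-0} implies \eqref{cond-self-map-cs-Ca}, and your $\bfa=-2\im\be/\alpha$ in the third hermitian family is what the identity $\alpha(w-i\bfa)-\be=\alpha w-\overline{\be}$ actually demands (the paper's $\bfa=2\im\be/\alpha$ is a sign slip).

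Where you go beyond the paper is in~(2)(a). The paper simply asserts that any pair of the form \eqref{form-uni-2} can be rewritten in the form \eqref{form-20210312-II}. You instead compute $\widehat g(z)=|C|^{2/(\ell+2)}z+i\bigl(|\bfb|^{2/(\ell+2)}\delta+\bfc(1-|C|^{2/(\ell+2)})\bigr)$ and correctly observe that when $|C|=1$ and $\delta\neq 0$ no choice of $\bfb,\bfc$ kills the intercept, so $\widehat g$ is never of the shape $\lambda z$ required by \eqref{form-Cstar-II} (and it clearly does not fall under \eqref{form-Cstar-I} or \eqref{form-Cstar-III} either). The reason the paper's one-line argument appears to succeed is a typo in \eqref{form-20210312-II}: inverting $\widehat g(z)=\lambda z$ via Lemma~\ref{lem-20210519}(2) gives $g(w)=\lambda w+i\bfc|\bfb|^{-2/(\ell+2)}(\lambda-1)$, i.e.\ the trailing term should be $-i\bfc|\bfb|^{-2/(\ell+2)}$ rather than $-i\bfc$ (compare with \eqref{form-20210312-III}, which is derived the same way and is correct). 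With the corrected formula the intercept vanishes identically when $\lambda=1$, so the borderline case genuinely escapes~(2)(a) as stated. Your fallback---recording that $g(z)=z+i\delta$, $f\equiv C$ already matches \eqref{form-cs-Ca-1-II} and is therefore $\calc_\bfa$-selfadjoint---is the right repair.
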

\begin{proof}
(1) Suppose that the operator $W_{f,g}$ is hermitian. By Theorem \ref{thm-her-2}, it must be bounded and there are three cases of functions $f(\cdot)$ and $g(\cdot)$.

- If the functions $f(\cdot)$ and $g(\cdot)$ verify \eqref{form-a-b-del-her-III}-\eqref{cond-a-b-del-her-III}, then we take $\bfa=2\im\mu$. For this choice, \eqref{form-cs-Ca-1-I}-\eqref{cond-cs-Ca-I} hold and so by Theorem \ref{thm-Ca-1}, the operator $W_{f,g}$ is $\calc_\bfa$-selfadjoint.

- If the functions $f(\cdot)$ and $g(\cdot)$ verify \eqref{form-a-b-del-her-II}-\eqref{cond-a-b-del-her-II}, then \eqref{form-cs-Ca-1-II}-\eqref{cond-cs-Ca-II} hold automatically.

- If the functions $f(\cdot)$ and $g(\cdot)$ verify \eqref{form-her}-\eqref{cond-self-map-0}, then \eqref{form-cs-Ca-1}-\eqref{cond-self-map-cs-Ca} hold for choosing $\bfa=\frac{2\im\beta}{\alpha}$. Thus, the hermitian operator $W_{f,g}$ is $\calc_\bfa$-selfadjoint for some $\bfa$.

(2) Suppose that the operator $W_{f,g,\max}$ is unitary. By Theorem \ref{thm-uni}, there are two cases. If the functions are of forms in \eqref{form-uni-2}, then they can be expressed in the forms in \eqref{form-20210312-II}. Hence, by Theorem \ref{thm-UCstarU-2}, the operator $W_{f,g,\max}$ is $U_{\bfb,\bfc}\calc_{\star}U_{\bfb,\bfc}^*$-selfadjoint for some $\bfb,\bfc$. If the functions are of forms in \eqref{form-uni}, then \eqref{form-cs-Ca-1} holds. So, by Theorem \ref{thm-Ca-2}, the operator $W_{f,g,\max}$ is $\calc_{\bfa}$-selfadjoint for some $\bfa$.
\end{proof}

\section {Composition operators}\label{sec-com-op}
Next, we focus only on the very restrictive category, that is bounded composition operators induced by linear fractional transforms. It was proven in \cite{zbMATH05907249} that such operators have an explicit form.
\begin{prop}[{\cite{zbMATH05907249}}]\label{prop-bdd-lff}
The linear fractional function of $\hlp$ inducing a bounded composition operator $C_{g,\max}$ on $\spc$ is of form 
\begin{align}\label{1}
g(w)=\mu w+w_0, \quad \text{where} \ \mu\in\R_{>0} \ \text{and} \ w_0\in\C_{\mathbf{Re}\geq 0}.
\end{align}
\end{prop}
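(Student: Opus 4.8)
The plan is to use boundedness of $C_{g,\max}$ to force a linear growth bound $\re g(z)\gtrsim\re z$ on the half-plane, read off the shape of $g$ from it, and then verify the converse by a direct change of variables.

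Assume first that $C_{g,\max}$ is bounded with norm $L$. Then its Hilbert-space adjoint is everywhere defined with norm $L$, and by the reproducing property $\langle h,C_{g,\max}^*K_z\rangle=(C_{g,\max}h)(z)=h(g(z))=\langle h,K_{g(z)}\rangle$, so $C_{g,\max}^*K_z=K_{g(z)}$ for all $z\in\hlp$ (this is the $f\equiv 1$ case of Proposition~\ref{W*Kz-prop}). Using $\|K_z\|^2=K_z(z)=2^\ell(1+\ell)/(2\re z)^{\ell+2}$, the inequality $\|K_{g(z)}\|\le L\|K_z\|$ rearranges to
\begin{gather*}
\re g(z)\geq L^{-2/(\ell+2)}\,\re z,\qquad\forall\,z\in\hlp.
\end{gather*}

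Next I specialize to $z=t\in(0,\infty)$ and let $t\to+\infty$. Writing $g(w)=(aw+b)/(cw+d)$ with $ad-bc\ne 0$: if $c\ne 0$ then $g(t)\to a/c$, and if $g$ is constant then $g(t)$ is constant; in either case $\re g(t)$ stays bounded, contradicting $\re g(t)\ge L^{-2/(\ell+2)}t\to\infty$. Hence $c=0$ and $g$ is a non-constant affine map, $g(w)=\mu w+w_0$ with $\mu\in\C\setminus\{0\}$ and $w_0\in\C$. Since $g$ maps $\hlp$ into $\hlp$, the quantity $\re g(x+iy)=(\re\mu)\,x-(\im\mu)\,y+\re w_0$ must be positive for every $x>0$, $y\in\R$; letting $y\to\pm\infty$ forces $\im\mu=0$, and then letting $x\to 0^+$ forces $\re w_0\ge 0$. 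Finally, $\re g(z)=\mu\,\re z+\re w_0\ge L^{-2/(\ell+2)}\,\re z$ together with $\re w_0\ge 0$ gives $\mu\ge L^{-2/(\ell+2)}>0$, which is exactly \eqref{1}.

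For the converse, take $g(w)=\mu w+w_0$ with $\mu>0$, $a_0:=\re w_0\ge 0$, and $h\in\spc$. The substitution $u+iv=\mu(x+iy)+w_0$ (so $x=(u-a_0)/\mu$, $dx\,dy=\mu^{-2}\,du\,dv$, with $u$ ranging over $(a_0,\infty)\subseteq(0,\infty)$) together with $0\le u-a_0\le u$ yields
\begin{gather*}
\|C_{g,\max}h\|^2=\dfrac{1}{\pi}\int\limits_{-\infty}^\infty\int\limits_{a_0}^\infty\left(\dfrac{u-a_0}{\mu}\right)^\ell|h(u+iv)|^2\,\dfrac{du\,dv}{\mu^2}\leq\dfrac{1}{\mu^{\ell+2}}\,\|h\|^2,
\end{gather*}
so $\text{dom}(C_{g,\max})=\spc$ and $C_{g,\max}$ is bounded with norm at most $\mu^{-(\ell+2)/2}$. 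I expect the only mildly delicate point to be the case analysis of linear fractional self-maps in the middle step: ruling out a pole of $g$ inside $\hlp$ (automatic, since $g$ is analytic and $\hlp$-valued there) and excluding the degenerate constant symbol (for which $C_{g,\max}h=h(w_0)\cdot 1\notin\spc$ whenever $h(w_0)\ne 0$, so $C_{g,\max}$ is not even everywhere defined, hence not bounded); everything else is routine computation.
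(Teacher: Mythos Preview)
Your argument is correct. Note, however, that the paper does not give its own proof of this proposition at all: it is quoted directly from \cite{zbMATH05907249} and is stated without proof, so there is no in-paper argument to compare against. Your proof supplies what the paper omits. The kernel estimate $\|K_{g(z)}\|\le L\|K_z\|$ giving $\re g(z)\gtrsim\re z$, followed by letting $t\to\infty$ along the positive real axis to kill the denominator, is exactly the standard route in \cite{zbMATH05907249}; your handling of the constant-symbol degeneration (via $1\notin\spc$) and the converse change-of-variables bound $\|C_{g,\max}\|\le\mu^{-(\ell+2)/2}$ are both clean and match the known sharp norm. One very minor remark: once you have established that $g$ is affine and a self-map of $\hlp$, the conditions $\mu\in\R_{>0}$ and $\re w_0\ge 0$ already follow from the self-map property alone (as in the paper's own discussion of affine self-maps elsewhere), so invoking the growth bound a second time to get $\mu>0$ is not strictly needed---but it does no harm.
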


\subsection{Complex symmetry}
The results obtained in the previous sections give some information about the complex symmetry of composition operators induced by functions of form \eqref{1}.
\begin{prop}
The following conclusions hold.
    \begin{enumerate}
        \item If $g(w)=w+w_0$, where $w_0\in\C_{\mathbf{Re}\geq 0}$, then the operator $C_{g,\max}$ is $\calc_\bfa$-symmetric for every $\bfa\in\R$.
        \item If $g(w)=\mu w$, where $\mu\in\R_{>0}$, then the operator $C_{g,\max}$ is $\calc_\star$-symmetric.
        \item If $g(w)=\mu w+w_0$, where $\mu\in\R_{>0}\setminus\{1\}$ and $w_0\in i\R$, then the operator $C_{g,\max}$ is $U_{1,\bfc}^*\calc_\star U_{1,\bfc}$-symmetric, where $\bfc=i(1-\mu)^{-1}w_0$.
    \end{enumerate}
\end{prop}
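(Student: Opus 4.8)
The strategy is to recognize each of the three composition operators as a special case of a weighted composition operator already characterized in the earlier sections, by taking the weight $f\equiv 1$ and matching parameters. In each case the boundedness is guaranteed by Proposition~\ref{prop-bdd-lff}, so only the complex-symmetry assertion requires checking.

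For item (1), with $g(w)=w+w_0$ and $w_0\in\C_{\mathbf{Re}\geq 0}$, I would invoke Theorem~\ref{thm-Ca-1} (or Theorem~\ref{thm-Ca-2}): the second case of Lemma~\ref{lem-cs-Ca}, namely \eqref{form-cs-Ca-1-II}-\eqref{cond-cs-Ca-II}, is exactly $g(w)=w+\gamma$ with $\gamma\in\C_{\mathbf{Re}\geq 0}$ and $f(w)=\lambda\in\C$. Taking $\gamma=w_0$ and $\lambda=1$ shows that for \emph{every} $\bfa\in\R$ the operator $W_{1,g,\max}=C_{g,\max}$ is $\calc_\bfa$-selfadjoint, which is the claim. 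For item (2), with $g(w)=\mu w$ and $\mu\in\R_{>0}$, I would instead use Theorem~\ref{thm-Cstar-1}: the second case of Lemma~\ref{lem-Cstar}, \eqref{form-Cstar-II}-\eqref{cond-Cstar-II}, is precisely $g(w)=\lambda w$ with $\lambda\in\R_{>0}$ and $f(w)=\theta\in\C$. Taking $\lambda=\mu$ and $\theta=1$ gives that $C_{g,\max}$ is $\calc_\star$-selfadjoint.

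For item (3), with $g(w)=\mu w+w_0$, $\mu\in\R_{>0}\setminus\{1\}$, $w_0\in i\R$, I would appeal to Theorem~\ref{thm-UCstarU-1} (or \ref{thm-UCstarU-2}) with $\bfb=1$: the second case there, \eqref{form-20210312-II}, has the form $g(w)=\lambda\bigl(w+i\bfc|\bfb|^{-2/(\ell+2)}\bigr)-i\bfc=\lambda w+i\bfc(\lambda-1)$ with $f(w)=\theta$, $\lambda\in\R_{>0}$, $\theta\in\C$. Setting $\lambda=\mu$, $\theta=1$, one needs $i\bfc(\mu-1)=w_0$, i.e. $\bfc=\dfrac{w_0}{i(\mu-1)}=\dfrac{-i w_0}{\mu-1}$; since $w_0\in i\R$ and $\mu\ne 1$ is real, $\bfc$ is indeed real, as required for the conjugation $U_{1,\bfc}\calc_\star U_{1,\bfc}^*$ to be defined. (The statement writes this as $\bfc=i(1-\mu)^{-1}w_0$, which matches after a sign bookkeeping: $i(1-\mu)^{-1}w_0=\dfrac{iw_0}{1-\mu}=\dfrac{-iw_0}{\mu-1}$.) Then Theorem~\ref{thm-UCstarU-1} yields that $C_{g,\max}$ is $U_{1,\bfc}\calc_\star U_{1,\bfc}^*$-selfadjoint.

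The only place demanding care is the parameter-matching in item (3): one must verify that the value of $\bfc$ forced by the equation $\widehat{g}$ reproduces $\mu w + w_0$ is genuinely real (this uses $w_0\in i\R$ and $\mu$ real) and that the hypotheses \eqref{cond-Cstar-II} are met by $\lambda=\mu>0$, $\theta=1$. These are immediate, so I expect no real obstacle; the proof is essentially a dictionary lookup against Theorems~\ref{thm-Ca-1}, \ref{thm-Cstar-1}, and \ref{thm-UCstarU-1}, together with the boundedness from Proposition~\ref{prop-bdd-lff}.
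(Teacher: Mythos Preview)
Your proposal is correct and follows exactly the paper's approach: the paper's proof consists of three sentences, one for each item, citing Theorem~\ref{thm-Ca-1}, Theorem~\ref{thm-Cstar-1}, and Theorem~\ref{thm-UCstarU-1} respectively; you have supplied the parameter matching that the paper leaves implicit. One small remark: in item (3) you correctly arrive at the conjugation $U_{1,\bfc}\calc_\star U_{1,\bfc}^*$ as in Theorem~\ref{thm-UCstarU-1}, whereas the statement writes $U_{1,\bfc}^*\calc_\star U_{1,\bfc}$; this appears to be a typographical slip in the statement (since $U_{1,\bfc}^*=U_{1,-\bfc}$, the two conjugations differ by a sign in $\bfc$), and your version is the one consistent with both the cited theorem and the stated value of $\bfc$.
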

\begin{proof}
    The first conclusion is a consequence of Theorem \ref{thm-Ca-1}. The second is obtained from Theorem \ref{thm-Cstar-1} and the last follows from Theorem \ref{thm-UCstarU-1}.
\end{proof}
The remaining task is to consider the case when 
\begin{gather}\label{cond-20210401}
    \mu\in\R_{>0}\setminus\{1\},\quad\text{and}\,\,w_0\in\hlp. 
\end{gather}
As it turns out, this case is in connection with the fixed points of the function $g(\cdot)$. We pause for a while to recall some terminologies. Let $\phi^{[n]}(\cdot)$ denote the $n$-th iterate of the self-map $\phi(\cdot).$ If
$\omega$ is a point of the closure of the open unit disk $\mathbb{D}$ such that the sequence of iterates $\phi^{[n]}:\mathbb{D}\longrightarrow \mathbb{D}$ converges uniformly on compacts subsets of $\mathbb{D}$ to $\omega,$ then $\omega$ is said to be an \textit{attractive point} for $\phi(\cdot).$ The Denjoy-Wolff Theorem states that if $\phi(\cdot)$ is an analytic self-map of $\mathbb{D}$ is not an elliptic automorphism then there is an unique point in $\omega\in\overline{\mathbb{D}}$ such that $\phi^{[n]}(z)\longrightarrow \omega$ as $n \rightarrow\infty,$ for each $z\in \mathbb{D}$ (see \cite[Theorem 2.51]{zbMATH00918588}). For analytic self-maps of $\hlp$ inducing bounded composition operators on $\spc,$ we have the following version:
\begin{thm}\label{13}
Let $g(\cdot)$ be an analytic self-map of $\hlp$ such that $C_{g,\max}$ is bounded. If $g(\cdot)$ has a fixed point $\alpha \in \hlp$ then 
\begin{align*}
\alpha=\displaystyle \lim_{n\longrightarrow \infty}g^{[n]}(w),
\end{align*} 
for each $w\in \hlp.$	
\end{thm}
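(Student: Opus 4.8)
The plan is to reduce the statement about self-maps of the right half-plane $\hlp$ to the classical Denjoy--Wolff theorem on the unit disk $\mathbb{D}$ via a conformal equivalence, and then use the boundedness hypothesis on $C_{g,\max}$ to rule out the only obstructing case, namely that $g$ is an elliptic automorphism.

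First I would fix the Cayley-type conformal map $\tau:\hlp\to\mathbb{D}$, for instance $\tau(w)=\dfrac{w-1}{w+1}$, with inverse $\tau^{-1}(\zeta)=\dfrac{1+\zeta}{1-\zeta}$, and set $\phi=\tau\circ g\circ\tau^{-1}$, an analytic self-map of $\mathbb{D}$. Conjugation is compatible with iteration, so $\phi^{[n]}=\tau\circ g^{[n]}\circ\tau^{-1}$ for every $n$, and $\tau$ and $\tau^{-1}$ are homeomorphisms of the respective closures that carry compact subsets to compact subsets. Hence uniform convergence on compacts of $g^{[n]}$ in $\hlp$ is equivalent to the same for $\phi^{[n]}$ in $\mathbb{D}$. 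Moreover, $g$ has a fixed point $\alpha\in\hlp$ if and only if $\phi$ has the interior fixed point $\tau(\alpha)\in\mathbb{D}$; call it $\omega_0$.

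Next I would invoke the Denjoy--Wolff theorem (\cite[Theorem 2.51]{zbMATH00918588}): if $\phi$ is not an elliptic automorphism of $\mathbb{D}$, then there is a unique Denjoy--Wolff point $\omega\in\overline{\mathbb{D}}$ with $\phi^{[n]}\to\omega$ uniformly on compacts. Since $\phi$ fixes the interior point $\omega_0$, and the Denjoy--Wolff point is characterized as the unique fixed point in $\overline{\mathbb{D}}$ toward which the iterates converge, we must have $\omega=\omega_0$ (an interior fixed point, when it exists and $\phi$ is not an automorphism rotation, is forced to be the Denjoy--Wolff point because the Schwarz--Pick lemma makes it attracting). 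Transporting back through $\tau^{-1}$ gives $g^{[n]}(w)\to\tau^{-1}(\omega_0)=\alpha$ uniformly on compact subsets of $\hlp$, in particular pointwise, which is the claimed conclusion.

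The main obstacle is dealing with the excluded case where $\phi$ is an elliptic automorphism of $\mathbb{D}$ — equivalently $g$ is a parabolic/elliptic automorphism of $\hlp$ fixing $\alpha$ — because then the iterates need not converge at all. Here is where the boundedness hypothesis on $C_{g,\max}$ enters decisively: by Proposition \ref{prop-bdd-lff}, a linear fractional $g$ inducing a bounded composition operator on $\spc$ must have the form $g(w)=\mu w+w_0$ with $\mu\in\R_{>0}$ and $w_0\in\C_{\mathbf{Re}\ge 0}$; such a map has a fixed point in $\hlp$ only if $\mu\ne 1$ (giving $\alpha=w_0/(1-\mu)$, which lies in $\hlp$ precisely when the configuration is right), and in that case $g^{[n]}(w)=\mu^n(w-\alpha)+\alpha\to\alpha$ since $\mu^n\to 0$ when $0<\mu<1$; the case $\mu>1$ would push the fixed point out of $\hlp$ or fail to be a self-map, and $\mu=1$ has no fixed point in $\hlp$. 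More generally, if $g$ is not assumed linear fractional, one argues that a boundedness-inducing self-map of $\hlp$ cannot be an elliptic automorphism with an interior fixed point: such an automorphism is conjugate to a rotation and its composition operator is an invertible isometry whose iterates $C_{g^{[n]}}$ would have to behave like powers of a rotation, which is consistent with boundedness, so the cleanest route is in fact to observe that $\alpha$ being an \emph{interior} fixed point already forces it to be the Denjoy--Wolff point unless $g$ is an automorphism, and then to treat the automorphism subcase directly by the explicit linear-fractional description. I would therefore structure the proof as: (i) the conformal reduction; (ii) apply Denjoy--Wolff in the non-automorphism case; (iii) dispatch the automorphism case using Proposition \ref{prop-bdd-lff} and an explicit iteration computation.
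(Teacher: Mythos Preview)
Your proposal is correct and follows essentially the same strategy as the paper: conjugate to the disk via a Cayley map, invoke Denjoy--Wolff, and use the boundedness hypothesis together with Proposition~\ref{prop-bdd-lff} to eliminate the elliptic-automorphism obstruction.

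The only noteworthy difference is in how the automorphism case is dispatched. The paper's argument is slightly crisper than your step~(iii): rather than computing iterates explicitly, it simply observes that the automorphisms of $\hlp$ inducing bounded composition operators are exactly the maps $g(w)=\mu w+ir$ with $\mu>0$, $r\in\R$, and none of these has a fixed point in $\hlp$ (the fixed point $ir/(1-\mu)$ is purely imaginary when $\mu\ne 1$, and there is none when $\mu=1$). Hence under the hypotheses of the theorem $g$ is \emph{never} an automorphism, so $\Psi$ is not one either, and Denjoy--Wolff applies without further case analysis. Your explicit computation for $g(w)=\mu w+w_0$ reaches the same conclusion but is doing more work than needed; also, your hedge ``if $g$ is not assumed linear fractional'' is unnecessary, since every automorphism of $\hlp$ is automatically M\"obius and Proposition~\ref{prop-bdd-lff} applies to it directly.
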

\begin{proof}
Let $\gamma(z)=\frac{1-z}{1+z}$ be the \textit{Mobius transform} of $\mathbb{D}$ onto $\hlp.$ Then $\Psi=\gamma^{-1}\circ g \circ \gamma$ is an analytic self-map of $\mathbb{D}$ whose fixed point is $\gamma^{-1}(\alpha).$ Now observe that $g(\cdot)$ is not an automorphism. Indeed, the only automorphism of $\hlp$ that induce bounded composition operators are $g(w)=w+w_0$ where $\mathrm{Re}(w_0)\geq 0$	and $g(w)=\mu w+\mathrm{i}r$ where $\mu\in (0,1)\cup(1,\infty)$ and $r\in \mathbb{R},$ and both cases $g(\cdot)$ not have fixed points in $\hlp.$ This implies that $\Psi(\cdot)$ also is not an automorphism of $\mathbb{D}.$ By the Denjoy-Wolff Theorem, the iterates $\Psi^{[n]}(\cdot)\longrightarrow \gamma^{-1}(\alpha)$ locally uniformly in $\mathbb{D}$ as $n \rightarrow\infty,$ and hence $g^{[n]}(\cdot)\longrightarrow \alpha$ locally uniformly in $\hlp$ as $n \rightarrow\infty.$
\end{proof}

\begin{thm}\label{7}
Let $g(\cdot)$ be an analytic self-map of $\hlp$ such that $C_{g,\max}$ is bounded on $\spc.$ If $g(\cdot)$ has a fixed point in $ \hlp$, then $C_{g,\max}$ is not complex symmetric.
\end{thm}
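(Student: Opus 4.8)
The plan is to argue by contradiction: assume $g(\cdot)$ has a fixed point $\alpha\in\hlp$ and that $C_{g,\max}$ is complex symmetric, say with conjugation $\calc$, so that $\calc C_{g,\max}^*\calc=C_{g,\max}$. By Proposition \ref{prop-bdd-lff}, the boundedness of $C_{g,\max}$ forces $g(w)=\mu w+w_0$ with $\mu\in\R_{>0}$ and $\re w_0\ge 0$; since $g$ has a fixed point in the open right half-plane, we must be in the case \eqref{cond-20210401}, i.e.\ $\mu\ne 1$, with $\alpha=\frac{w_0}{1-\mu}$. The first key step is to extract spectral information from complex symmetry: a complex symmetric operator $B$ satisfies $B=\calc B^*\calc$, hence $B$ and $B^*$ are unitarily equivalent (via the anti-unitary $\calc$), so $\sigma(C_{g,\max})$ is symmetric under complex conjugation, and more importantly every eigenvalue of $C_{g,\max}$ gives a corresponding eigenvalue of the same multiplicity.

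Next I would exhibit eigenvectors of $C_{g,\max}$ explicitly. Because $g$ fixes $\alpha$ and (by Theorem \ref{13}) $g^{[n]}(w)\to\alpha$ locally uniformly, the linear-fractional iteration is of "dilation type" at $\alpha$: in a coordinate $\sigma(w)$ conjugating $g$ to multiplication by $g'(\alpha)=\mu$ near $\alpha$, the functions $\sigma^k$ are formal eigenfunctions of the composition operator with eigenvalue $\mu^k$. The second key step is to check that enough of these eigenfunctions actually lie in $\spc$ — at least the constant function (eigenvalue $1$) and one genuinely non-constant eigenfunction with eigenvalue $\mu$ or $\mu^{-1}$ (one of which is $<1$ in modulus), using the point-evaluation/kernel estimates available for $\spc$. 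This produces at least two distinct eigenvalues $\{1,\mu^{\pm1}\}$.

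The decisive step is then a rigidity argument on these eigenvalues. Using $\calc$, for each eigenpair $C_{g,\max}h=\lambda h$ we get $C_{g,\max}^*(\calc h)=\overline{\lambda}(\calc h)$, and pairing eigenvectors of $C_{g,\max}$ against eigenvectors of $C_{g,\max}^*$ forces an orthogonality/biorthogonality relation: an eigenvector for $\lambda$ must be orthogonal to every $\calc$-image of an eigenvector for $\mu$ unless $\lambda=\overline{\mu}$. I would combine this with Proposition \ref{W*Kz-prop}, which computes $C_{g,\max}^*K_z=K_{g(z)}$, to pin down the adjoint's action on the span of kernel functions; in particular $K_\alpha$ is a fixed vector of $C_{g,\max}^*$ (eigenvalue $1$), so $\calc K_\alpha$ is a fixed vector of $C_{g,\max}$. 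Matching the fixed-vector of $C_{g,\max}$ (which, from the eigenfunction analysis, is the constant function, up to scalar) against $\calc K_\alpha$ constrains $\calc$ so severely that the remaining eigenvalue $\mu^{\pm1}\ne 1$ cannot be accommodated — concretely, one finds the span of $\{K_\alpha\}$ must be $\calc$-invariant and simultaneously invariant under $C_{g,\max}$, which is false since $C_{g,\max}K_\alpha=K_\alpha$ is \emph{not} proportional to $K_\alpha$ unless one reexamines — more cleanly, one shows $C_{g,\max}$ would have to be similar to a normal operator with real positive spectrum $\{1,\mu,\mu^{-1},\dots\}$ yet also complex symmetric, and then a direct computation of $\|C_{g,\max}^nh\|$ versus $\|(C_{g,\max}^*)^n\calc h\|$ on the two eigenvectors yields the contradiction $\mu=\mu^{-1}$, i.e.\ $\mu=1$.

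The main obstacle I anticipate is the second step: verifying that the formal eigenfunctions $\sigma^k$ genuinely belong to $\spc$ (and that the conjugating map $\sigma$ extends nicely to all of $\hlp$), since $\spc$ is a weighted Bergman space on an unbounded domain where polynomial-type growth in the Denjoy–Wolff coordinate need not be square-integrable. I would handle this by transferring to the disk via the Möbius map $\gamma$ as in the proof of Theorem \ref{13}, where the analogous eigenfunction membership in the disk Bergman-type space is classical (Koenigs' linearization), and then pulling back, being careful about how the weight $x^\ell$ transforms; alternatively one may avoid the full eigenfunction basis and work only with the two vectors $\mathbf 1$ and a single Koenigs eigenfunction, which suffices for the rigidity contradiction.
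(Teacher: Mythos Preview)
Your proposal contains the right starting observation but then takes a long detour and, crucially, rests on a false premise. You correctly note, via Proposition~\ref{W*Kz-prop}, that $C_{g,\max}^*K_\alpha=K_\alpha$, and hence that $\calc K_\alpha$ is a fixed vector of $C_{g,\max}$. But you then try to ``match'' this fixed vector against ``the constant function (eigenvalue $1$)''. There are \emph{no} nonzero constant functions in $\spc$: on the half-plane with weight $x^\ell$, constants are not square-integrable. So the eigenspace at $1$ that you want to use as a reference point is $\{0\}$. This undermines the whole eigenfunction/Koenigs strategy (and also explains why your anticipated obstacle---membership of $\sigma^k$ in $\spc$---is genuinely fatal, not a technicality).

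In fact, the absence of nonzero constants is not an obstacle but the entire proof. The paper's argument finishes in two lines from the point you already reached: since $\calc K_\alpha$ is fixed by $C_{g,\max}$, it is fixed by every iterate, so $(\calc K_\alpha)(g^{[n]}(w))=(\calc K_\alpha)(w)$ for all $n$ and all $w\in\hlp$. By Theorem~\ref{13}, $g^{[n]}(w)\to\alpha$, hence $\calc K_\alpha$ is constant, hence $\calc K_\alpha=0$, hence $K_\alpha=0$, a contradiction. No Koenigs linearization, no spectral symmetry, no transfer to the disk is needed; and the argument works for \emph{any} analytic self-map $g$ with a fixed point (you do not need to invoke Proposition~\ref{prop-bdd-lff} to reduce to the linear case).
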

\begin{proof} 
Assume in contrary that the operator $C_{g,\max}$ is complex symmetric. Let $C$ be a conjugation on $\spc$ such that $CC_{g,\max}C=C_{g,\max}^*.$ If there is $\alpha\in \hlp$ such that $g(\alpha)=\alpha,$ then Proposition \ref{W*Kz-prop} gives $C_{g,\max}^*K_{\alpha}=K_{\alpha}.$ Now observe that
\begin{align*}C_{g,\max}CK_{\alpha}=CC_{g,\max}^*K_{\alpha}=CK_{\alpha},
\end{align*}
which implies, with note that $C_{g^{[n]},\max}=C_{g,\max}^n$, that $(CK_{\alpha})( g^{[n]}(w)) =(CK_{\alpha})(w),$ for $w\in \hlp.$ By Theorem \ref{13}, $g^{[n]}(\cdot)$ converges to $\alpha$ locally uniformly in $\hlp$ as $n \rightarrow\infty.$ Then $CK_{\alpha}= (CK_{\alpha})(\alpha).$ Since the only constant function on $\spc$ is the function identically zero, we have $CK_{\alpha}= 0$ and hence $K_{\alpha}=C^2K_{\alpha}=0$; but this is impossible.
\end{proof}	

\begin{cor}\label{8}
If $g(w)=\mu w+w_0$, where coefficients satisfy \eqref{cond-20210401}, then the operator $C_{g,\max}$ is never complex symmetric.
\end{cor}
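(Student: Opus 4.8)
The plan is to reduce Corollary \ref{8} to Theorem \ref{7}, which already does all the heavy lifting. The only thing that needs to be checked is that the hypotheses of Theorem \ref{7} are met, namely that $g(w)=\mu w+w_0$ with $\mu\in\R_{>0}\setminus\{1\}$ and $w_0\in\hlp$ (i.e. $\re w_0>0$) induces a bounded composition operator on $\spc$ and has a fixed point in $\hlp$. Boundedness is immediate from Proposition \ref{prop-bdd-lff}, since \eqref{1} is satisfied with $\mu\in\R_{>0}$ and $w_0\in\C_{\mathbf{Re}\geq 0}$.

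The main (and essentially only) computation is to locate the fixed point. Solving $g(\alpha)=\alpha$ gives $\mu\alpha+w_0=\alpha$, hence $\alpha=\dfrac{w_0}{1-\mu}$. I would then verify $\alpha\in\hlp$: since $\mu>0$ and $\mu\ne 1$, the scalar $(1-\mu)^{-1}$ is a nonzero real number, so $\re\alpha=\dfrac{\re w_0}{1-\mu}$. This is where one has to be slightly careful: if $\mu\in(0,1)$ then $1-\mu>0$ and $\re\alpha>0$, so $\alpha\in\hlp$ directly. If $\mu>1$ then $1-\mu<0$ and the formula would give $\re\alpha<0$, so $\alpha\notin\hlp$ — but in that case one instead passes to the inverse map. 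Actually the cleaner route, avoiding case analysis, is to note that $g$ is a linear-fractional self-map of $\hlp$ and that its Denjoy–Wolff point is attracting; when $\mu\in(0,1)$ the finite fixed point $w_0/(1-\mu)$ lies in $\hlp$, and when $\mu>1$ one applies the same reasoning to $g^{-1}(w)=\mu^{-1}(w-w_0)$, which has the same iteration-limit behaviour reversed. However, since Theorem \ref{7} is stated in terms of a fixed point being \emph{in} $\hlp$, I would simply handle the case $\mu\in(0,1)$ by the direct formula, and for $\mu>1$ observe that $C_{g,\max}$ is complex symmetric if and only if its adjoint $C_{g,\max}^*$ is, and that invertibility of $g$ reduces to the previous case — or, most economically, just remark that $w_0/(1-\mu)\in\hlp$ exactly when $\mu\in(0,1)$ and dispose of $\mu>1$ by applying the result to $C_{g^{-1},\max}=C_{g,\max}^{-1}$.

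Putting it together, once $\alpha=w_0/(1-\mu)\in\hlp$ is a genuine fixed point of $g$ and $C_{g,\max}$ is bounded, Theorem \ref{7} yields immediately that $C_{g,\max}$ is not complex symmetric, completing the proof. The only obstacle worth flagging is the sign bookkeeping for $\re\alpha$ in the two ranges $\mu\in(0,1)$ and $\mu>1$; everything else is a one-line substitution. A careful write-up would either split into these two subcases or, preferably, note that replacing $g$ by $g^{-1}$ (which is again of the form \eqref{1} after a trivial rearrangement when $\re w_0\ge 0$ forces $\re w_0=0$ for invertibility — but here $\re w_0>0$ so $g$ is \emph{not} an automorphism, and the contraction case $\mu\in(0,1)$ already covers what we need after noting that for $\mu>1$ the map $g$ has no fixed point in $\hlp$ and hence the hypothesis of Corollary \ref{8} as stated via \eqref{cond-20210401} should be read together with the requirement that a fixed point exist). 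In practice the shortest valid proof is: compute $\alpha=w_0/(1-\mu)$, observe it lies in $\hlp$ when $\mu<1$, invoke Theorem \ref{7}; for $\mu>1$ note $g$ fixes no point of $\hlp$ but then $g$ is conjugate via $w\mapsto 1/w$ or via reversal of iterates to a contraction, reducing to the solved case. I would present the clean $\mu\in(0,1)$ argument in full and dispatch $\mu>1$ in a sentence.
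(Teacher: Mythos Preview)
Your approach—reduce to Theorem \ref{7} by exhibiting an interior fixed point—matches the paper's, whose entire proof is the single sentence ``The proof follows directly from Theorem \ref{7}.'' You are right to flag the case $\mu>1$: the unique fixed point $w_0/(1-\mu)$ then satisfies $\re\bigl(w_0/(1-\mu)\bigr)<0$, so it lies outside $\hlp$ and Theorem \ref{7} does not apply as stated. The paper glosses over this.

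Among your proposed fixes for $\mu>1$, the ones involving $g^{-1}$ do not work: $g^{-1}(w)=\mu^{-1}w-\mu^{-1}w_0$ has constant term with negative real part (since $\re w_0>0$), so $g^{-1}$ is not a self-map of $\hlp$, $C_{g^{-1},\max}$ is not defined on $\spc$, and $C_{g,\max}$ is not invertible there (indeed $g$ is not an automorphism of $\hlp$). The ``conjugate via $w\mapsto 1/w$'' suggestion is similarly not a well-defined reduction. The adjoint idea you mention in passing is the correct route and uses only tools already in the paper: by Proposition \ref{3}, $C_{g,\max}^*=\mu^{-(\ell+2)}C_{g_\star,\max}$ with $g_\star(w)=\mu^{-1}w+\mu^{-1}\overline{w_0}$; here $\mu^{-1}\in(0,1)$ and $\re(\mu^{-1}\overline{w_0})>0$, so $g_\star$ has the interior fixed point $\overline{w_0}/(\mu-1)\in\hlp$, and Theorem \ref{7} applies to $C_{g_\star,\max}$. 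Since complex symmetry is preserved under nonzero scalar multiples and under passing to adjoints, $C_{g,\max}$ is not complex symmetric either. Write up this single clean reduction for $\mu>1$ and discard the other sketches.
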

\begin{proof} The proof follows directly from Theorem \ref{7}. 
\end{proof}

\subsection{Normality}\label{4}
In this section, we characterize which linear fractional composition operators are normal, self-adjoint, unitary and isometric. Our first step is to establish a formula for the adjoint of linear fractional composition operators on $\spc.$ 

\begin{prop}\label{3} If the function $g(\cdot)$ is of form \eqref{1}, then the adjoint of $C_{g,\max}$ on $\spc$ is given by
\begin{align*}
C_{g,\max}^{*}=\mu^{-(\ell+2)}C_{g_\star,\max},
\end{align*}
where $g_\star(w)=\mu^{-1}w+\mu^{-1}\overline{w_0}.$
\end{prop}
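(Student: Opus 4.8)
The plan is to verify the claimed formula by testing both sides against the reproducing kernels, since these span a dense subspace of $\spc$ and $C_{g,\max}$ is bounded (by Proposition \ref{prop-bdd-lff}, the hypothesis on $g$ forces this), so its adjoint is a bounded everywhere-defined operator. First I would recall from Proposition \ref{W*Kz-prop}, applied with weight $f\equiv 1$, that
\begin{gather*}
C_{g,\max}^*K_z = K_{g(z)},\quad\forall z\in\hlp.
\end{gather*}
So the left-hand side acts on kernels in a completely explicit way: $C_{g,\max}^*K_z = K_{\mu z + w_0}$.

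Next I would compute the action of the right-hand side on the same kernel. Since $C_{g_\star,\max}$ is a composition operator, its action on a kernel function is most cleanly obtained from its own adjoint formula — but here it is even easier to compute $C_{g_\star,\max}K_z$ directly from the definition $K_z(x)=2^\ell(1+\ell)(x+\overline z)^{-(\ell+2)}$. We have
\begin{gather*}
(C_{g_\star,\max}K_z)(x) = K_z(g_\star(x)) = \frac{2^\ell(1+\ell)}{\left(\mu^{-1}x+\mu^{-1}\overline{w_0}+\overline z\right)^{\ell+2}}
= \frac{2^\ell(1+\ell)\,\mu^{\ell+2}}{\left(x+\overline{w_0}+\mu\overline z\right)^{\ell+2}}
= \mu^{\ell+2}\,K_{\mu z + w_0}(x),
\end{gather*}
using that $\mu\in\R_{>0}$ so $\mu=\overline\mu$ and $\overline{\mu z + w_0} = \mu\overline z + \overline{w_0}$. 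Multiplying by $\mu^{-(\ell+2)}$ gives exactly $K_{\mu z + w_0} = C_{g,\max}^*K_z$, so the two bounded operators agree on every kernel function.

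Finally I would invoke the fact that $\Span\{K_z: z\in\hlp\}$ is dense in $\spc$ (any element orthogonal to all $K_z$ vanishes identically by the reproducing property) together with the boundedness of both sides — $C_{g,\max}^*$ is bounded because $C_{g,\max}$ is, and $C_{g_\star,\max}$ is bounded because $g_\star$ is again of the form \eqref{1} with multiplier $\mu^{-1}\in\R_{>0}$ and translation part $\mu^{-1}\overline{w_0}\in\C_{\mathbf{Re}\geq 0}$, so Proposition \ref{prop-bdd-lff} applies — to conclude that the identity $C_{g,\max}^* = \mu^{-(\ell+2)}C_{g_\star,\max}$ holds on all of $\spc$. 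I do not expect a genuine obstacle here; the only point needing a little care is keeping the conjugations straight (noting $\mu$ is real while $w_0$ need not be) and confirming that $g_\star$ is itself an admissible symbol so that the right-hand side is a legitimate bounded operator on $\spc$.
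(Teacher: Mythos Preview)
Your proof is correct and uses essentially the same kernel computation as the paper. The paper's presentation is a minor variant: instead of invoking $C_{g,\max}^*K_z=K_{g(z)}$ and then matching it against $\mu^{-(\ell+2)}C_{g_\star,\max}K_z$, the paper first computes $C_{g,\max}K_z=\mu^{-(\ell+2)}K_{g_\star(z)}$ and then evaluates $(C_{g,\max}^*h)(z)=\langle h,C_{g,\max}K_z\rangle=\mu^{-(\ell+2)}h(g_\star(z))$ directly for arbitrary $h$, which sidesteps the explicit density argument; but the underlying calculation is the same one you carried out.
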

\begin{proof} We first observe that for $z,w\in \hlp,$ we have
\begin{align*}
(C_{g,\max}K_z)(w)=\frac{2^{\ell}(1+\ell)}{\mu^{\ell+2} \left( \mu^{-1}\overline{z}+\mu^{-1} w_0+w\right)^{\ell+2} }
=\left( \mu^{-(\ell+2)} K_{g_\star(z)}\right) (w).
\end{align*}
Then given $h(\cdot)\in \spc,$ we get
\begin{align*}
\left( C_{g,\max}^{*}h\right) (z)= \langle C_{g,\max}^*h, K_z\rangle= \langle h,C_{g,\max}K_z\rangle\\
=  \mu^{-(\ell+2)}\langle h,K_{g_\star(z)}\rangle = \left( \mu^{-(\ell+2)}C_{g_\star
	,\max}h\right) (z)
\end{align*}
which implies that $C_{g,\max}^*=\mu^{-(\ell+2)}C_{g_\star,\max}.$
\end{proof}

Using Proposition \ref{3}, we can prove the following result.
\begin{thm}\label{2}
Let $g(w)=\mu w+w_0$ with $\mu>0$ and $\mathrm{Re}(w_0)\geq 0.$ Then
\begin{enumerate}
\item $C_{g,\max}$ is normal on $\spc$ if and only if $\mu=1$ or $\mathrm{Re}(w_0)=0.$ 
\item $C_{g,\max}$ is self-adjoint on $\spc$ if and only if $\mu=1$ and $\mathrm{Re}(w_0)\geq 0.$
\item $C_{g,\max}$ is unitary on $\spc$ if and only if $\mu=1$ and $\mathrm{Re}(w_0)=0.$
\item $C_{g,\max}$ is isometric on $\spc$ if and only if it is unitary on $\spc.$
\end{enumerate}
\end{thm}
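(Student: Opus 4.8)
\textbf{Proof plan for Theorem \ref{2}.}
The plan is to reduce every statement to the explicit adjoint formula $C_{g,\max}^{*}=\mu^{-(\ell+2)}C_{g_\star,\max}$ from Proposition \ref{3}, where $g_\star(w)=\mu^{-1}w+\mu^{-1}\overline{w_0}$, together with the composition rule $C_{g_1,\max}C_{g_2,\max}=C_{g_2\circ g_1,\max}$ for symbols of the form \eqref{1}. The key algebraic observation is that composing two affine maps of the half-plane again gives an affine map, and two such composition operators coincide if and only if the affine symbols are identical; this last point uses that the maps $z\mapsto K_z$ have dense span, or more directly that $C_{h_1,\max}=C_{h_2,\max}$ forces $h_1=h_2$ by evaluating on reproducing kernels as in Proposition \ref{W*Kz-prop}. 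So each of the four properties becomes a pair of scalar equations in $\mu$ and $w_0$.

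First I would compute $C_{g,\max}^{*}C_{g,\max}$ and $C_{g,\max}C_{g,\max}^{*}$ explicitly. Writing $g(w)=\mu w+w_0$ and $g_\star(w)=\mu^{-1}w+\mu^{-1}\overline{w_0}$, one gets $C_{g,\max}^{*}C_{g,\max}=\mu^{-(\ell+2)}C_{g\circ g_\star,\max}$ and $C_{g,\max}C_{g,\max}^{*}=\mu^{-(\ell+2)}C_{g_\star\circ g,\max}$, where $g\circ g_\star(w)=w+w_0+\overline{w_0}$ and $g_\star\circ g(w)=w+\mu^{-1}\overline{w_0}+\mu^{-1}\overline{w_0}$... more carefully $g_\star\circ g(w)=\mu^{-1}(\mu w+w_0)+\mu^{-1}\overline{w_0}=w+\mu^{-1}(w_0+\overline{w_0})=w+2\mu^{-1}\mathrm{Re}(w_0)$, while $g\circ g_\star(w)=\mu(\mu^{-1}w+\mu^{-1}\overline{w_0})+w_0=w+w_0+\overline{w_0}=w+2\mathrm{Re}(w_0)$. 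Normality then holds iff these two affine maps agree, i.e. iff $2\mathrm{Re}(w_0)=2\mu^{-1}\mathrm{Re}(w_0)$, i.e. iff $\mathrm{Re}(w_0)=0$ or $\mu=1$; this proves (1). For (2), self-adjointness $C_{g,\max}^{*}=C_{g,\max}$ means $\mu^{-(\ell+2)}=1$ and $g_\star=g$, forcing $\mu=1$ (since $\mu>0$) and then $\overline{w_0}=w_0$, i.e. $\mathrm{Re}(w_0)\geq0$ together with $w_0\in\R$; here one must double-check that $g_\star=g$ with $\mu=1$ gives $w_0=\overline{w_0}$, hence the stated condition reads $\mathrm{Re}(w_0)\ge 0$ once we remember $w_0\in\R_{\geq0}$ from \eqref{1} — I would state it precisely as $\mu=1$ and $w_0\in\R_{\ge0}$. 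For (3), unitarity means $C_{g,\max}C_{g,\max}^{*}=C_{g,\max}^{*}C_{g,\max}=I=C_{\mathrm{id},\max}$; from the computations above this forces $\mu^{-(\ell+2)}=1$, hence $\mu=1$, and then $g\circ g_\star=\mathrm{id}$ forces $w_0+\overline{w_0}=0$, i.e. $\mathrm{Re}(w_0)=0$.

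For (4), the nontrivial direction is that isometric implies unitary. An isometry satisfies $C_{g,\max}^{*}C_{g,\max}=I$, which by the computation in the previous paragraph already gives $\mu^{-(\ell+2)}=1$ and $w_0+\overline{w_0}=0$, hence $\mu=1$ and $\mathrm{Re}(w_0)=0$ — exactly the unitarity condition from (3); the converse (unitary implies isometric) is immediate. The main obstacle, and the only place real care is needed, is justifying the cancellation step ``$C_{h_1,\max}=C_{h_2,\max}\Rightarrow h_1=h_2$'' in the context of possibly unbounded maximal operators: I would handle it by applying both operators to a reproducing kernel $K_z$ and using Proposition \ref{W*Kz-prop} (or rather its composition-operator analogue, $C_{h,\max}^{*}K_z=K_{h(z)}$ up to the appropriate scalar), which shows $K_{h_1(z)}$ and $K_{h_2(z)}$ are proportional for all $z$, forcing $h_1\equiv h_2$; since all operators appearing here are bounded once $\mu=1$, and the scalar factors $\mu^{-(\ell+2)}$ are pinned down first, no domain subtleties actually bite. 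Everything else is the routine affine bookkeeping sketched above.
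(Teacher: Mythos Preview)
Your plan is correct, and for items (1)--(3) it is presumably what the authors had in mind, since they write ``We omit the proofs for item (1--3)'' and the adjoint formula from Proposition~\ref{3} is the only tool on the table. Your handling of the cancellation step $\lambda C_{h_1,\max}=C_{h_2,\max}\Rightarrow\lambda=1,\ h_1=h_2$ is fine: apply both sides to $K_z$ and compare the resulting functions of $(w,\overline{z})$, letting $\overline{z}\to+\infty$ to pin down the scalar first. You are also right to flag the awkward phrasing of (2): the intended conclusion is $\mu=1$ and $w_0\in\R_{\ge0}$ (equivalently $\mathrm{Im}\,w_0=0$), not merely the hypothesis $\mathrm{Re}\,w_0\ge0$ restated.

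For item (4) the paper takes a different route from yours. Rather than invoking $C_{g,\max}^{*}C_{g,\max}=I$ and the affine computation you already did for (1) and (3), the authors test the isometry condition on reproducing kernels: they compute $\|K_z\|^2=K_z(z)=2^{\ell}(1+\ell)/(2\,\mathrm{Re}\,z)^{\ell+2}$ and, using $C_{g,\max}K_z=\mu^{-(\ell+2)}K_{g_\star(z)}$, obtain $\|C_{g,\max}K_z\|^2=2^{\ell}(1+\ell)\mu^{-(\ell+2)}/(2\,\mathrm{Re}(z+w_0))^{\ell+2}$. Equating and plugging in $z=1,2$ gives a $2\times2$ system forcing $\mu=1$ and $\mathrm{Re}\,w_0=0$. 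Your argument is shorter and recycles work already done, while theirs avoids the symbol-identification step entirely and stays at the level of kernel norms; both are perfectly valid.
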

\begin{proof}  
We omit the proofs for item (1-3), and prove item (4). Since each unitary operator is isometric, it is enough to prove that if $C_{g,\max}$ is isometric then $C_{g,\max}$ is unitary. Suppose that the operator $C_{g,\max}$ is isometric on $\spc$ then $\|C_{g,\max}K_z\|^2=\|K_z\|^2,$ for each $z\in \hlp.$ Now observe that 
\begin{align}\label{12}
\left\| K_z\right\| ^2=\left\langle K_{z},K_{z}\right\rangle =K_{z}(z)=\frac{2^{\ell}(1+\ell)}{2^{\ell+2}\mathrm{Re}(z)^{\ell+2}}.
\end{align}
Moreover, combining Propositions \ref{W*Kz-prop} and \ref{3}, we obtain $C_{g,\max}K_{z}=\mu^{-(\ell+2)}K_{g_\star(z)}$ where $g_\star(w)=\mu^{-1}w+\mu^{-1}\overline{w_0}.$  Then \eqref{12} gives
\begin{align*}
\left\| C_{g,\max}K_z\right\| ^2=\frac{2^{\ell}(1+\ell)\mu^{-2(\ell+2)}}{2^{\ell+2}\mathrm{Re}(g_\star(z))^{\ell+2}}=\frac{2^{\ell}(1+\ell)\mu^{-(\ell+2)}}{2^{\ell+2}\mathrm{Re}(z+w_0)^{\ell+2}},
\end{align*}
and hence 
\begin{align*}
\frac{2^{\ell}(1+\ell)}{2^{\ell+2}\mathrm{Re}(z)^{\ell+2}}
=\frac{2^{\ell}(1+\ell)\mu^{-(\ell+2)}}{2^{\ell+2}\mathrm{Re}(z+w_0)^{\ell+2}},
\end{align*}
or equivalently $\mathrm{Re}(z+w_0)=\mu^{-(\ell+2)}\mathrm{Re}(z).$ Choosing $z=1$ and $z=2,$ we obtain the following system
\begin{align*}
\left\lbrace \begin{array}{rl}
1+\mathrm{Re}(w_0)&=\mu^{-(\ell+2)}\\
2+\mathrm{Re}(w_0)&=2\mu^{-(\ell+2)}
\end{array}\right. 
\end{align*}
whose solution is $\mu=1$ and $\mathrm{Re}(w_0)=0.$ By item (3), the operator $C_{g,\max}$ is unitary.
\end{proof}
 For general case, Theorem \ref{7} provides the following result:
 \begin{thm} $\spc$ not support normal composition operators whose function $g(\cdot)$ has a fixed point in $\hlp.$
 \end{thm}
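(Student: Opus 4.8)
The plan is to combine two facts: (i) every normal operator on a separable complex Hilbert space is complex symmetric, and (ii) Theorem~\ref{7}, which forbids complex symmetry once $g$ has an interior fixed point. For (i), if $C_{g,\max}$ is normal on $\spc$, the spectral theorem realises it (up to unitary equivalence) as a multiplication operator $M_\varphi$ on some $L^2(\Omega,\mu)$; since $M_\varphi^*=M_{\overline\varphi}$, the operator $M_\varphi$ is complex symmetric for the conjugation $h\mapsto\overline h$, and transporting this conjugation back through the unitary makes $C_{g,\max}$ complex symmetric (one may instead cite Garcia--Putinar). Assuming toward a contradiction that $g$ has a fixed point in $\hlp$, Theorem~\ref{7} then says $C_{g,\max}$ is not complex symmetric --- a contradiction --- so no such $g$ exists.

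I would also record the shorter reproducing-kernel argument, which is self-contained. If $g(\alpha)=\alpha$ with $\alpha\in\hlp$, then Proposition~\ref{W*Kz-prop} (weight $\equiv 1$) gives $C_{g,\max}^*K_\alpha=K_{g(\alpha)}=K_\alpha$; for a normal operator $T$ one has $\ker(T-\lambda I)=\ker(T^*-\overline\lambda I)$, so $C_{g,\max}K_\alpha=K_\alpha$, i.e.\ $K_\alpha\circ g=K_\alpha$, which is the identity $(g(w)+\overline\alpha)^{\ell+2}=(w+\overline\alpha)^{\ell+2}$ on $\hlp$. Taking the analytic $(\ell+2)$-th root of the constant-modulus ratio $(g(w)+\overline\alpha)/(w+\overline\alpha)$ and using connectedness of $\hlp$ forces $g(w)=\zeta w+(\zeta-1)\overline\alpha$ with $\zeta^{\ell+2}=1$; the self-map requirement (or $g(\alpha)=\alpha$ together with $\re\alpha>0$) then forces $\zeta=1$, i.e.\ $g=\mathrm{id}$. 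Alternatively, iterating $K_\alpha\circ g^{[n]}=K_\alpha$ and appealing to Theorem~\ref{13} (whenever $C_{g,\max}$ is bounded) forces $K_\alpha$ to be constant, hence $\equiv 0$, contradicting $K_\alpha(\alpha)=\|K_\alpha\|^2>0$.

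The main point to be careful with is hypothesis-matching rather than any deep estimate: Theorem~\ref{7} and Theorem~\ref{13} are stated for \emph{bounded} $C_{g,\max}$, whereas normality is a priori an unbounded notion. One resolves this either by restricting to the bounded composition operators that are the subject of this section, or by observing --- as in the kernel computation --- that an interior fixed point already pins $g$ down to the affine family, which is covered by Proposition~\ref{prop-bdd-lff}. It should also be made explicit that the trivial symbol $g=\mathrm{id}$, for which $C_{g,\max}=I$ is normal, is understood to be excluded from the statement.
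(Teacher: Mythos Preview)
Your first approach---normal $\Rightarrow$ complex symmetric (via the spectral theorem or the Garcia--Putinar structure theory), then invoke Theorem~\ref{7}---is exactly what the paper does: it offers no separate proof, simply writing that ``Theorem~\ref{7} provides the following result.'' So on that route you are aligned with the authors.

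Your second, self-contained kernel argument is a genuinely different and somewhat cleaner route. The paper reaches the contradiction through the Denjoy--Wolff machinery of Theorem~\ref{13} and the complex-symmetry obstruction of Theorem~\ref{7}; you instead exploit directly the normal-operator identity $\ker(T-\lambda)=\ker(T^*-\overline{\lambda})$ to force $K_\alpha\circ g=K_\alpha$, then read off $g$ algebraically from the $(\ell+2)$-th-root-of-unity constraint. This buys you: (a) no appeal to the general fact that normal operators are complex symmetric, (b) no iteration or Denjoy--Wolff, and (c) an explicit identification of the sole exception $g=\mathrm{id}$. The paper's route, by contrast, leverages work already done and applies verbatim to any analytic self-map (not just linear fractional), at the cost of quoting the deeper Theorem~\ref{7}.

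Your caveats are well taken and in fact sharper than the paper: the exclusion of $g=\mathrm{id}$ is necessary for the statement to be literally true (the paper's Theorems~\ref{13} and~\ref{7} tacitly exclude it as well, since their proofs assert that no automorphism inducing a bounded $C_{g,\max}$ has an interior fixed point), and the boundedness hypothesis is indeed implicit from the section's standing assumptions.
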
	

\section{A natural link to complex symmetry in Lebesgue spaces}\label{sec-link-Lebes}
Lebesgue space $\lb$ consists of measurable functions $h:\R_{>0}\to\C$ for which
\begin{gather*}
\|\bfh\|^2
=\int\limits_0^\infty|\bfh(t)|^2\dfrac{\Gamma(1+\ell)}{2^\ell t^{1+\ell}}\,dt<\infty.
\end{gather*}
Our research is motivated by a Paley-Wiener theorem, which states that Bergman space $\spc$ is isometrically isomorphic under the Laplace transform to Lebesgue space $\lb$. In fact, to each function $h(\cdot)\in\spc$, there corresponds a function $\bfh(\cdot)\in\lb$ such that $h(\cdot)=\lp(\bfh)(\cdot)$, where the symbol $\lp$ stands for the Laplace transform
\begin{gather*}
\lp(\bfh)(z)=\int\limits_0^\infty\bfh(t)e^{izt}\,dt.
\end{gather*}

The following example lists some Laplace transform formulas.
\begin{exa}[\cite{zbMATH00432614}]\label{exa}
	For every $z\in\hlp$, we always have
	\begin{gather*}
	K_z=\lp\left(\dfrac{2^\ell}{\ell!}t^{\ell+1}e^{-t\overline{z}}\right).
	\end{gather*}
\end{exa}

In the following result, we study the transformation of conjugations via the Laplace transform.
\begin{prop}
	\begin{enumerate}
		\item The operator $\lp^{-1}\calc_\bfa\lp$ is a conjugation on $\lb$ and
		\begin{gather*}
		\lp^{-1}\calc_\bfa\lp\left(\dfrac{2^\ell}{\ell!}t^{\ell+1}e^{-t\overline{z}}\right)
		=\dfrac{2^\ell}{\ell!}t^{\ell+1}e^{-t(z-i\bfa)},\quad z\in\hlp.
		\end{gather*}
		In particular with $\bfa=0$, we have the explicit form
		\begin{gather*}
		\left(\lp^{-1}\calj\lp\right)\bfh(t)=\overline{\bfh(t)},\quad\forall\bfh(\cdot)\in\lb.
		\end{gather*}
		\item The operator $\lp^{-1}\calc_\star\lp$ is a conjugation on $\lb$ and
		\begin{gather*}
		\lp^{-1}\calc_\star\lp\left(\dfrac{2^\ell}{\ell!}t^{\ell+1}e^{-t\overline{z}}\right)
		=\dfrac{2^\ell}{z^{\ell+2}\ell!}t^{\ell+1}e^{-\frac{t}{z}},\quad z\in\hlp.
		\end{gather*}
	\end{enumerate}
\end{prop}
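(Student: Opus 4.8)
The plan is to transport the conjugations $\calc_{\bfa}$ and $\calc_\star$ from the Bergman space $\spc$ to the Lebesgue space $\lb$ across the Paley--Wiener isometry $\lp$, and then to read off their explicit action on the exponentials $\frac{2^\ell}{\ell!}t^{\ell+1}e^{-t\overline z}$ by means of the identity $\lp\!\left(\frac{2^\ell}{\ell!}t^{\ell+1}e^{-t\overline z}\right)=K_z$ from Example \ref{exa}.

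First I would isolate the abstract fact behind both parts: whenever $\calc$ is a conjugation on $\spc$ and $\lp:\lb\to\spc$ is a surjective linear isometry, the operator $\lp^{-1}\calc\lp$ is a conjugation on $\lb$. Indeed it is anti-linear (a composition of the linear maps $\lp$ and $\lp^{-1}$ with the anti-linear $\calc$), it is isometric since $\|\lp^{-1}\calc\lp\bfh\|=\|\calc\lp\bfh\|=\|\lp\bfh\|=\|\bfh\|$, and it squares to $\lp^{-1}\calc^2\lp=I$. Feeding in $\calc_{\bfa}$ (Lemma \ref{lem-conj-Ca}) and $\calc_\star$ (Lemma \ref{lem-conj-Cstar}) then proves that $\lp^{-1}\calc_{\bfa}\lp$ and $\lp^{-1}\calc_\star\lp$ are conjugations on $\lb$, which is the first half of (1) and of (2).

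Next I would compute the action on kernels. Abbreviating $\eta_z(t)=\frac{2^\ell}{\ell!}t^{\ell+1}e^{-t\overline z}$, Example \ref{exa} reads $\lp\eta_z=K_z$, i.e.\ $\lp^{-1}K_z=\eta_z$ for all $z\in\hlp$. For (1): since $\bfa\in\R$ one has $\overline z+i\bfa\in\hlp$ and $\overline{\overline z+i\bfa}=z-i\bfa$, so applying $\lp\eta_z=K_z$, then \eqref{eq-CaKz=}, then $\lp^{-1}K_{\overline z+i\bfa}=\eta_{\overline z+i\bfa}$ gives $\lp^{-1}\calc_{\bfa}\lp\eta_z=\eta_{\overline z+i\bfa}=\frac{2^\ell}{\ell!}t^{\ell+1}e^{-t(z-i\bfa)}$, the asserted formula. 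For (2): here $1/\overline z\in\hlp$ and $\overline{1/\overline z}=1/z$, so \eqref{eq-CstarKz=} together with the linearity of $\lp^{-1}$ (used to pull the scalar $z^{-(\ell+2)}$ through) yields $\lp^{-1}\calc_\star\lp\eta_z=z^{-(\ell+2)}\lp^{-1}K_{1/\overline z}=z^{-(\ell+2)}\eta_{1/\overline z}=\frac{2^\ell}{z^{\ell+2}\ell!}t^{\ell+1}e^{-t/z}$. Both are direct substitutions; the only things to keep straight are the nested conjugates and the fact that every argument stays in $\hlp$.

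Finally, for the explicit form in the case $\bfa=0$, where $\calc_0=\calj$: the previous step (with $\bfa=0$) gives $\lp^{-1}\calj\lp\eta_z=\frac{2^\ell}{\ell!}t^{\ell+1}e^{-tz}=\overline{\eta_z}$, the last equality because $\frac{2^\ell}{\ell!}t^{\ell+1}$ is real. Thus $\lp^{-1}\calj\lp$ coincides with the anti-linear isometry $\bfh\mapsto\overline{\bfh}$ on every $\eta_z$, hence (both being anti-linear) on their linear span. To upgrade this to an equality of operators I would invoke density: a function in $\spc$ orthogonal to all $K_z$ vanishes identically, so $\{K_z:z\in\hlp\}$ has dense span in $\spc$, and since $\lp^{-1}$ is a surjective isometry (hence a homeomorphism) the span of $\{\eta_z:z\in\hlp\}$ is dense in $\lb$; two continuous anti-linear operators agreeing on a dense subspace agree everywhere. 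This density/continuity step is the only point where a little care is needed---the rest is bookkeeping---so I expect it to be the (mild) crux; alternatively one could verify $\lp^{-1}\calj\lp\bfh=\overline{\bfh}$ by interchanging the conjugation with the Laplace integral.
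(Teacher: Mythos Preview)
Your proposal is correct and follows the same route as the paper: apply Example \ref{exa} to pass from $\eta_z$ to $K_z$, invoke the kernel identities \eqref{eq-CaKz=} and \eqref{eq-CstarKz=}, and apply Example \ref{exa} again in reverse. You in fact supply more than the paper does---the abstract transport-of-conjugation argument, the computation for part (2), and the density step for the explicit $\bfa=0$ form are all left implicit or ``to the reader'' there---so your write-up is a strict elaboration of the paper's proof rather than a different approach.
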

\begin{proof}
	(1) For $z\in\hlp$, we have
	\begin{gather*}
	\lp^{-1}\calc_\bfa\lp\left(\dfrac{2^\ell}{\ell!}t^{\ell+1}e^{-t\overline{z}}\right)
	=\lp^{-1}\calc_\bfa K_z\quad\text{(by Example \ref{exa})}\\
	=\lp^{-1}K_{\overline{z}+i\bfa}\quad\text{(by Lemma \ref{lem-cs-Ca})}\\
	=\dfrac{2^\ell}{\ell!}t^{\ell+1}e^{-t(z-i\bfa)}\quad\text{(by Example \ref{exa})}.
	\end{gather*}
	
	(2) The proof is left to the reader as it is a direct computation.
\end{proof}

The result below shows that a composition operator on Bergman space can be transformed into a weighted composition operator on Lebesgue space via the Laplace transform. Moreover, this transformation preserves the complex symmetry.
\begin{prop}
	Let $f(\cdot)$ and $g(\cdot)$ be the functions in \eqref{form-20210312-II} with \eqref{cond-Cstar-II}. If
	\begin{gather*}
	\psi(t)=\dfrac{\theta}{\lambda^{\ell+2}}e^{-ti\bfc(\lambda|\bfb|^{-\frac{2}{\ell+2}}-1)/\lambda} \quad \text{and}\quad
	\quad\varphi(t)=\dfrac{t}{\lambda}
	\end{gather*}
	then the following assertions hold.
	\begin{enumerate}
		\item $\lp^{-1}\bfe_{f,g}\lp=\bfe_{\psi,\varphi}$.
		\item Furthermore, let $W_{f,g,\max}$ be the maximal weighted composition operator generated by $\bfe_{f,g}$ on Bergman space $\spc$ and $W_{\psi,\varphi,\max}$ be the maximal weighted composition operator generated by $\bfe_{\psi,\varphi}$ on Lebesgue space $\lb$. Then the operator
		\begin{gather*}
		W_{\psi,\varphi,\max}=\lp^{-1}W_{f,g,\max}\lp
		\end{gather*}
		is $\lp^{-1}U_{\bfb,\bfc}\calc_{\star}U_{\bfb,\bfc}^*\lp$-selfadjoint on Lebesgue space $\lb$.
	\end{enumerate}
\end{prop}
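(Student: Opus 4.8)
The plan is to prove part~(1) by a change of variables inside the Laplace integral, and to deduce part~(2) from part~(1) together with Theorem~\ref{thm-UCstarU-1}, using the elementary fact that conjugating a complex symmetric operator by a unitary map produces a complex symmetric operator relative to the conjugated conjugation.

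For part~(1), since $\lp$ is injective on $\lb$ it suffices to check that $\bfe_{f,g}(\lp\bfh)=\lp(\bfe_{\psi,\varphi}\bfh)$ for every $\bfh(\cdot)\in\lb$. Here $f\equiv\theta$, and by \eqref{form-20210312-II} the map $g$ is affine, $g(w)=\lambda w+i\eta$ with $\eta=\bfc\bigl(\lambda|\bfb|^{-2/(\ell+2)}-1\bigr)\in\R$. I would expand $\lp\bfh$ as an integral, substitute this expression for $g$ into $(\lp\bfh)(g(z))$, factor out of the integrand the scalar exponential multiplier produced by the constant part of $g$, and then perform the substitution $s=\lambda t$ (legitimate since $\lambda>0$); the inner variable becomes $s/\lambda=\varphi(s)$ and the accumulated multiplier combines to the weight $\psi(s)$, so that $\bfe_{f,g}(\lp\bfh)$ is exhibited as $\lp$ evaluated at $z$ of $s\mapsto\psi(s)\bfh(\varphi(s))=\bfe_{\psi,\varphi}\bfh(s)$. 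Injectivity of $\lp$ then yields $\lp^{-1}\bfe_{f,g}\lp=\bfe_{\psi,\varphi}$.

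For part~(2), recall from the Paley--Wiener theorem stated above that $\lp:\lb\to\spc$ is a unitary isomorphism, so $\lp^{-1}=\lp^{*}$. Since $f,g$ are of the form \eqref{form-20210312-II}--\eqref{cond-Cstar-II}, assertion~(3) of Theorem~\ref{thm-UCstarU-1} holds, hence so does assertion~(1): $W_{f,g,\max}$ is $U_{\bfb,\bfc}\calc_\star U_{\bfb,\bfc}^{*}$-selfadjoint on $\spc$, and it is bounded. As $\lp$ is unitary and $W_{f,g,\max}$ is bounded with $\text{dom}(W_{f,g,\max})=\spc$, the operator $\lp^{-1}W_{f,g,\max}\lp$ is bounded with domain $\lb$ and, by part~(1), acts as $\bfe_{\psi,\varphi}$; consequently $\bfe_{\psi,\varphi}\bfh\in\lb$ for every $\bfh\in\lb$, and therefore $\lp^{-1}W_{f,g,\max}\lp=W_{\psi,\varphi,\max}$. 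Conjugating the identity $W_{f,g,\max}=\bigl(U_{\bfb,\bfc}\calc_\star U_{\bfb,\bfc}^{*}\bigr)W_{f,g,\max}^{*}\bigl(U_{\bfb,\bfc}\calc_\star U_{\bfb,\bfc}^{*}\bigr)$ by $\lp$, inserting $\lp\lp^{-1}=I$ between the three factors and using $(\lp^{-1}W_{f,g,\max}\lp)^{*}=\lp^{-1}W_{f,g,\max}^{*}\lp$ (valid since $\lp$ is unitary), gives
\begin{gather*}
W_{\psi,\varphi,\max}=\widetilde{\calc}\,W_{\psi,\varphi,\max}^{*}\,\widetilde{\calc},\qquad \widetilde{\calc}:=\lp^{-1}U_{\bfb,\bfc}\calc_\star U_{\bfb,\bfc}^{*}\lp .
\end{gather*}
Here $\widetilde{\calc}$ is a conjugation on $\lb$: it is anti-linear, isometric and involutive, because $U_{\bfb,\bfc}\calc_\star U_{\bfb,\bfc}^{*}$ is a conjugation on $\spc$ by Lemma~\ref{lem-UCU*-conj} and $\lp$ is a unitary isomorphism. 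This proves that $W_{\psi,\varphi,\max}=\lp^{-1}W_{f,g,\max}\lp$ is $\lp^{-1}U_{\bfb,\bfc}\calc_\star U_{\bfb,\bfc}^{*}\lp$-selfadjoint on $\lb$. The bookkeeping in part~(1) is routine; the one place to be careful is the identification $\lp^{-1}W_{f,g,\max}\lp=W_{\psi,\varphi,\max}$ of maximal operators, which here is immediate because $W_{f,g,\max}$ is already bounded, but which in general requires part~(1) and the isometry of $\lp$ to show that $\lp$ carries the maximal Bergman-space domain onto the maximal Lebesgue-space domain.
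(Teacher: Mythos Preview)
Your approach to part~(1) differs from the paper's. The paper verifies the identity only on the family $t\mapsto\tfrac{2^\ell}{\ell!}t^{\ell+1}e^{-t\overline{z}}$ of Laplace preimages of the reproducing kernels $K_z$, relying implicitly on their density in $\lb$; you instead compute $\bfe_{f,g}(\lp\bfh)$ for arbitrary $\bfh\in\lb$ by the substitution $s=\lambda t$ in the Laplace integral. Both routes are legitimate: yours is more elementary and self-contained, while the paper's stays on the reproducing-kernel side where the action of $\bfe_{f,g}$ on $K_z$ is already explicit from earlier sections.

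One caution: actually carrying out your substitution gives
\[
\theta\,(\lp\bfh)(\lambda z+i\eta)=\int_0^\infty\frac{\theta}{\lambda}\,e^{-i\eta s/\lambda}\,\bfh\!\left(\frac{s}{\lambda}\right)e^{-zs}\,ds,
\]
so the weight that emerges is $\tfrac{\theta}{\lambda}e^{-i\eta s/\lambda}$, not $\tfrac{\theta}{\lambda^{\ell+2}}e^{-i\eta s/\lambda}$ as in the stated $\psi$. The paper's computation stops at $\lp^{-1}\bfe_{f,g}K_z=\tfrac{\theta}{\lambda^{\ell+2}}e^{-sd/\lambda}\tfrac{2^\ell}{\ell!}s^{\ell+1}e^{-s\overline{z}/\lambda}$ and never explicitly evaluates $\bfe_{\psi,\varphi}(\lp^{-1}K_z)$; if one does, an extra factor $\lambda^{-(\ell+1)}$ from $(s/\lambda)^{\ell+1}$ appears, and the two sides match only with prefactor $\theta/\lambda$. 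Your method is therefore sound and in fact exposes an apparent misprint in the exponent of $\lambda$ in the statement; you should flag this rather than simply assert that the multiplier ``combines to the weight $\psi(s)$.''

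Your treatment of part~(2) is correct and considerably more explicit than the paper's one-line remark that it follows directly from part~(1); the underlying mechanism---transfer of the $U_{\bfb,\bfc}\calc_\star U_{\bfb,\bfc}^*$-selfadjointness supplied by Theorem~\ref{thm-UCstarU-1} through the unitary $\lp$---is the same.
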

\begin{proof}
	Note that the second conclusion follows directly from the first. Now we prove the first conclusion as follows. Denote $d=i\bfc(\lambda|\bfb|^{-\frac{2}{\ell+2}}-1)$. For $z\in\hlp$, we have
	\begin{gather*}
	\lp^{-1}\bfe_{f,g}\lp\left(\dfrac{2^\ell}{\ell!}t^{\ell+1}e^{-t\overline{z}}\right)
	=\lp^{-1}\bfe_{f,g}K_z\quad\text{(by Example \ref{exa})}\\
	=\dfrac{\theta}{\lambda^{\ell+2}}\lp^{-1}K_{(z-d)/\lambda}
	=\dfrac{\theta}{\lambda^{\ell+2}}e^{-td/\lambda}
	\dfrac{2^\ell}{\ell!}t^{\ell+1}e^{-t\overline{z}/\lambda}\quad\text{(by Example \ref{exa})}.
	\end{gather*}
\end{proof}

\section*{Acknowledgements}
%The paper was completed during a scientific stay of P.V. Hai at the Vietnam Institute for Advanced Study in Mathematics (VIASM), whose hospitality is gratefully appreciated. This research has been done under the research project QG.21.02 "Some problems in operator theory and complex analysis" of Vietnam National University, Hanoi.

O.R. Severiano is postdoctoral fellowship at Programa Associado de P\'{o}s Gradua\c{c}\~{a}o em Matem\'{a}tica  UFPB/UFCG, and is supported by INCTMat Grant 88887.613486/2021-00.

\section*{Data Statement}
The research does not include any data.

%\nocite{*}
\bibliographystyle{plain}
\bibliography{refs}

\begin{thebibliography}{10}

\bibitem{zbMATH05687699}
Paul~S. {Bourdon} and Sivaram~K. {Narayan}.
\newblock {Normal weighted composition operators on the Hardy space}.
\newblock {\em {J. Math. Anal. Appl.}}, 367(1):278--286, 2010.

\bibitem{zbMATH04000348}
J.~A. {Cima} and W.~R. {Wogen}.
\newblock {Unbounded composition operators on \(H^ 2(B_ 2)\)}.
\newblock {\em {Proc. Am. Math. Soc.}}, 99:477--483, 1987.

\bibitem{zbMATH06479106}
Carl~C. {Cowen}, Sungeun {Jung}, and Eungil {Ko}.
\newblock {Normal and cohyponormal weighted composition operators on \(H^2\)}.
\newblock In {\em {Operator theory in harmonic and non-commutative analysis.
  23rd international workshop in operator theory and its applications, IWOTA,
  Sydney, Australia, July 16--20, 2012}}, pages 69--85. Cham:
  Birkh\"auser/Springer, 2014.

\bibitem{zbMATH05815825}
Carl~C. {Cowen} and Eungil {Ko}.
\newblock {Hermitian weighted composition operators on \(H^{2}\)}.
\newblock {\em {Trans. Am. Math. Soc.}}, 362(11):5771--5801, 2010.

\bibitem{zbMATH00918588}
Carl~C. {Cowen} and Barbara~D. {MacCluer}.
\newblock {\em {Composition operators on spaces of analytic functions.}}
\newblock Boca Raton, FL: CRC Press, 1995.

\bibitem{zbMATH03158247}
Karel {de Leeuw}, Walter {Rudin}, and John {Wermer}.
\newblock {The isometries of some function spaces}.
\newblock {\em {Proc. Am. Math. Soc.}}, 11:694--698, 1960.

\bibitem{zbMATH05907249}
Sam~J. {Elliott} and Andrew {Wynn}.
\newblock {Composition operators on weighted Bergman spaces of a half-plane}.
\newblock {\em {Proc. Edinb. Math. Soc., II. Ser.}}, 54(2):373--379, 2011.

\bibitem{zbMATH00432614}
Gerald~B. {Folland}.
\newblock {\em {Fourier analysis and its applications}}.
\newblock Pacific Grove, CA: Wadsworth \& Brooks/Cole Advanced Books \&
  Software, 1992.

\bibitem{zbMATH03213975}
F.~{Forelli}.
\newblock {The isometries of \(H^ p\)}.
\newblock {\em {Can. J. Math.}}, 16:721--728, 1964.

\bibitem{zbMATH06454968}
Stephan~Ramon {Garcia} and Christopher {Hammond}.
\newblock {Which weighted composition operators are complex symmetric?}
\newblock In {\em {Concrete operators, spectral theory, operators in harmonic
  analysis and approximation. 22nd international workshop in operator theory
  and its applications, IWOTA 11, Sevilla, Spain, July 3--9, 2011}}, pages
  171--179. Basel: Birkh\"auser/Springer, 2014.

\bibitem{zbMATH06349831}
Stephan~Ramon {Garcia}, Emil {Prodan}, and Mihai {Putinar}.
\newblock {Mathematical and physical aspects of complex symmetric operators}.
\newblock {\em {J. Phys. A, Math. Theor.}}, 47(35):54, 2014.
\newblock Id/No 353001.

\bibitem{zbMATH02237890}
Stephan~Ramon {Garcia} and Mihai {Putinar}.
\newblock {Complex symmetric operators and applications}.
\newblock {\em {Trans. Am. Math. Soc.}}, 358(3):1285--1315, 2006.

\bibitem{zbMATH05148120}
Stephan~Ramon {Garcia} and Mihai {Putinar}.
\newblock {Complex symmetric operators and applications. II}.
\newblock {\em {Trans. Am. Math. Soc.}}, 359(8):3913--3931, 2007.

\bibitem{zbMATH06803895}
Anuradha {Gupta} and Pooja {Sharma}.
\newblock {Isometric composition operators on the Fock-spaces}.
\newblock {\em {Oper. Matrices}}, 11(2):587--592, 2017.

\bibitem{zbMATH07216720}
Pham~Viet {Hai}.
\newblock {Unbounded weighted composition operators on Fock space}.
\newblock {\em {Potential Anal.}}, 53(1):1--21, 2020.

\bibitem{zbMATH07334460}
Kai~Kai {Han} and Mao~Fa {Wang}.
\newblock {Complex symmetric \(C_0\)-semigroups on \(A^2(\mathbb{C}_+)\)}.
\newblock {\em {Acta Math. Sin., Engl. Ser.}}, 36(10):1171--1182, 2020.

\bibitem{zbMATH07190521}
Kaikai {Han}.
\newblock {Complex symmetric composition operators on the Newton space}.
\newblock {\em {J. Math. Anal. Appl.}}, 488(2):13, 2020.
\newblock Id/No 124091.

\bibitem{zbMATH06882577}
M.~R. {Jabbarzadeh} and M.~{Moradi}.
\newblock {Complex symmetric weighted composition Lambert type operators on
  \(L^2(\Sigma)\)}.
\newblock {\em {Oper. Matrices}}, 12(1):271--285, 2018.

\bibitem{zbMATH06320823}
Sungeun {Jung}, Yoenha {Kim}, Eungil {Ko}, and Ji~Eun {Lee}.
\newblock {Complex symmetric weighted composition operators on
  \(H^2(\mathbb{D})\)}.
\newblock {\em {J. Funct. Anal.}}, 267(2):323--351, 2014.

\bibitem{zbMATH06074411}
Trieu {Le}.
\newblock {Self-adjoint, unitary, and normal weighted composition operators in
  several variables}.
\newblock {\em {J. Math. Anal. Appl.}}, 395(2):596--607, 2012.

\bibitem{zbMATH06324457}
Trieu {Le}.
\newblock {Normal and isometric weighted composition operators on the Fock
  space}.
\newblock {\em {Bull. Lond. Math. Soc.}}, 46(4):847--856, 2014.

\bibitem{zbMATH07186914}
S.~Waleed {Noor} and Osmar~R. {Severiano}.
\newblock {Complex symmetry and cyclicity of composition operators on
  \(H^2(\mathbb{C}_+)\).}
\newblock {\em {Proc. Am. Math. Soc.}}, 148(6):2469--2476, 2020.

\bibitem{zbMATH06487337}
{Pham Viet Hai} and {Le Hai Khoi}.
\newblock {Complex symmetry of weighted composition operators on the Fock
  space}.
\newblock {\em {J. Math. Anal. Appl.}}, 433(2):1757--1771, 2016.

\bibitem{KS}
K.~Schm\"udgen.
\newblock {\em Unbounded self-adjoint operators on {H}ilbert space}.
\newblock Graduate Texts in Mathematics, 265. Springer, Dordrecht, 2012.

\bibitem{zbMATH00473375}
Joel~H. {Shapiro}.
\newblock {\em {Composition operators and classical function theory.}}
\newblock New York: Springer-Verlag, 1993.

\bibitem{zbMATH00467266}
R.~K. {Singh} and J.~S. {Manhas}.
\newblock {\em {Composition operators on function spaces.}}, volume 179.
\newblock Amsterdam: North-Holland, 1993.

\bibitem{zbMATH06561994}
Maofa {Wang} and Xingxing {Yao}.
\newblock {Complex symmetry of weighted composition operators in several
  variables}.
\newblock {\em {Int. J. Math.}}, 27(2):14, 2016.
\newblock Id/No 1650017.

\end{thebibliography}
\end{document}